\tikzset{every node/.style={circle,inner sep=0pt,minimum size=3.5pt,fill=black,draw}}
\newcommand{\mathbold}[1]{\mathbb{#1}}
\renewcommand{\emptyset}{\varnothing}
\newcommand{\NN}{\mathbold{N}}
\newcommand{\cF}{\mathcal{F}}
\newcommand{\cX}{\mathcal{X}}
\newcommand{\att}{\hookrightarrow}
\newcommand{\p}{\mathbf p}
\newcommand{\Sym}{\mathrm{Sym}}
\renewcommand{\Pr}{\mathbb{P}}
\declaretheorem{theorem}
\declaretheorem[sibling=theorem]{lemma}
\declaretheorem[sibling=theorem]{corollary}
\declaretheorem[sibling=theorem]{proposition}
\declaretheorem[style=definition,sibling=theorem]{definition}
\declaretheorem[style=definition,sibling=theorem]{example}
\DeclareMathOperator{\e}{\mathbb{E}}
\DeclareMathOperator{\cut}{Cut}
\DeclareMathOperator{\Aut}{Aut}
\newcommand{\eps}{\varepsilon}
\newcommand{\ol}[1]{\overline{#1}}
\newcommand{\ul}[1]{\underline{#1}}
\newcommand{\mytriangle}[3]{
  \begin{scope}[shift={#1}, rotate=#2]
    \coordinate (a) at (0,0);
    \coordinate (b) at (1,0);
    \coordinate (c) at (0.5,0.866025);
    \draw[fill opacity=0.3,#3] (a) -- (b) -- (c) -- cycle;
    \node at (a) {};
    \node at (b) {};
    \node at (c) {};
  \end{scope}
}
\newcommand{\myquad}[3]{
  \begin{scope}[shift={#1}, rotate=#2]
    \coordinate (a) at (0,0);
    \coordinate (b) at (1,0);
    \coordinate (c) at (1,1);
    \coordinate (d) at (0,1);
    \draw[fill opacity=0.3,#3] (a) -- (b) -- (c) -- (d) -- cycle;
    \node at (a) {};
    \node at (b) {};
    \node at (c) {};
    \node at (d) {};
  \end{scope}
}
\begin{document}

  \title{On the probability of nonexistence in binomial subsets}
\author{Frank Mousset}
\address{Frank Mousset, School of Mathematical Sciences, Tel Aviv University, Tel Aviv 6997801, Israel}
\email{moussetfrank@gmail.com}

\author{Andreas Noever}
\address{Andreas Noever, Department of Computer Science, ETH Zurich, 8092 Zurich, Switzerland}
\email{andreas.noever@gmail.com }

\author{Konstantinos Panagiotou}
\address
{Konstantinos Panagiotou, Institute of Mathematics,
    University of Munich,
    D-80333 Munich, Germany}
\email{kpanagio@math.lmu.de}

\author{Wojciech Samotij}
\address{Wojciech Samotij, School of Mathematical Sciences, Tel Aviv University, Tel Aviv 6997801, Israel}
\email{samotij@tauex.tau.ac.il}

\thanks{
  Research supported in part by the Israel Science Foundation grants 1147/14
  (FM, WS), 1028/16 (FM) and 1147/14 (FM), the European Research Council (ERC)
  under the European Union’s Horizon 2020 research and innovation program
  (grant agreement no. 772606) (KP), and the ERC Starting Grant 633509 (FM).
}
\date{\today}

  \begin{abstract}
    Given a hypergraph $\Gamma=(\Omega,\cX)$ and a sequence $\p
    = (p_\omega)_{\omega\in \Omega}$ of values in $(0,1)$,
    let $\Omega_\p$ be the random
    subset of $\Omega$ obtained by keeping every vertex $\omega$ independently
    with probability $p_\omega$. We investigate the general question of
    deriving fine (asymptotic) estimates for the probability that $\Omega_\p$
    is an independent set in $\Gamma$, which is an omnipresent problem in
    probabilistic combinatorics. Our main result provides a sequence of upper
    and lower bounds on this probability, each of which can be evaluated
    explicitly in terms of the joint cumulants of small sets of edge indicator
    random variables. Under certain natural conditions, these upper and lower
    bounds coincide asymptotically, thus giving the precise asymptotics of the
    probability in question. We demonstrate the applicability of our results
    with two concrete examples: subgraph containment in random (hyper)graphs
    and arithmetic progressions in random subsets of the integers.
  \end{abstract}

  \maketitle

\section{Introduction}

Let $\Gamma = (\Omega,\cX)$ be a hypergraph and,
given a sequence $\p = (p_\omega)_{\omega\in \Omega}\in (0,1)^\Omega$,
let $\Omega_\p$ be a random subset of $\Omega$ formed by including every $\omega\in\Omega$
independently with probability $p_\omega$. 
What is the probability that
$\Omega_\p$ is an independent set in $\Gamma$? This very general question arises
in many different settings.

\begin{example}
  \label{j:ex:F-free} Let $F$ be a graph and let $n$ be a positive integer. 
  Define $\Omega$ as the edge set $E(K_n) = \binom{[n]}{2}$ of the complete graph
  with vertex set $[n]=\{1, \dotsc, n\}$ and let $\cX$ be the collection of
  the edge sets of all copies of $F$ in~$K_n$. Fix some $p \in (0,1)$ and define
  $\p$ by setting $p_\omega=p$ for every $\omega\in\Omega$. Then we are asking
  for the probability that the Erd\H{o}s--Rényi random graph $G_{n,p}$ is $F$-free, that
  is, does not contain $F$ as a (not necessarily induced) subgraph.
\end{example}

\begin{example}
  \label{j:ex:r-AP}
  An arithmetic progression of length $r\in \NN$ (an \emph{$r$-AP} for short)
  is a subset of the integers of the form $\{a+kb : k\in [r]\}$, where $b\neq
  0$. Let $\Omega = [n]$ and let $\cX$ be the set of all $r$-APs in $[n]$.
  Given $p \in (0,1)$, we define $\p$ by setting $p_\omega=p$ for every
  $\omega\in\Omega$. 
   Then we are asking for the
   probability that the $p$-random subset $[n]_p$ of $[n]$ is $r$-AP-free.
\end{example}

\begin{example}
  \label{j:ex:gen-pos}
  Let $\Omega$ be a finite set of points in the plane. Include a triple
  $\{i,j,k\}$ in $\cX$ if the points $i,j,k$ lie on a common line. Now we are
  asking for the probability that the random subset $\Omega_\p$ of points
  is in general position.
\end{example}

It is not hard to find other natural examples that provide further motivation
for studying this question. It is convenient to introduce some
notation. Given
$\Gamma=(\Omega,\cX)$ and $\p\in (0,1)^\Omega$, we shall fix an (abritrary) ordering of the
elements of $\cX$ as $\gamma_1,\dotsc,\gamma_N$.
We then let $X_i$ denote the indicator variable of the event that
$\gamma_i\subseteq \Omega_\p$ and set $X = X_1 + \dotsb + X_N$. Thus, $X$
counts the number of edges of $\Gamma$ that are fully contained in $\Omega_\p$ and our goal
is to compute the probability that $X=0$. Of course, these notations all depend
on the given pair $(\Gamma,\p)$, but we shall always suppress this
dependence as it will be clear from the context.

%
Most of the time, we will be interested in the case where $\Gamma = \Gamma(n)$
and $\p=\p(n)$ (and hence also $X = X(n)$) depend on some parameter $n$
that tends to infinity and ask:
\begin{center}
  \textit{What are the asymptotics of the probability $\Pr[X=0]$ as
  $n\to\infty$?}
\end{center}
The above question can also be viewed as a computational
problem: we want to derive closed formulas that are asymptotic to $\Pr[X=0]$,
at least for various ranges of the density parameter $\p$.

For technical convenience, we shall exclude the border case
where $p_\omega\in \{0,1\}$ for some $\omega$. That case can always be
addressed by changing $\Gamma$ or by a continuity argument.

%


\subsection{The Harris and Janson inequalities}

The main reason why computing $\Pr[X=0]$ is challenging is that the random
variables $X_1,\dotsc,X_N$ are usually not independent. However, this is not
to say that there is no structure at all: each random variable $X_i$ is a
non-decreasing function on the product space $\{0,1\}^\Omega$. An important
inequality that applies in this case is the \emph{Harris inequality}:

\begin{theorem}[Harris inequality~\cite{Ha60}]
  Let $\Omega$ be a finite set and let $X$ and $Y$ be random variables defined
  on a product probability space  over $\{0,1\}^\Omega$. If $X$ and $Y$ are
  both non-decreasing (or non-increasing), then
  \[ \e[XY] \geq \e[X] \e[Y]. \]
  If $X$ is non-decreasing and $Y$ is non-increasing, then
  \[ \e[XY] \leq \e[X] \e[Y]. \]
\end{theorem}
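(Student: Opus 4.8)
The plan is to argue by induction on $|\Omega|$. First I would reduce to the case where both $X$ and $Y$ are non-decreasing: replacing $X$ by $-X$ (resp.\ $Y$ by $-Y$) turns a non-decreasing function into a non-increasing one while flipping the sign of the quantity $\e[XY] - \e[X]\e[Y]$, so all four combinations of monotonicity follow from this single case.

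For the base case $|\Omega| = 1$, write $\Omega = \{\omega\}$ and let $p$ be the probability that the single coordinate equals $1$. If $x_0 \le x_1$ and $y_0 \le y_1$ denote the values taken by $X$ and $Y$ at $0$ and $1$ respectively, then expanding both sides gives
\[
  \e[XY] - \e[X]\,\e[Y] = p(1-p)(x_1 - x_0)(y_1 - y_0) \ge 0 .
\]
This is the only step where the monotonicity hypothesis is genuinely used.

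For the inductive step with $|\Omega| \ge 2$, I would single out a coordinate $\omega \in \Omega$, set $\Omega' = \Omega \setminus \{\omega\}$, and condition on the value $t \in \{0,1\}$ of the coordinate indexed by $\omega$. Let $f(t) = \e[X \mid \text{coordinate } \omega = t]$ and $g(t) = \e[Y \mid \text{coordinate } \omega = t]$. For each fixed $t$, the restrictions of $X$ and $Y$ are non-decreasing functions on the smaller product space $\{0,1\}^{\Omega'}$, so the induction hypothesis yields $\e[XY \mid \text{coordinate } \omega = t] \ge f(t)g(t)$. Moreover $f$ and $g$ are non-decreasing in $t$, because $X$ and $Y$ are non-decreasing in the $\omega$-coordinate. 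Hence, by the law of total expectation and the base case applied to $f$ and $g$,
\[
  \e[XY] = \e\big[\e[XY \mid \text{coordinate } \omega]\big] \ge \e[f\,g] \ge \e[f]\,\e[g] = \e[X]\,\e[Y] .
\]

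I expect the main obstacle — such as it is — to be purely organizational: one must verify that the conditional functions $f,g$ and the conditioned restrictions of $X,Y$ inherit the right monotonicity (the latter in the $\Omega'$-coordinates, so that the induction hypothesis applies; the former in the single remaining coordinate, so that the base case applies). There is no analytic difficulty here; the entire content of the inequality is concentrated in the elementary sign computation of the base case, and the induction simply propagates it coordinate by coordinate.
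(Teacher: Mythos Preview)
Your argument is correct and is the standard inductive proof of Harris's inequality. Note, however, that the paper does not actually supply a proof of this theorem: it is stated with a citation to Harris~\cite{Ha60} and then used as a black box throughout, so there is no ``paper's own proof'' to compare against. Your write-up would serve perfectly well as a self-contained proof if one were desired.
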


In our setting, for every $V\subseteq [N]$, the random variable
$\prod_{i \in V} (1-X_i)$ is non-increasing, so we easily deduce
from Harris's inequality that
\begin{equation}
  \label{j:eq:harris}
  \Pr[X=0] = \e\left[\prod_{i\in [N]} (1-X_i)\right] \geq
  \prod_{i\in [N]} (1- \e[X_i]).
\end{equation}
Note that \eqref{j:eq:harris} would be true with equality if 
$X_1,\dotsc,X_N$ were independent. An upper bound on
$\Pr[X=0]$ is given by \emph{Janson's inequality}, which states that
the reverse of~\eqref{j:eq:harris} holds up to a multiplicative error term that
is an explicit function of the pairwise dependencies between the 
indicator random variables $X_1,\dotsc,X_N$. Formally, we write $i\sim
j$ if $i\neq j$ and
$\gamma_i\cap \gamma_j \neq \emptyset$, and define the sum of joint moments
\begin{equation}\label{eq:delta2}
  \Delta_2 = \sum_{\substack{i\sim j}} \e[X_iX_j].
\end{equation}

\begin{theorem}[Janson's inequality~{\cite{AlSp16, JaLuRu00}}]
  \label{thm:janson}
  For all $\Gamma$ and $\p$ as above,
  \[ \Pr[X = 0] \leq \exp\big(-\e[X] + \Delta_2\big). \]
\end{theorem}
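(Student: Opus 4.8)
The plan is to prove the bound $\Pr[X=0] \le \exp(-\e[X] + \Delta_2)$ by expanding the product $\prod_{i \in [N]} (1-X_i)$ one factor at a time and controlling the error introduced at each step via a conditional version of Harris's inequality. Concretely, I would fix the ordering $\gamma_1, \dots, \gamma_N$ and write $\Pr[X=0] = \prod_{i=1}^{N} \Pr[X_i = 0 \mid X_1 = \dots = X_{i-1} = 0]$, so that it suffices to show
\[
  \Pr[X_i = 0 \mid X_1 = \dots = X_{i-1} = 0] \le \exp\Big(-\e[X_i] + \sum_{j < i,\ j \sim i} \e[X_i X_j]\Big),
\]
since multiplying these bounds over all $i$ and using $\e[X] = \sum_i \e[X_i]$ and $\Delta_2 = \sum_{i \sim j} \e[X_i X_j] = \sum_i \sum_{j<i,\ j\sim i} \e[X_i X_j] \cdot$ (after accounting for the fact that each unordered pair is counted once on each side, one should be slightly careful: with the definition \eqref{eq:delta2} summing over ordered pairs, each pair $i \sim j$ contributes $\e[X_iX_j]$ twice, so the telescoping sum of $\sum_{j<i,\,j\sim i}\e[X_iX_j]$ equals $\Delta_2$ exactly). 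The key inequality to establish for a single factor is the lower bound
\[
  \Pr[X_i = 0 \mid X_1 = \dots = X_{i-1} = 0] \ge \Pr[X_i = 0] - \sum_{j < i,\ j \sim i} \e[X_i X_j],
\]
after which one applies $1 - x \le e^{-x}$ together with $\e[X_i] = \Pr[X_i = 1]$.

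To prove that single-factor bound, I would split the conditioning variables into those that interact with $\gamma_i$ and those that do not. Let $D = \{j < i : j \sim i\}$ and let $R = \{j < i : j \not\sim i\}$, and write $Y = \prod_{j \in D}(1 - X_j)$ and $Z = \prod_{j \in R}(1 - X_j)$. The crucial structural point is that $X_i$ depends only on the coordinates of $\{0,1\}^\Omega$ indexed by $\gamma_i$, while $Z$ depends only on coordinates in $\bigcup_{j \in R} \gamma_j$, which is \emph{disjoint} from $\gamma_i$; hence $X_i$ and $Z$ are independent, and more usefully, $X_i$ is conditionally independent of $Z$ given $Y$ — in fact, conditioned on the coordinates in $\gamma_i$ (which determine $X_i$), the variables $Y$ and $Z$ remain monotone functions of the remaining coordinates. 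From this I would derive $\e[X_i Y Z] \le \e[X_i Y]\, \e[Z \mid \text{$\gamma_i$-coordinates}]$-type estimates; more cleanly, since $X_i Z$ and $Y$ are still non-increasing in the coordinates outside $\gamma_i$ once we fix the $\gamma_i$-coordinates (and $X_i$ is constant there), a conditional application of Harris gives $\e[(1-X_i) Y Z] \ge \e[(1-X_i)Y]\,\e[YZ]/\e[Y]$ — but rather than chase the exact algebra I would instead use the cleaner route: $\Pr[X_i=0 \mid \bigcap_{j<i}\{X_j=0\}] = \e[(1-X_i)YZ]/\e[YZ]$, and bound the numerator from below by replacing $(1-X_i)$ with $1$ and subtracting the correction $\e[X_i Y Z]$, then bound $\e[X_i Y Z] \le \e[X_i Z]$ and $\e[X_i Z] = \e[X_i]\e[Z]$ by independence, while bounding $\e[YZ] \ge \e[Y]\e[Z] \ge$ (something) — this does not immediately close, so the honest approach is the conditional-Harris one: fix the $\gamma_i$-coordinates, note $Y$ and $Z$ are non-increasing in what remains, apply Harris conditionally to get $\e[YZ \mid \gamma_i] \ge \e[Y \mid \gamma_i]\e[Z\mid\gamma_i] = \e[Y\mid\gamma_i]\e[Z]$, and similarly handle the $X_i=1$ contribution.

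The cleanest packaging, which I would ultimately adopt, is: condition on the event $A = \bigcap_{j \in R}\{X_j = 0\}$ first (this only rescales the measure on the $\gamma_i$-free coordinates and leaves $X_i$ and $Y$ jointly distributed as before, by independence of the underlying coordinates), reducing to the case $R = \emptyset$; then in that reduced setting prove directly that
\[
  \Pr\Big[X_i = 0 \ \Big|\ \bigcap_{j \in D}\{X_j = 0\}\Big] \ge \Pr[X_i = 0] - \sum_{j \in D} \Pr[X_i = 1,\ X_j = 1]
\]
by noting that $\Pr[X_i = 0 \mid \bigcap_{j\in D}\{X_j=0\}] \ge \Pr[X_i = 0,\ \bigcap_{j\in D}\{X_j=0\}] / \Pr[\bigcap_{j \in D}\{X_j=0\}]$, that the denominator is at most $1$, and that
\[
  \Pr\Big[X_i = 0,\ \bigcap_{j\in D}\{X_j=0\}\Big] \ge \Pr\Big[\bigcap_{j\in D}\{X_j=0\}\Big] - \Pr[X_i = 1] \ \text{is too weak},
\]
so instead one writes $\Pr[X_i = 0 \mid \bigcap_D\{X_j=0\}] = 1 - \Pr[X_i = 1 \mid \bigcap_D\{X_j=0\}]$ and bounds $\Pr[X_i=1\mid \bigcap_D\{X_j=0\}]$ from above: by Harris (the event $\{X_i = 1\}$ is increasing and $\bigcap_D\{X_j=0\}$ is decreasing) we get $\Pr[X_i=1 \mid \bigcap_D\{X_j=0\}] \le \Pr[X_i=1]$, and then a more refined inclusion–exclusion step — comparing $\Pr[X_i = 1 \mid \bigcap_D\{X_j=0\}]$ with $\Pr[X_i=1]$ and absorbing the defect into $\sum_{j\in D}\e[X_iX_j]$ — yields the claim. \textbf{The main obstacle} is exactly this last refinement: the naive Harris bound $\Pr[X_i=1 \mid \cdots] \le \Pr[X_i=1]$ gives $\Pr[X_i = 0 \mid \cdots] \ge 1 - \e[X_i] \ge e^{-\e[X_i]/(1-\e[X_i])}$ which loses too much, so one genuinely needs to argue that $\Pr[X_i = 1] - \Pr[X_i = 1 \mid \bigcap_D\{X_j=0\}] \le \sum_{j \in D}\Pr[X_i=1, X_j=1]$, which I would establish by a union-bound-style argument over the "first reason" the conditioning could fail, combined with the FKG/Harris inequality applied within the conditioned space — this is the technical heart of the proof and the place where the $\sim$ relation and the disjointness structure are really used.
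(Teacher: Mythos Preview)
The paper does not prove Theorem~\ref{thm:janson}; it is quoted from the references and used as a black box. So there is no in-paper proof to compare against. Your approach is the Boppana--Spencer argument that the paper itself mentions in its related-work discussion, so you are on the standard track.

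That said, the write-up has a genuine gap: you repeatedly have the inequalities pointing the wrong way. You state that the ``key inequality'' is the \emph{lower} bound $\Pr[X_i=0\mid \bigcap_{j<i}\{X_j=0\}]\ge \Pr[X_i=0]-\sum_{j\in D}\e[X_iX_j]$, and later that you need to bound $\Pr[X_i=1\mid\bigcap_D\{X_j=0\}]$ \emph{from above}. Both are backwards for the upper bound on $\Pr[X=0]$: you need $\Pr[X_i=0\mid\cdots]\le 1-\e[X_i]+\sum_{j\in D}\e[X_iX_j]$, i.e., a \emph{lower} bound on $\Pr[X_i=1\mid\cdots]$. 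Your final paragraph does eventually land on the right target inequality $\Pr[X_i=1]-\Pr[X_i=1\mid\cdots]\le \sum_{j\in D}\e[X_iX_j]$, but the argument sketched (``first reason the conditioning could fail, combined with FKG/Harris'') is too vague to count as a proof.

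The missing clean step is this. With $E_D=\bigcap_{j\in D}\{X_j=0\}$ and $E_R=\bigcap_{j\in R}\{X_j=0\}$, first drop the denominator: $\Pr[X_i=1\mid E_D\cap E_R]\ge \Pr[\{X_i=1\}\cap E_D\mid E_R]$. Now condition on the coordinates in $\gamma_i$; on that event the remaining coordinates still carry a product measure, and $E_D,E_R$ are both decreasing there, so Harris gives $\Pr[E_D\cap E_R\mid X_i=1]\ge \Pr[E_D\mid X_i=1]\Pr[E_R\mid X_i=1]=\Pr[E_D\mid X_i=1]\Pr[E_R]$, the last equality by independence of $X_i$ and $E_R$. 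Hence $\Pr[\{X_i=1\}\cap E_D\mid E_R]\ge \Pr[\{X_i=1\}\cap E_D]\ge \e[X_i]-\sum_{j\in D}\e[X_iX_j]$ by a union bound. Then $1-x\le e^{-x}$ finishes the factor. This is exactly the ``technical heart'' you flagged, and it is a two-line computation once the conditioning is set up correctly --- there is no need for the several false starts in the proposal.
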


To compare this with~\eqref{j:eq:harris}, we will now assume
that the individual probabilities of $X_i=1$ are not too large, say
$\e[X_i] \leq 1-\eps$ for some positive constant $\eps$. In this case, we may use the
inequality $1 - x \geq 
\exp(- x- x^2/\eps)$ for $x\in
[0, 1-\eps]$ to obtain from~\eqref{j:eq:harris}
\begin{equation}
  \Pr[X=0] 
  \geq \prod_{i\in [N]} (1- \e[X_i])
  \geq \exp(-\e[X] - \delta_1/\eps),
\end{equation}
where
\begin{equation}
\label{eq:delta_1}
  \delta_1 = \sum_{i\in [N]} \e[X_i]^2.
\end{equation}
Combining this lower bound with the upper bound given by Janson's inequality, we get the
approximation
\begin{equation}\label{j:eq:combi}
  \Pr[X=0] = 
  \exp\big(-\e[X] + O(\delta_1+\Delta_2)\big).
\end{equation}

If $\delta_1+\Delta_2=o(1)$, then
\eqref{j:eq:combi} gives the correct
asymptotics of $\Pr[X=0]$. The condition $\Delta_2
=o(1)$ in particular requires that
the pairwise correlations between the indicator variables
$X_i$ vanish asymptotically in a well-defined sense. This
rather strict requirement is not satisfied in many natural settings, including
the ones
presented in Examples~\ref{j:ex:F-free}--\ref{j:ex:gen-pos} for certain
choices of $p$. It is therefore an important
question to obtain better approximations of $\Pr[X = 0]$ in cases
when the pairwise dependencies among the $X_i$ are not negligible.
 This is the starting point
of our investigations.

\subsection{Triangles in random graphs}

Even though our results will be phrased in the general framework introduced
above and are thus widely applicable, we believe that it is useful to keep in
mind the following well-studied instance of the problem that will serve as a
guiding example.

\begin{example}\label{j:ex:tria}
  Assume $p=p(n)\in (0,1)$ and 
  let $X=X(n)$ denote the number of triangles in $G_{n,p}$, as in
  Example~\ref{j:ex:F-free} with $F=K_3$.
  Since each triangle has three edges, we have $\e[X_i] = p^3$ for all
  $i$. Thus $\e[X] = \binom{n}{3}p^3$ and $\delta_1 = O(n^3p^6)$.
  Moreover, we have $\Delta_2 = O(n^4p^5)$, because if two distinct triangles intersect,
  then their union is the graph with $4$ vertices and $5$ edges.
  Thus~\eqref{j:eq:combi} implies that as long as $p=o(n^{-4/5})$, we have
  \[
    \Pr[X=0] = \exp\big(-n^3p^3/6 + o(1)\big).
  \]
  Extending this result,
  Wormald~\cite{Wo96} and
  later Stark and Wormald~\cite{StWo} obtained asymptotic expressions for
  $\Pr[X=0]$ even when $p=\Omega(n^{-4/5})$ and
  thus~\eqref{j:eq:combi} no longer gives an asymptotic bound. In particular,
  it was shown by Stark and Wormald in~\cite{StWo} that if $p=o(n^{-7/11})$,
  then
  \[ \Pr[X=0] = \exp\Big(- \frac{n^3p^3}{6} +
  \frac{n^4p^5}{4} - \frac{7n^5p^7}{12} + \frac{n^2p^3}{2} - \frac{3n^4p^6}{8}
  + \frac{27n^6p^9}{16}+o(1)\Big). \]
  One goal of this paper is to give a simple interpretation of the individual
  terms in this formula. Indeed, we will formulate a general result from which
  the above formula may be obtained by a few short calculations. More
  precisely, we will prove a generalisation of~\eqref{j:eq:combi} that takes
  into account the $k$-wise dependencies between the variables $X_i$
  for all $k \geq 2$.
\end{example}

\subsection{Joint cumulants, clusters, dependency graphs}

Let $A = \{Z_1,\dotsc,Z_m\}$ be a finite set of real-valued random variables. The \emph{joint moment} of the
variables in $A$ is
\begin{equation}
  \Delta(A) = \e[Z_1 \dotsm Z_m].
\end{equation}
The \emph{joint cumulant} of the variables is
\begin{equation}\label{j:eq:kappa}
  \kappa(A) = \sum_{\pi\in \Pi(A)} (|\pi|-1)!(-1)^{|\pi|-1} \prod_{P \in \pi}
  \Delta(P),
\end{equation}
where $\Pi(A)$ denotes the set of all partitions of $A$ into non-empty
sets. In particular,
\[
\begin{split}
  \kappa(\{X\}) & = \e[X],\\
  \kappa(\{X,Y\}) & = \e[XY] - \e[X]\e[Y],\\
  \kappa(\{X,Y,Z\}) & = \e[XYZ] - \e[X]\e[YZ] - \e[Y]\e[XZ] - \e[Z]\e[XY]\\
&\qquad + 2 \e[X]\e[Y]\e[Z].
\end{split}
\]
The joint cumulant $\kappa(A)$ can be regarded as a measure of the mutual
dependence of the variables in $A$. For example, $\kappa(\{X,Y\})$ is simply
the covariance of $X$ and $Y$, and so $\kappa(\{X,Y\})=0$ if $X$ and
$Y$ are independent.  More generally, the following holds.

\begin{proposition}
  \label{prop:cumulant-ind}
  Let $A$ be a finite set of real-valued random variables. If $A$ can be
  partitioned into two subsets $A_1$ and $A_2$ such that all variables in
  $A_1$ are independent of all variables in $A_2$, then $\kappa(A) = 0$.
\end{proposition}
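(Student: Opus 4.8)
The plan is to prove this via the classical fact that the joint cumulant is multilinear and, more importantly, that it \emph{vanishes as soon as the variables split into two mutually independent blocks}. Concretely, I would first record the standard moment--cumulant duality: if one defines, for a partition $\pi$ of $A$, the product $\Delta(\pi) = \prod_{P\in\pi}\Delta(P)$, then M\"obius inversion on the partition lattice gives both \eqref{j:eq:kappa} and the dual identity $\Delta(A) = \sum_{\pi\in\Pi(A)} \prod_{P\in\pi}\kappa(P)$. This duality (equivalently, the exponential formula relating the moment and cumulant generating functions) is what makes the argument clean, and it is a result I would be happy to cite rather than reprove.

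The key step is then a factorisation argument. Write $A = A_1 \sqcup A_2$ with every variable in $A_1$ independent of every variable in $A_2$. For a partition $\pi\in\Pi(A)$, call $\pi$ \emph{split} if every block $P\in\pi$ lies entirely in $A_1$ or entirely in $A_2$; the split partitions are exactly the pairs $(\pi_1,\pi_2)$ with $\pi_i\in\Pi(A_i)$. By independence, for a split partition $\pi=(\pi_1,\pi_2)$ we have $\prod_{P\in\pi}\Delta(P) = \big(\prod_{P\in\pi_1}\Delta(P)\big)\big(\prod_{P\in\pi_2}\Delta(P)\big)$, and also $|\pi|-1 = (|\pi_1|-1)+(|\pi_2|-1)+1$. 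I would first handle the case where $A_1$ and $A_2$ are \emph{also} independent as blocks in the strongest possible sense, and in fact the cleanest route is: show that only split partitions contribute, then sum. For the first part, observe that for a non-split partition there is a block $P$ straddling the cut; here one needs that the joint \emph{moment} $\Delta(P)$ still factors as $\e[\prod_{Z\in P\cap A_1}Z]\cdot\e[\prod_{Z\in P\cap A_2}Z]$, which is immediate from independence. So in \eqref{j:eq:kappa} we may replace $\Delta(P)$ for every block $P$ by the product of its $A_1$-part and $A_2$-part, i.e.\ refine every block along the cut; this reorganises the sum over \emph{all} partitions into a sum over split partitions with suitable combinatorial coefficients.

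After this reduction, the computation is the standard proof that the cumulant generating function is additive over independent summands. Using the dual identity is in fact slicker: $\kappa(A)$ is the coefficient extracting the ``fully connected'' part, and since $\Delta(A) = \Delta(A_1)\Delta(A_2) = \big(\sum_{\pi_1}\prod\kappa(P)\big)\big(\sum_{\pi_2}\prod\kappa(P)\big) = \sum_{\text{split }\pi}\prod_{P\in\pi}\kappa(P)$, comparing with $\Delta(A)=\sum_{\pi\in\Pi(A)}\prod_{P\in\pi}\kappa(P)$ forces all contributions from non-split partitions to cancel; taking $A$ minimal with $\kappa(A)\neq 0$ among straddling situations and inducting on $|A|$ gives $\kappa(A)=0$ whenever $A$ straddles the cut. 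I would phrase it as an induction on $|A_1|+|A_2|$: the identity $\Delta(A)=\sum_{\pi}\prod_P\kappa(P)$ isolates $\kappa(A)$ (the term $\pi=\{A\}$) plus terms $\prod_P\kappa(P)$ over partitions with $|\pi|\ge 2$; every such finer partition has at least one block straddling the cut \emph{unless} $\pi$ is split, and by the induction hypothesis straddling blocks of size $\ge 2$ contribute $0$ while size-$1$ blocks are fine, so only split $\pi$ with $|\pi|\ge2$ survive; these sum to $\Delta(A_1)\Delta(A_2)-\kappa(A)\cdot[\text{correction}]$, and matching with $\Delta(A)=\Delta(A_1)\Delta(A_2)$ yields $\kappa(A)=0$.

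The main obstacle, and the only place requiring care, is the bookkeeping in that last inductive step: one must correctly identify which partitions are ``split'' versus ``straddling'' and make sure the induction hypothesis applies to all the proper straddling sub-blocks (it does, since any block $P$ straddling the cut has $P\cap A_1$ and $P\cap A_2$ both nonempty, hence $|P| < |A|$, and $P$ itself is a smaller straddling instance). One must also treat the degenerate cases $A_1=\emptyset$ or $A_2=\emptyset$ separately — but the statement is vacuous or trivial there — and note that $|A|=1$ is impossible for a genuine straddling instance, giving the induction base. Everything else is routine manipulation of the partition-lattice sums.
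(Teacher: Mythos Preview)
The paper does not actually prove Proposition~\ref{prop:cumulant-ind}; it simply states the result and cites~\cite{AhUsPi} for ``an elegant proof''. So there is no in-paper argument to compare against.

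Your approach via the dual identity $\Delta(B)=\sum_{\pi\in\Pi(B)}\prod_{P\in\pi}\kappa(P)$ and induction on $|A|$ is standard and correct. The clean version of the argument you are circling is: write $\Delta(A)=\kappa(A)+\sum_{|\pi|\ge 2}\prod_{P\in\pi}\kappa(P)$; for $|\pi|\ge2$ every block $P$ is a proper subset of $A$, and any block straddling the cut has $\kappa(P)=0$ by induction, so only split partitions survive; those sum to $\Delta(A_1)\Delta(A_2)$, which equals $\Delta(A)$ by independence, forcing $\kappa(A)=0$. Your write-up muddles this slightly --- the phrase ``these sum to $\Delta(A_1)\Delta(A_2)-\kappa(A)\cdot[\text{correction}]$'' is not right (there is no correction term: every split partition already has $|\pi|\ge2$ once both $A_i$ are nonempty, and the trivial partition $\{A\}$ is never split), and the first half of your plan (refining blocks along the cut directly in~\eqref{j:eq:kappa}) is a separate, messier route that you abandon. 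But the inductive argument itself is sound, and the base case $|A|=2$ is just the vanishing of the covariance. You could tighten the exposition to the three-line version above.
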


In fact, Proposition~\ref{prop:cumulant-ind} remains valid when one replaces
the independence assumption with the weaker assumption that $\Delta(B_1 \cup
B_2) = \Delta(B_1) \Delta(B_2)$ for all $B_1 \subseteq A_1$ and $B_2 \subseteq
A_2$. An elegant proof of Proposition~\ref{prop:cumulant-ind} can be found
in~\cite{AhUsPi}. The proposition motivates the definition of the following
notion.

\begin{definition}[decomposable, cluster]
  \label{definition:cluster}
  A set $A$ of random variables is \emph{decomposable} if there exists
  a partition $A= A_1\cup A_2$ such that the variables in $A_1$ are independent of the variables in $A_2$.
  A~non-decomposable set is also called a \emph{cluster}.
\end{definition}

In our setting, the notion of a cluster has a natural combinatorial
interpretation. Given $\Gamma=(\Omega,\cX)$ and $\p\in (0,1)^\Omega$, we define the
\emph{dependency graph} $G_\Gamma$ as the graph on the vertex set $[N]$ whose
edges are all pairs $\{i,j\}$ such that $i\sim j$, that is, $\gamma_i\cap \gamma_j\neq \emptyset$. It is then clear
that a set $V\subseteq [N]$ induces a connected subgraph in $G_\Gamma$
if and only if the set of random variables $\{X_i:
i\in V\}$ is a cluster (this is one reason why it
is convenient to assume $p_\omega\notin\{0,1\}$ for all $\omega\in \Omega$). In particular, the joint cumulant
$\kappa(\{X_i: i\in V\})$ vanishes unless
$G_\Gamma[V]$ is connected.

Motivated by this, we shall write $\mathcal C_k$ for the collection
of all $k$-element subsets $V\subseteq [N]$ such that $G_\Gamma[V]$ is
connected, and define
\begin{equation}\label{eq:kappadelta}
  \kappa_k =\sum_{V\in \mathcal C_k}\kappa(\{X_i: i\in V\})
  \quad\text{and}\quad
  \Delta_k =\sum_{V\in \mathcal C_k}\Delta(\{X_i: i\in V\}).
\end{equation}
Note that this definition of $\Delta_k$ is consistent with the definition of
$\Delta_2$ given by~\eqref{eq:delta2}. 
Moreover, it follows from~\eqref{j:eq:kappa} and Harris's inequality that
$|\kappa_k|\leq K_k\Delta_k$ for some $K_k$ depending only on~$k$.

\subsection{Our main result} 
Let $\Gamma=(\Omega,\cX)$ and $\p\in (0,1)^\Omega$ be as above.
Given a subset $V\subseteq [N]$, we write
\[
  \partial(V) = N_{G_\Gamma}(V)\setminus V
\]
for the external neighbourhood of $V$ in the dependency graph and let
\[
  \lambda(V) = \sum_{i\in \partial(V)} \e[X_i \mid \prod_{j\in V} X_j = 1]
\]
be the expected number of external neighbours $i$ of $V$ in the dependency
graph such that
$\gamma_i\subseteq \Omega_\p$, conditioned on $\gamma_j\subseteq \Omega_\p$ for
all $j\in V$. For all $k\in\NN$, we define
\[
  \Lambda_k(\Gamma,\p) = \max\big\{\lambda(V) : V\subseteq [N]
  \text{ and }1\leq |V|\leq k\big\}.
\]
It can be intuitively helpful to think of $\Lambda_k(\Gamma,\p)$ as a measure
of (non)expansion of the dependency graph $G_\Gamma$.

\begin{theorem}\label{j:thm:sparse}
  For every $n\in \NN$, let $\Gamma(n)=(\Omega(n),\cX(n))$ be a hypergraph and
  let $\p(n) \in (0,1)^{\Omega(n)}$. Assume that
  for every constant $k\in \NN$,
  \[ \lim_{n\to \infty} \max_{\omega\in \Omega(n)}p_\omega(n) = 0
  \quad \text{and} \quad
  \limsup_{n\to\infty} \Lambda_k(\Gamma(n),\p(n)) < \infty.
  \] 
  Let $X(n)$ denote the number of edges of $\Gamma(n)$ that are fully
  contained in $\Omega(n)_{\p(n)}$.
  Then, for every constant $k\in \NN$,
  \[ \Pr[X(n)=0] = \exp\big(- \kappa_1 + \kappa_2-\dotsb +(-1)^k \kappa_k
  +O(\delta_1+ \Delta_{k+1})\big) \]
  as $n\to\infty$,
  where $\delta_1$, $\kappa_1,\dotsc,\kappa_k$, and
  $\Delta_{k+1}$ are defined as above.
\end{theorem}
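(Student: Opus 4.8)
The plan is to interpolate between the product measure where all $X_i$ are turned on with their true conditional probabilities and the target probability $\Pr[X=0]$, tracking how much the logarithm changes. More precisely, I would introduce, for a parameter $t \in [0,1]$, a "tilted" version of the problem: working on the product space $\{0,1\}^\Omega$, multiply the measure by $\prod_{i\in[N]} \bigl(1 - t X_i\bigr)$ and normalise. Write $f(t) = \log \e\bigl[\prod_{i\in[N]}(1 - tX_i)\bigr]$; then $f(0) = 0$ and $f(1) = \log\Pr[X=0]$. Differentiating, $f'(t) = -\sum_{i\in[N]} \e_t[X_i]$, where $\e_t$ is expectation under the tilted measure and we use that $X_i^2 = X_i$. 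The whole game is to expand $\e_t[X_i]$ as a power series in $t$ whose coefficients are sums of joint cumulants of the $X_j$ over clusters containing $i$, integrate from $0$ to $1$, and recognise $\kappa_1 - \kappa_2 + \dots$ as the resulting main term, with the tail of the series controlled by $\Delta_{k+1}$.

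The key technical step is the cumulant expansion of $\e_t[X_i]$. Expanding $\prod_{j}(1 - tX_j) = \sum_{S} (-t)^{|S|} \prod_{j\in S} X_j$ and using the Möbius-type identity $(j:eq:kappa)$ relating joint moments to joint cumulants, one shows that
\[
  \e\Bigl[X_i \prod_{j\in[N]}(1-tX_j)\Bigr] = \sum_{\ell\ge 0} (-t)^\ell \sum_{\substack{S:\, i\in S,\ |S|=\ell+1}} \kappa(\{X_j : j\in S\}) \cdot \e\Bigl[\prod_{j\notin S}(1-tX_j)\Bigr] + (\text{lower-order corrections}),
\]
and crucially the cumulant $\kappa(\{X_j : j\in S\})$ vanishes unless $G_\Gamma[S]$ is connected, so only clusters of bounded size contribute to the first $k$ terms. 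Dividing by the normalising factor $\e[\prod_j(1-tX_j)]$ and collecting, one gets $f'(t) = -\sum_{\ell=0}^{k-1} c_\ell t^\ell + (\text{error})$ where $\int_0^1 c_\ell t^\ell\,dt$ telescopes into $(-1)^{\ell}\kappa_{\ell+1}/(\ell+1)$ combined across terms to produce exactly $\kappa_1 - \kappa_2 + \dots + (-1)^k\kappa_k$. The combinatorial bookkeeping here — making sure the coefficients of the integrated series match the alternating cumulant sum rather than some other linear combination — is where one must be careful; it may be cleaner to first do the computation formally (as if all series converge) to identify the answer, then justify the truncation.

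The main obstacle, and the place where the hypotheses $\max_\omega p_\omega \to 0$ and $\limsup \Lambda_k < \infty$ are used, is bounding the error term: one must show that the tail $\sum_{\ell \ge k} t^\ell \sum_{S \ni i, |S|=\ell+1, \text{connected}} |\kappa(\{X_j:j\in S\})|$ is $O(\delta_1 + \Delta_{k+1})$ after summing over $i$ and integrating, and also that the ratio $\e_t[X_i] / \e[X_i]$ stays bounded (so that the normalisation does not blow up). For the first point, I would use $|\kappa(\{X_j:j\in S\})| \le K_{|S|}\Delta(\{X_j:j\in S\})$ (stated in the excerpt) and then bound sums of joint moments over connected sets of size $\ell+1$ containing a fixed $i$ by $\Lambda_k$-type expansion bounds: each such moment $\e[\prod_{j\in S}X_j]$ can be absorbed, via Harris and the conditional-expectation interpretation of $\lambda(\cdot)$, into a product of at most $\ell$ factors each bounded by $\Lambda_{\ell}$, so that $\sum_{S}|\kappa| \lesssim \Lambda_k^{\ell} \cdot \Delta_{k+1}$-type quantities; the geometric decay from $\max p_\omega \to 0$ (each additional edge in a connected set forces at least one new vertex, costing a factor tending to $0$) makes the series over $\ell \ge k$ converge and be dominated by its first term $O(\Delta_{k+1})$. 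The stray $\delta_1$ term arises exactly as in the Harris-inequality discussion of $(j:eq:harris)$, from replacing $\log(1-\e[X_i])$ by $-\e[X_i]$ when $i$ ranges over the (few) indices with $\e[X_i]$ bounded away from $0$. Assembling these estimates and integrating $f'$ over $[0,1]$ yields the claimed formula.
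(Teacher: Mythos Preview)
Your interpolation strategy is genuinely different from the paper's route. The paper derives the theorem from the more technical Theorem~\ref{j:thm:main}, whose proof uses the chain rule $\Pr[X=0]=\prod_\ell(1-\e[X_\ell\mid\ol X_{[\ell-1]}=1])$, approximates each conditional expectation by a finite Bonferroni-truncated sum (Lemma~\ref{lemma:helper}), and then matches the result to the alternating cumulant sum via a delicate combinatorial identity (Lemmas~\ref{lemma:top1}--\ref{lemma:top3}, resting on the recursion Lemma~\ref{lemma:kappavrec}). The exact identity you implicitly use, $\e[X_i\prod_{j\ne i}(1-tX_j)]=\sum_{S\ni i}(-t)^{|S|-1}\kappa(S)\,\e[\prod_{j\notin S}(1-tX_j)]$, is correct (it is the moment--cumulant relation unwound around the block containing~$i$), and if one could replace the factor $R_S(t)=\e[\prod_{j\notin S}(1-tX_j)]/Z(t)$ by $1$, integrating over $t\in[0,1]$ indeed yields exactly $-\kappa_1+\kappa_2-\dotsb$. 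A minor slip: $f'(t)=-\sum_i\e[X_i\prod_{j\ne i}(1-tX_j)]/Z(t)$, which differs from $-\sum_i\e_t[X_i]$ by a factor $1/(1-t)$.

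The real gap is the tail control. Your proposed mechanism --- ``each additional edge in a connected set forces at least one new vertex, costing a factor tending to $0$'' --- is false: an extra $\gamma_j$ may lie entirely inside $\bigcup_{i\in V}\gamma_i$, so enlarging a cluster need not pick up any new factor of $p_\omega$. The hypothesis $\Lambda_k=O(1)$ only yields $\Delta_{m+1}\le C_m\Delta_m$ (this is Lemma~\ref{lemma:lambdai}), not $\Delta_{m+1}=o(\Delta_m)$, so there is no geometric decay, and the absolute-value tail $\sum_{m>k}K_m\Delta_m$ (with $K_m$ growing at least like a Bell number) need not converge. Concretely, for triangles in $G_{n,p}$ with $p\sim n^{-1/2}$ one has $\Delta_m\asymp 1$ for every fixed $m$, so term-by-term truncation cannot work. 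The paper avoids this entirely: the chain-rule decomposition together with alternating Bonferroni bounds produces a \emph{finite} truncation whose error is the single quantity $\Delta_{k+1}/\eps$, and then the boundedness of $\Lambda_k$ is only used afterwards (via Lemma~\ref{lemma:lambdai}) to collapse the finite sums $\sum_{i=1}^K\delta_i$ and $\sum_{i=k+1}^K\Delta_i$ to $O(\delta_1)$ and $O(\Delta_{k+1})$. To rescue your approach you would need an analogous alternating truncation of the $t$-expansion with a signed remainder controlled by $\Delta_{k+1}$, not an absolutely summable tail; that is substantially more than your sketch provides.
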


The condition $\max{\{p_\omega(n): \omega\in\Omega(n)\}}=o(1)$
implies 
$\kappa_k = \Delta_k + o(\Delta_k)$ for every fixed $k$,
as can
be seen from the definition \eqref{j:eq:kappa} of $\kappa_k$. In such cases, the first-order
behaviour of $\kappa_k$ is thus given by $\Delta_k$. However, this does
\emph{not} mean that we can then replace $\kappa_i$ by $\Delta_i$ in the
formula for $\Pr[X(n)=0]$ given by Theorem~\ref{j:thm:sparse}, because the
lower-order terms in the $\kappa_i$ can be non-negligible, see e.g.\ the proof
of Corollary~\ref{cor:k3} below.

The fact that $\kappa_1 = \e[X]$ shows that the case $k=1$ of
Theorem~\ref{j:thm:sparse} gives (a slight weakening of)
Janson's inequality~\eqref{j:eq:combi}. Unlike \eqref{j:eq:combi},
Theorem~\ref{j:thm:sparse} requires the additional assumptions $\max_{\omega\in
\Omega(n)}p_\omega(n)= o(1)$ and $\Lambda_k(\Gamma(n),\p(n)) = O(1)$
for all constant $k$.
Both conditions are perhaps not
strictly necessary. As we will see further below, the latter condition
implies that $\Delta_{k+1} = O(\Delta_k)$ for all constant $k$, which gives at
least an indication of the type of assumption that is involved.

It is natural to ask under which conditions Theorem~\ref{j:thm:sparse} can give
asymptotically sharp bounds. While computing
the first error term $\delta_1$ is generally
straightforward, it is not so obvious how one should estimate
$\Delta_{k+1}$.
Here we will focus on the rather common situation where each edge of
$\Gamma(n)$ has bounded size and there
is some $p(n) \in (0,1)$ such that $\p_\omega(n) = p(n)$ for all $\omega\in
\Omega(n)$. We then write simply $\Omega_p$ instead of $\Omega_\p$. This is the situation that we encounter in all of our applications.

For every $\Omega'\subseteq \Omega$,
define
the $j$-th codegree of $\Omega'$ by
\[d_j(\Omega') = |\{\gamma\in \cX : \Omega'\subseteq
\gamma \text{ and }|\gamma| = |\Omega'|+j\}|,\]
and let
\[ D(\Gamma,p) = \max_{j\geq 1} 
\max_{\emptyset\neq \Omega'\subseteq \Omega} d_j(\Omega')p^j; \] one
can think of this as a weighted maximum codegree of $\Gamma$. The
following is a specialised version of Theorem~\ref{j:thm:sparse} that gives an
easily verifiable condition ensuring $\Delta_{k+1} = o(1)$ for some
constant $k$.

\begin{theorem}\label{j:thm:bounded}
  Let $r$ be a fixed positive integer. For every $n\in \NN$, let $\Gamma(n) = (\Omega(n),\cX(n))$ be a hypergraph whose edges
  all have size at most $r$
  and let $p(n)$ be a real number in
  $(0,1)$. Assume
  \[ \lim_{n\to\infty} p(n)  = 0\quad\text{and}\quad 
  \limsup_{n\to\infty}D(\Gamma(n),p(n)) <\infty.\] Let
  $X(n)$ denote the number of edges of $\Gamma(n)$ that are fully contained in
  $\Omega(n)_{p(n)}$. Then, for every constant $k\in \NN$,
  \[ \Pr[X(n)=0] = \exp\big(- \kappa_1 + \kappa_2-\dotsb +(-1)^k \kappa_k
  +O(\delta_1+ \Delta_{k+1})\big) \]
  as $n\to\infty$,
  where $\delta_1$, $\kappa_1,\dotsc,\kappa_k$, and $\Delta_{k+1}$
  are defined as above.
  
  Moreover, if $D(\Gamma(n),p(n)) \leq |\Omega(n)|^{-\eps}$ 
  for some positive $\eps$, then there is a positive integer $k = k(\eps,r)$
  such that $\Delta_{k+1} =o(1)$.
\end{theorem}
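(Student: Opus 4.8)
The plan is to derive the first assertion directly from Theorem~\ref{j:thm:sparse} and to prove the second assertion by a direct counting argument.

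For the first assertion I would verify the two hypotheses of Theorem~\ref{j:thm:sparse}. The condition $\max_{\omega}p_\omega(n)=p(n)=o(1)$ holds by assumption, so it remains to show $\limsup_n\Lambda_k(\Gamma(n),p(n))<\infty$ for every fixed $k$, and for this I would prove the explicit estimate $\Lambda_k(\Gamma,p)\le 2^{rk}\bigl(1+rD(\Gamma,p)\bigr)$. Fixing $V\subseteq[N]$ with $1\le|V|\le k$ and writing $W=\bigcup_{j\in V}\gamma_j$ (so $|W|\le rk$), the event $\prod_{j\in V}X_j=1$ coincides with $\{W\subseteq\Omega_p\}$, and for every $i\in\partial(V)$ we have $\gamma_i\cap W\neq\emptyset$ and $\e[X_i\mid W\subseteq\Omega_p]=p^{|\gamma_i\setminus W|}$ by independence of coordinates. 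Classifying the edges $\gamma_i$ with $i\in\partial(V)$ by the pair $(\gamma_i\cap W,\,|\gamma_i\setminus W|)$ and using that at most $d_j(\Omega')$ edges $\gamma$ satisfy $\Omega'\subseteq\gamma$ and $|\gamma|=|\Omega'|+j$, we get
\[
  \lambda(V)\ \le\ \sum_{\emptyset\neq\Omega'\subseteq W}\Bigl(1+\sum_{j\ge1}d_j(\Omega')p^j\Bigr)\ \le\ 2^{|W|}\bigl(1+rD(\Gamma,p)\bigr),
\]
which is the desired bound. As $\limsup_nD(\Gamma(n),p(n))<\infty$, Theorem~\ref{j:thm:sparse} now gives the stated formula for $\Pr[X(n)=0]$.

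For the second assertion, observe that $\e[\prod_{i\in V}X_i]=\Pr[W(V)\subseteq\Omega_p]=p^{|W(V)|}$ with $W(V):=\bigcup_{i\in V}\gamma_i$, so $\Delta_m=\sum_{V\in\mathcal C_m}p^{|W(V)|}$ for every $m$; we may assume $|\Omega(n)|\ge1$, the case $\cX(n)=\emptyset$ being trivial. Since $G_\Gamma[V]$ is connected for $V\in\mathcal C_m$, each such $V$ admits an ordering $i_1,\dots,i_m$ of its elements with $\gamma_{i_t}\cap W_{t-1}\neq\emptyset$ for $2\le t\le m$, where $W_{t-1}:=\gamma_{i_1}\cup\dots\cup\gamma_{i_{t-1}}$ (and $W_0:=\emptyset$); moreover this ordering may be chosen to begin with any prescribed edge containing any prescribed element of $W(V)$. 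Fixing for each $V$ one such element $\omega_0$ and one such ordering, and writing $a_t:=|\gamma_{i_t}\setminus W_{t-1}|$ so that $|W(V)|=|\gamma_{i_1}|+\sum_{t=2}^m a_t$, I would bound
\[
  \Delta_m\ \le\ \sum_{\omega_0\in\Omega}\ \sum_{(i_1,\dots,i_m)}p^{|\gamma_{i_1}|}\prod_{t=2}^m p^{a_t},
\]
the inner sum ranging over tuples of distinct indices with $\omega_0\in\gamma_{i_1}$ and $\gamma_{i_t}\cap W_{t-1}\neq\emptyset$ for $t\ge2$. Summing over $i_1,i_2,\dots,i_m$ in this order, the choice of $i_1$ contributes $\sum_{\gamma\ni\omega_0}p^{|\gamma|}\le(1+r)p$ by the codegree estimate, while for each $t\ge2$ the partial sum over admissible $i_t$ with $a_t=0$ is at most $2^{rm}$ (the number of edges contained in $W_{t-1}$), and the partial sum over admissible $i_t$ with $a_t\ge1$ is at most $2^{rm}rD(\Gamma,p)\le 2^{rm}r|\Omega|^{-\eps}$ by the same estimate and the hypothesis $D(\Gamma,p)\le|\Omega|^{-\eps}$.

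The crux is then a pigeonhole bound on how many steps must enlarge the union: putting $S=\{t\in\{2,\dots,m\}:a_t\ge1\}$ one has $W(V)=\gamma_{i_1}\cup\bigcup_{t\in S}\gamma_{i_t}$, whence $|W(V)|\le r(|S|+1)$; as the $m$ distinct edges $\gamma_{i_1},\dots,\gamma_{i_m}$ are all subsets of $W(V)$, this forces $m\le 2^{|W(V)|}\le 2^{r(|S|+1)}$, i.e.\ $|S|\ge q_0:=\lceil(\log_2 m)/r\rceil-1$. Grouping the inner sum according to $S$, paying a factor at most $2^{rm}r|\Omega|^{-\eps}$ for each of the $\ge q_0$ steps in $S$ and at most $2^{rm}$ for each remaining step, and summing over the at most $2^{m-1}$ choices of $S$, I would arrive at $\Delta_m\le C(m,r)\,p\,|\Omega|^{\,1-\eps q_0}$ for a constant $C(m,r)$. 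Taking $m$ large enough that $\eps q_0>1$ — which holds once $m>2^{\,r(1+1/\eps)}$ — and using $|\Omega(n)|\ge1$ and $p(n)=o(1)$ yields $\Delta_m=o(1)$; setting $k:=m-1=k(\eps,r)$ completes the proof. The main obstacle is precisely this final step: a naive argument that chooses $\gamma_{i_1}$ among \emph{all} edges would incur $\sum_\gamma p^{|\gamma|}=\e[X]$, which is typically unbounded (of order $n^3p^3$ for triangles, for instance), so one must instead root the count at a vertex of $\Omega$ and recover the resulting factor $|\Omega|$ from the logarithmically many powers of $|\Omega|^{-\eps}$ produced by the union-enlarging steps.
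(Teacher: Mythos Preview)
Your argument is correct. The first assertion is handled exactly as in the paper: bound $\lambda(V)$ by classifying the neighbouring edges according to their intersection with $W=\bigcup_{j\in V}\gamma_j$, obtain $\Lambda_k=O(1)$ from $D(\Gamma,p)=O(1)$, and invoke Theorem~\ref{j:thm:sparse}.

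For the second assertion you take a genuinely more direct route than the paper. The paper proves an auxiliary lemma (Lemma~\ref{lemma:lambdaibounded}) to the effect that $\Delta_{k'}/\Delta_k\le K\max\{D,D^{k'}\}$ with $k'=2^{rk}$, and then \emph{iterates} it roughly $2r/\eps$ times, combining the resulting gain with the crude bound $\Delta_1\le |\Omega|^r p$. Because each iteration replaces $k$ by $2^{rk}$, the final $k(\eps,r)$ produced by the paper is a tower of height about $2r/\eps$. Your single global count --- rooting the cluster at a vertex $\omega_0\in\Omega$, ordering the edges so that prefixes stay connected, and using the pigeonhole $m\le 2^{r(|S|+1)}$ to force at least $q_0\approx(\log_2 m)/r$ union-enlarging steps --- packages the same combinatorial idea without iteration. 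The rooting trick replaces the paper's factor $\Delta_1\le |\Omega|^r p$ by $|\Omega|\cdot O(p)$, so you only need $\eps q_0>1$, giving a singly exponential $k\approx 2^{r(1+1/\eps)}$. This is a cleaner and quantitatively stronger argument; the paper's iterative version has the compensating virtue of isolating a reusable ratio bound $\Delta_{k'}/\Delta_k$ that makes no reference to $|\Omega|$.
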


Let us briefly illustrate the applicability of this result by considering
again the example of 
triangle-free random graphs.

\begin{example}[continues=j:ex:tria]
  The hypergraph $\Gamma$ of triangles in $K_n$ is $3$-uniform, so we can
  choose $r=3$ in Theorem~\ref{j:thm:bounded}. One easily verifies
  that
  $D(\Gamma,p) \leq p+ np^2$.
  We recall from our earlier discussion that $\delta_1 \leq n^3p^6$.
  Therefore Theorem~\ref{j:thm:bounded}
  implies that
  for every fixed positive integer $k$ and all $p=o(n^{-1/2})$, we have
  \[ \Pr[X = 0] = \exp\big(-\kappa_1 + \kappa_2-\dotsb +(-1)^k \kappa_k
  +O(\Delta_{k+1}) + o(1)\big). \]
  Moreover, if $p \leq n^{-1/2-\eps}$ for
  some positive constant $\eps$, then there exists a constant $k$ such that
  \[ \Pr[X = 0] = \exp\big(-\kappa_1 + \kappa_2-\dotsb +(-1)^k \kappa_k
  +o(1)\big), \]
  i.e., the asymptotics of $\Pr[X = 0]$ are given by a finite formula
  that we could in principle compute by analysing the finitely many
  possible `shapes' of clusters formed by at most $k$ triangles in $K_n$.
\end{example}

We shall derive both of the above theorems from a more general result,
Theorem~\ref{j:thm:main} below, which
has the advantage that it can
be applied in certain non-sparse settings. Its disadvantage lies
in the fact that
the error terms are somewhat less transparent.
For a set $A$ of random variables, we define
\[
\delta(A) = \Delta(A)\cdot \max{\{\e[X] : X \in A\}}.
\]
Given $k\in\NN$, we set
\begin{equation}\label{eq:delta_k}
\delta_k = \sum_{V\in \mathcal C_k}\delta(\{X_i : i\in V\}),
\end{equation}
analogously to \eqref{eq:kappadelta},
and
\begin{equation}
  \rho_k = \max_{\substack{V\subseteq [N]\\1\leq |V|\leq k}}
\Pr[X_i=1\text{ for some $i\in V \cup \partial(V)$}].
\end{equation}
Observe that this definition of $\delta_k$ generalises~\eqref{eq:delta_1}.

\begin{theorem}\label{j:thm:main}
  For every $k\in\NN$ and $\eps>0$, there is a
  $K=K(k,\eps)$ such that the following holds.
  Let $\Gamma=(\Omega,\cX)$ be a hypergraph
  and let $\p \in (0,1)^\Omega$.
  If $\rho_{k+1}\leq 1-\eps$, then
  \[
    \big|\log\Pr[X=0] + \kappa_1 - \kappa_2 + \kappa_3 - \dotsb + (-1)^{k+1}
    \kappa_k\big| \leq K \cdot (\delta_{1,K} + \Delta_{k+1,K}),
  \]
  where
  \[ \delta_{1,K} = \sum_{i=1}^K \delta_i \qquad\text{and}\qquad
  \Delta_{k+1,K} = \sum_{i=k+1}^K\Delta_i. \]
\end{theorem}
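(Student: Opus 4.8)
The plan is to expand $\Pr[X=0]=\e\bigl[\prod_{i\in[N]}(1-X_i)\bigr]$ by taking logarithms and tracking the contribution of clusters of increasing size. Write $\log\Pr[X=0] = \sum_{i\in[N]}\log\e\bigl[(1-X_i)\mid X_1=\dots=X_{i-1}=0\bigr]$ using the chain rule, or equivalently work with the identity $\log\e[\prod(1-X_i)] = -\sum_{\emptyset\neq S\subseteq[N]}\frac{(-1)^{|S|+1}}{|S|}\,\big(\text{something}\big)$ — but the cleaner route, and the one I would actually pursue, is a \emph{cluster expansion} (polymer/Mayer expansion) of $\e[\prod_i(1-X_i)]$. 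Setting $w_i=-X_i$, we have $\prod_i(1+w_i)=\sum_{S\subseteq[N]}\prod_{i\in S}w_i$, and the logarithm of the expectation of such a product is given by the Mayer series indexed by connected subgraphs of the dependency graph $G_\Gamma$; because $\kappa(\{X_i:i\in S\})$ vanishes unless $G_\Gamma[S]$ is connected (as noted after Definition~\ref{definition:cluster}), the terms of size $j$ in this expansion assemble precisely into $(-1)^j\kappa_j$. This identifies the main term $-\kappa_1+\kappa_2-\dots+(-1)^{k+1}\kappa_k$ as the truncation of the series at level $k$, and reduces the theorem to bounding the tail.

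The key steps, in order, are: (1) establish the formal cluster-expansion identity $\log\e[\prod_i(1-X_i)] = \sum_{j\ge1}(-1)^j c_j$ where $c_j$ is a sum over connected $j$-sets of a universal polynomial in the joint moments of the $X_i$ restricted to that set, and verify that $c_j=\kappa_j$ up to reorganising — here the combinatorial identity $\sum_{\pi\in\Pi(A)}(|\pi|-1)!(-1)^{|\pi|-1}\prod_P\Delta(P)$ for $\kappa(A)$ in~\eqref{j:eq:kappa} is exactly the Möbius-type coefficient that appears; (2) show that this series (and all its partial sums) converges absolutely under the hypothesis $\rho_{k+1}\le1-\eps$, using the standard Kotecký–Preiss / tree-graph bound, which requires controlling, for each cluster $V$, the total "influence" $\lambda$-type weight of clusters meeting $V\cup\partial(V)$ — this is where $\rho_{k+1}$ enters, guaranteeing that conditioning on a bounded cluster does not make the surviving probability degenerate; (3) bound the truncation error: the tail $\sum_{j\ge k+1}(-1)^jc_j$ is controlled in absolute value by $\sum_{j\ge k+1}|c_j|$, and each $|c_j|$ is bounded by a constant (depending on $j$) times $\Delta_j$ via $|\kappa(A)|\le K_{|A|}\Delta(A)$ (the Harris-based bound stated before the definition of $\mathcal C_k$), so the geometric-type decay coming from step (2) collapses $\sum_{j>k}|c_j|$ into $O(\Delta_{k+1,K})$ for a suitable finite $K=K(k,\eps)$; (4) separately, account for the "self-interaction" terms — the parts of the expansion where the same index $i$ would be reused, or more precisely the discrepancy between $\prod(1-X_i)$ and the idealised Poisson-type product — these produce the $\delta_1$-type correction (note $\delta_i$ weights $\Delta_i$ by $\max\e[X]$, exactly the gain one expects when an extra almost-independent factor is pulled out), yielding the $O(\delta_{1,K})$ contribution.

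I expect the main obstacle to be step (2)–(3) made quantitative with the \emph{correct} truncation level: one must show that the cluster weights obey a summability condition strong enough that truncating at level $k$ leaves an error bounded by $\sum_{i=k+1}^{K}\Delta_i$ for a \emph{finite} $K$ rather than an infinite tail. The natural tree-graph bound gives $|c_j|\lesssim (\text{const})^j\Delta_j$, but $\Delta_j$ need not decay in $j$ without further input; the resolution is that $\rho_{k+1}\le1-\eps$ forces a relation of the form $\Delta_{j+1}\le C(\eps)\,\rho_{k+1}\cdot(\text{something})$, i.e.\ a ratio bound $\Delta_{j+1}=O(\Delta_j)$ for $j\le K$, so that the partial sums telescope and the residual beyond level $K$ can be absorbed. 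Pinning down this ratio estimate — essentially that attaching one more edge to a cluster of size $\le k$ costs a factor at most $\rho_{k+1}\le1-\eps<1$ in the relevant weighted sense, and iterating it $K-k$ times — is the technical heart, and choosing $K=K(k,\eps)$ large enough (roughly $K\asymp k + \log(1/\text{gap})/\log(1/(1-\eps))$, or simply $K$ large enough that $(1-\eps')^{K-k}$ is negligible) is what makes the error term $\delta_{1,K}+\Delta_{k+1,K}$ finite and well-defined. The remaining work — verifying the cluster-expansion identity rigorously and matching coefficients with~\eqref{j:eq:kappa} — is bookkeeping that I would relegate to a lemma on the formal Mayer series.
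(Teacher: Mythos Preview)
Your proposal has a genuine gap in steps (2)--(3). The Kotecký--Preiss criterion you invoke requires, for every polymer $C$, that the total weighted activity of polymers incompatible with $C$ be small; here the polymers are \emph{all} connected subsets of $[N]$, and this summability is not implied by $\rho_{k+1}\le 1-\eps$. The quantity $\rho_{k+1}$ bounds, for each fixed $V$ with $|V|\le k+1$, the probability that some $X_i=1$ for $i\in V\cup\partial(V)$; it gives no control on sums such as $\sum_{C\ni i}\e[X_C]$ over connected $C$, which is what any convergence criterion needs. Your assertion that $\rho_{k+1}\le1-\eps$ ``forces a relation $\Delta_{j+1}\le C(\eps)\rho_{k+1}\cdot(\text{something})$'' conflates $\rho_{k+1}$ with $\Lambda_k$: the ratio bound $\Delta_{j+1}/\Delta_j\le\Lambda_j$ is Lemma~\ref{lemma:lambdai}, and boundedness of $\Lambda_k$ is the hypothesis of Theorem~\ref{j:thm:sparse}, \emph{not} of Theorem~\ref{j:thm:main}. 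Under the hypotheses of Theorem~\ref{j:thm:main} alone the Mayer series can diverge --- take $\Omega=\{\omega_0,\omega_1,\dots,\omega_N\}$, $\gamma_i=\{\omega_0,\omega_i\}$ for $i\in[N]$, and all $p_\omega=1/2$: then $\rho_{k+1}\le 1/2$ for every $k$, yet $\Delta_j=\binom{N}{j}2^{-j-1}$ and $\sum_j\Delta_j$ is exponential in $N$ --- so an argument via absolute convergence cannot establish the theorem as stated.

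A smaller issue: the identity in your step (1) is not exact. Already for $N=1$ one has $\log(1-\e[X_1])\neq-\kappa_1$; the discrepancy is $O(\e[X_1]^2)=O(\delta_1)$, and analogous ``diagonal'' contributions (repeated indices in the polymer sum) appear at every order of the Mayer expansion. You acknowledge this in step (4), but framing it as a separate additive correction understates the work needed to disentangle these terms from the $\Delta$-type errors at each level.

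The paper avoids both difficulties by never writing an infinite series. It uses the chain rule $\log\Pr[X=0]=\sum_\ell\log\bigl(1-\e[X_\ell\mid\overline X_{[\ell-1]}=1]\bigr)$, linearises each logarithm at cost $O(\delta_1)$, and then approximates each conditional expectation by a \emph{finite} signed sum of quantities $q(V,[\ell-1])$ indexed by connected $V\ni\ell$ with $|V|\le k$, using an iterated Bonferroni-type inequality (Lemma~\ref{lemma:helper}) that automatically supplies matching upper and lower bounds whose gap is the level-$(k{+}1)$ term. A recursive comparison (Lemmas~\ref{lemma:qrec}--\ref{lemma:bound}) then matches these $q$-quantities to combinatorial proxies $\kappa_V^{(k)}$, which a purely combinatorial identity (Lemma~\ref{lemma:top3}) reassembles into $\sum_{i\le k}(-1)^{i+1}\kappa_i$. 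The constant $K$ is not a truncation level of a convergent series but simply the largest cluster size that ever arises in these finitely many manipulations; no $\Delta_j$ with $j>K$ is produced, so no tail needs to be bounded.
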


We will derive Theorems~\ref{j:thm:sparse} and~\ref{j:thm:bounded} from
Theorem~\ref{j:thm:main} in Section~\ref{j:sec:discussion}.  The proof of
Theorem~\ref{j:thm:main}, which is the main part of this paper, will be
presented in Section~\ref{j:sec:proof}.

\subsection{Application: random graphs and hypergraphs}
\label{ssec:graphs}

A fundamental question studied by the random graphs community, raised already
in the seminal paper of Erd\H{o}s and R\'enyi~\cite{ErRe60}, is to determine
the probability that $G_{n,p}$ contains no copies of a given `forbidden' graph
$F$ (as in Example~\ref{j:ex:F-free}). The classical result of
Bollob\'as~\cite{Bo81}, proved independently by Karo\'nski and
Ruci\'nski~\cite{KaRu83}, determines this probability asymptotically for every
strictly balanced\footnote{A graph $F$ is strictly balanced if $e_F/v_F >
e_H/v_H$ for every proper non-empty subgraph $H$ of $F$.} $F$, but only for $p$
such that the expected number of copies of $F$ in $G_{n,p}$ is constant. (In
the case when $F$ is a tree or a cycle, this was done earlier by Erd\H{o}s and
R\'enyi~\cite{ErRe60} and in the case when $F$ is a complete graph, by
Sch\"urger~\cite{Sc79}.) It was later proved by Frieze~\cite{Fr92} that the
same estimate remains valid as long as the expected number of copies of $F$ in
$G_{n,p}$ is $o(n^\eps)$ for some positive constant $\eps$ that depends only on
$F$. Wormald~\cite{Wo96} and later Stark and Wormald~\cite{StWo} obtained
asymptotic formulas for significantly larger ranges of $p$ in the special case
where $F$ is a triangle.
Prior to those papers and the present work, the strongest result of this form
(i.e., determining the probability of being $F$-free asymptotically) for a
general graph $F$ followed from Harris's and Janson's inequalities,
see~\eqref{j:eq:combi}. Finally, we remark that for several special graphs $F$,
the probability that $G_{n,p}$ is $F$-free can be computed very precisely either when
$p=1/2$ or, in some cases, even for all sufficiently large $p=o(1)$ using the
known precise structural characterisations of $F$-free graphs,
see~\cite{BaMoSaWa16, HuPrSt93, OsPrTa03, PrSt92,promel1996asymptotic}.

Using Theorem~\ref{j:thm:bounded}, we can 
answer this question for a large class of graphs and a wide range of densities.
We will take a rather general point of view
and consider the analogous problem in random $r$-uniform hypergraphs,
where instead of just avoiding a single graph $F$, our goal is to avoid
every graph in some finite
family $\cF$.
Let $G^{(r)}_{n,p}$ denote the random $r$-uniform
hypergraph ($r$-graph for short)
on $n$ vertices containing every possible edge ($r$-element subset of the
vertices) with probability $p$, independently of other edges. In particular,
$G_{n,p}^{(2)}$ is simply the binomial random graph $G_{n,p}$. Given a family
$\mathcal F = \{F_1, \dotsc, F_t\}$ of $r$-graphs, we consider the problem of
determining the probability that $G^{(r)}_{n,p}$ is \emph{$\mathcal F$-free},
that is, it simultaneously avoids
all copies of all $r$-graphs in $\mathcal F$.

Since removing isomorphic duplicates from $\cF$
does not affect the probability that we are
interested in, we can assume that the $r$-graphs in $\mathcal F$ are pairwise
non-isomorphic. Similarly, we can assume that no hypergraph in $\cF$ has isolated vertices.

We encode this problem in a hypergraph $\Gamma = (\Omega,\cX)$ by proceeding
similarly as we did in Example~\ref{j:ex:F-free}. That is, we let $\Omega
=\binom{[n]}{r}$ be the edge set of $K_n^{(r)}$, the complete $r$-graph with
vertex set $[n]$, and we let $\cX$ be the collection of edge sets of
subhypergraphs of $K_n^{(r)}$ that are isomorphic to one of the $r$-graphs in
$\mathcal F$. The probability that $G_{n,p}$ is $\cF$-free is then precisely
the probability that the $p$-random subset $\Omega_p$ contains no edges of
$\Gamma$.

Note that the maximal size of an edge in $\Gamma$ is bounded
from above by the largest number of edges of an $r$-graph in $\mathcal F$,
which does not depend on~$n$. By applying Theorem~\ref{j:thm:bounded} to this
hypergraph, we can therefore get the asymptotics for the probability that
$G_{n,p}^{(r)}$ is $\mathcal F$-free in a certain range of $p$.
To quantify this range, 
given an $r$-graph $F$, define
\[
  m_*(F) = \min \left\{ \frac{e_F-e_H}{v_F-v_H} : \text{$H \subseteq F$ with $v_H < v_F$ and $e_H > 0$} \right\},
\]
where we use the convention $\min\emptyset=\infty$
and where $v_G$ and $e_G$ denote, respectively, the numbers of vertices and edges in
a (hyper)graph $G$. For a family $\mathcal F$ of $r$-graphs, we then set
\[
  m_*(\mathcal F) = \min{\{ m_*(F) : F\in \mathcal F\}}
  \quad
  \text{and}
  \quad
  d(\mathcal F) = \min{\{e_F/v_F : F\in\mathcal F\}}.
\]
It is easy to see that $\delta_1 \leq
|\mathcal F|\cdot \max{\{n^{v_F}p^{2e_F}: F\in \mathcal F\}}$ and thus $\delta_1=o(1)$
if $np^{2d(\mathcal F)} = o(1)$. Moreover, for any non-empty set $\Omega'$ of edges in $K^{(r)}_n$
whose union forms an $r$-graph $H$ with $e_H>0$ edges,
we have
\[
  \max_{j\geq 1} d_j(\Omega')p^j = O\big(\max{\{ n^{v_F-v_H}p^{e_F-e_H} : H\subseteq F\in
  \mathcal F\text{
    and } v_H< v_F\}
}\big).\]
It follows that $D(\Gamma,p) = (np^{m_*(\mathcal F)})^{\Theta(1)}$.
Theorem~\ref{j:thm:bounded} then immediately implies the following result.

\begin{corollary}
  \label{cor:graphs}
  Let $\mathcal F$ be a finite family of $r$-uniform hypergraphs
  and assume that $p=p(n)\in (0,1)$ satisfies
  \begin{equation} \label{eq:graphs-assumption}
    np^{m_*(\mathcal F)}=o(1)
    \qquad \text{and} \qquad
    np^{2d(\mathcal F)} = o(1).
  \end{equation}
  Then, for every constant $k\in \NN$, we have
  \[ \Pr\left[\text{$G_{n,p}^{(r)}$ is $\mathcal F$-free}\right] =
  \exp\big(-\kappa_1+\kappa_2-\dotsb+(-1)^k\kappa_k +O(\Delta_{k+1})+o(1)\big)
  \]
  as $n\to\infty$.
  Moreover, if 
  $np^{m_*(\mathcal F)}\leq
  n^{-\eps}$ for some positive $\eps$, then
  there is a positive integer $k = k(\eps,\mathcal F)$ such
  that $\Delta_{k+1}=o(1)$.
\end{corollary}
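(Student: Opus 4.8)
The plan is to derive Corollary~\ref{cor:graphs} from Theorem~\ref{j:thm:bounded} by verifying the two hypotheses of that theorem for the hypergraph $\Gamma=\Gamma(n)=(\Omega(n),\cX(n))$ encoding the $\mathcal F$-containment problem, and then to translate the error terms $\delta_1$ and $\Delta_{k+1}$ into the language of the density parameters $m_*(\mathcal F)$ and $d(\mathcal F)$. Most of the ingredients are already spelled out in the discussion immediately preceding the corollary, so the proof is essentially an orchestration of those remarks. First I would record that the edges of $\Gamma$ have bounded size: every edge of $\Gamma$ is the edge set of a copy of some $F\in\mathcal F$, hence has size $e_F\leq \max_{F\in\mathcal F} e_F$, a constant not depending on $n$; so Theorem~\ref{j:thm:bounded} applies with $r$ replaced by this constant (the value of $r$ in the statement of that theorem is just an upper bound on the edge size of $\Gamma$, not the uniformity of the underlying hypergraph).

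Next I would check the quantitative hypothesis $\limsup_n D(\Gamma(n),p(n))<\infty$. Here I use the displayed estimate from the text: for a non-empty set $\Omega'$ of edges of $K_n^{(r)}$ whose union is an $r$-graph $H$ with $e_H>0$, one has $d_j(\Omega')p^j = O(\max\{n^{v_F-v_H}p^{e_F-e_H}: H\subseteq F\in\mathcal F,\ v_H<v_F\})$, because choosing a copy of $\gamma\in\cX$ with $\Omega'\subseteq\gamma$ and $|\gamma|=|\Omega'|+j$ amounts to choosing an $F\in\mathcal F$ and an embedding of $F$ into $K_n^{(r)}$ extending the embedding of $H$, of which there are $O(n^{v_F-v_H})$, and $p^j\le p^{e_F-e_H}$ up to constants (one should be slightly careful that different copies of $H$ sitting on the same vertex set contribute only a constant factor, which is fine since $\mathcal F$ is finite). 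Taking the maximum over $j$ and over $\Omega'$, the exponent of $n^{v_F-v_H}p^{e_F-e_H}$ is, after dividing through, a positive multiple of $n p^{m_*(\mathcal F)}$ raised to a bounded power; combined with contributions from the single-edge codegrees, which are $O(n^{v_F-2}p^{e_F-1})$-type terms also dominated by powers of $np^{m_*(\mathcal F)}$, this gives $D(\Gamma,p)=(np^{m_*(\mathcal F)})^{\Theta(1)}$. By the first part of~\eqref{eq:graphs-assumption}, $np^{m_*(\mathcal F)}=o(1)$, so in particular $D(\Gamma(n),p(n))\to 0$ and the $\limsup$ is finite; also $p(n)\to 0$ follows since $m_*(\mathcal F)<\infty$ (as $\mathcal F$ contains an $r$-graph with at least one edge and hence a proper subgraph with an edge, unless $F$ has a single edge, a degenerate case that can be handled directly or excluded by the no-isolated-vertices assumption). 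Thus both hypotheses of Theorem~\ref{j:thm:bounded} hold, and it yields
\[
  \Pr\left[\text{$G_{n,p}^{(r)}$ is $\mathcal F$-free}\right]
  = \exp\big(-\kappa_1+\kappa_2-\dotsb+(-1)^k\kappa_k + O(\delta_1+\Delta_{k+1})\big).
\]

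It then remains to show $\delta_1=o(1)$, so that the $\delta_1$ term can be absorbed into an $o(1)$, giving the displayed formula with error $O(\Delta_{k+1})+o(1)$. For this I use the estimate already noted in the text: $\delta_1 = \sum_i \e[X_i]^2$, and since $X_i$ is the indicator that a fixed copy of some $F\in\mathcal F$ is present, $\e[X_i]=p^{e_F}$; the number of copies of $F$ is $O(n^{v_F})$, so $\delta_1 \le |\mathcal F|\max_{F\in\mathcal F} O(n^{v_F}p^{2e_F}) = O(\max_{F\in\mathcal F}(np^{2e_F/v_F})^{v_F})$, which is $o(1)$ precisely because $np^{2d(\mathcal F)}=o(1)$ and $np^{2e_F/v_F}\le np^{2d(\mathcal F)}$ for every $F\in\mathcal F$ (using $p<1$ and $e_F/v_F\ge d(\mathcal F)$). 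This establishes the first displayed conclusion of the corollary. Finally, for the ``moreover'' part, if $np^{m_*(\mathcal F)}\le n^{-\eps}$ then, since $D(\Gamma,p)=(np^{m_*(\mathcal F)})^{\Theta(1)}$ and $|\Omega(n)|=\binom{n}{r}=n^{\Theta(1)}$, we get $D(\Gamma(n),p(n))\le |\Omega(n)|^{-\eps'}$ for a suitable $\eps'=\eps'(\eps,r)>0$; the last sentence of Theorem~\ref{j:thm:bounded} then supplies a constant $k=k(\eps',r)=k(\eps,\mathcal F)$ with $\Delta_{k+1}=o(1)$, and the formula collapses to $\exp(-\kappa_1+\dots+(-1)^k\kappa_k+o(1))$.

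The only genuinely non-routine step is the second one, namely pinning down that $D(\Gamma,p)=(np^{m_*(\mathcal F)})^{\Theta(1)}$, i.e.\ that the weighted maximum codegree of $\Gamma$ is governed exactly by $m_*(\mathcal F)$ rather than some other density invariant. The subtlety is that $d_j(\Omega')$ counts extensions of an arbitrary sub-$r$-graph $H$ (the union of $\Omega'$) to a copy of some $F\in\mathcal F$, and one must check that $H$ ranges over \emph{all} subgraphs of members of $\mathcal F$ with at least one edge and fewer vertices, so that optimising $n^{v_F-v_H}p^{e_F-e_H}$ over such pairs reproduces exactly the minimum defining $m_*(F)$ (taking the worst $j$ and the worst $H$ corresponds to taking the minimum of $(e_F-e_H)/(v_F-v_H)$). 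Everything else is bookkeeping with the definitions of $\delta_1$, $m_*$, and $d$, together with the monotonicity $p<1$.
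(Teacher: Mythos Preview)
Your proposal is correct and follows essentially the same approach as the paper: the paper's proof is precisely the discussion immediately preceding the corollary, establishing $\delta_1\le |\mathcal F|\max_F n^{v_F}p^{2e_F}$ and $D(\Gamma,p)=(np^{m_*(\mathcal F)})^{\Theta(1)}$, then invoking Theorem~\ref{j:thm:bounded} directly. Your write-up supplies more detail on each step (in particular the translation to $|\Omega(n)|^{-\eps'}$ for the ``moreover'' clause), but the argument is the same; the only minor wrinkle is that the no-isolated-vertices convention does not exclude single-edge $F$, though in that degenerate case $D(\Gamma,p)=0$ trivially and $p\to 0$ follows from the second hypothesis $np^{2d(\mathcal F)}=o(1)$.
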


The conditions in \eqref{eq:graphs-assumption} can be further simplified under
certain natural assumptions on the family $\mathcal F$. Recall that the
\emph{$r$-density} of an $r$-graph $F$ with at least two edges is
\[
  m_r(F) = \max \left\{ \frac{e_H-1}{v_H-r} : \text{$H \subseteq F$ with $e_H > 1$} \right\}
\]
and that $F$ is \emph{$r$-balanced} if this maximum is achieved with $H = F$,
that is, if $m_r(F) = (e_F-1)/(v_F-r)$. 
Observe that for every $F$ with at
least two edges, we have
\[
  m_r(F) \ge \frac{e_F-1}{v_F-r} \ge m_*(F).
\]
We claim that if $F$ is $r$-balanced, then in fact $m_r(F) = m_*(F)$. Indeed,
writing $\alpha_K = (e_K-1)/(v_K-r)$,
we see that for every $H \subseteq F$ with $v_H < v_F$ and $e_H > 1$,
\[
  \frac{e_F - e_H}{v_F - v_H} = \frac {\alpha_F(v_F - r) -
\alpha_H(v_H-r)}{(v_F - r) - (v_H-r)} \ge 
  m_r(F),
\]
since $m_r(F) = \alpha_F \ge \alpha_H$ (as $F$ is $r$-balanced) and this inequality
continues to hold if $e_H = 1$. Thus $m_*(F) \ge
m_r(F)$ and so $m_*(F) = m_r(F)$.

Another simplification is possible in the important special case $r=2$. In
this case, the second condition in~\eqref{eq:graphs-assumption} follows from
the first condition, since $2e_F/v_F \ge (e_F-1)/(v_F-2)$ for every graph $F$
and consequently $m_*(\mathcal F) \le 2d(\mathcal F)$ for every family of
graphs $\mathcal F$.

\begin{corollary}
  \label{cor:balanced-graphs}
  Let $\mathcal F$ be a finite family of $2$-balanced graphs
  with at least two edges each and let
  $p=p(n)\in(0,1)$ be such that $p = o(n^{-1/m_2(F)})$ for every $F \in
  \mathcal F$. Then, for every fixed $k\in \NN$, we have
  \[
    \Pr\left[\text{$G_{n,p}$ is $\mathcal F$-free}\right] =
    \exp\big(-\kappa_1+\kappa_2-\dotsb+(-1)^k\kappa_k
    +O(\Delta_{k+1})+o(1)\big).
  \]
  as $n\to\infty$.
  Moreover, if 
  $p\leq n^{-1/m_2(F)-\eps}$ for
  some positive $\eps$ and all $F\in\mathcal F$, then
  there is a positive integer $k = k(\eps,\mathcal F)$ such
  that $\Delta_{k+1}=o(1)$.
\end{corollary}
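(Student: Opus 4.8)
The plan is to obtain Corollary~\ref{cor:balanced-graphs} as a straightforward specialisation of Corollary~\ref{cor:graphs} to the $2$-uniform case, so essentially the whole task is to verify that the single hypothesis ``$p=o(n^{-1/m_2(F)})$ for every $F\in\mathcal F$'' implies both conditions in~\eqref{eq:graphs-assumption}, and then to run the quantitative version of this deduction for the ``moreover'' part.

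First I would collect the two facts about $m_*$ that have already been established in the discussion preceding the corollary: (i) since each $F\in\mathcal F$ is $2$-balanced, $m_*(F)=m_2(F)$, and hence $m_*(\mathcal F)=\min\{m_2(F):F\in\mathcal F\}$; and (ii) in the case $r=2$ one has $m_*(\mathcal F)\le 2d(\mathcal F)$, because $2e_F/v_F\ge (e_F-1)/(v_F-2)\ge m_*(F)$ for every graph $F$ with at least two edges. Next I would translate the hypothesis: as $\mathcal F$ is finite and $x\mapsto n^{-1/x}$ is increasing for $n\ge 2$, the assumption $p=o(n^{-1/m_2(F)})$ for all $F$ is equivalent to $p=o(n^{-1/m_*(\mathcal F)})$, i.e.\ to $np^{m_*(\mathcal F)}=o(1)$, which is the first condition in~\eqref{eq:graphs-assumption}. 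For the second condition I would invoke (ii): since $p\in(0,1)$ and $2d(\mathcal F)\ge m_*(\mathcal F)$, we get $np^{2d(\mathcal F)}\le np^{m_*(\mathcal F)}=o(1)$. With both conditions of~\eqref{eq:graphs-assumption} in hand, Corollary~\ref{cor:graphs} immediately yields the claimed asymptotic expansion with error term $O(\Delta_{k+1})+o(1)$.

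For the ``moreover'' part I would argue along the same lines. The stronger hypothesis $p\le n^{-1/m_2(F)-\eps}$ for every $F$ gives, by the same monotonicity, $p\le n^{-1/m_*(\mathcal F)-\eps}$, and therefore $np^{m_*(\mathcal F)}\le n^{1-(1/m_*(\mathcal F)+\eps)m_*(\mathcal F)}=n^{-\eps\, m_*(\mathcal F)}$. Since $\mathcal F$ is a fixed finite family and every $F\in\mathcal F$ has a proper subgraph with a positive number of edges (e.g.\ a single edge, as $v_F\ge 3$ for a simple graph with at least two edges), we have $m_*(F)>0$ for each $F$ and hence $c:=m_*(\mathcal F)>0$ is a positive constant depending only on $\mathcal F$. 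Thus $np^{m_*(\mathcal F)}\le n^{-\eps c}$ with $\eps c>0$, and the ``moreover'' part of Corollary~\ref{cor:graphs} produces an integer $k=k(\eps c,\mathcal F)=k(\eps,\mathcal F)$ for which $\Delta_{k+1}=o(1)$.

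The argument is little more than bookkeeping, so I do not anticipate a genuine obstacle; the only points that require mild care are getting the direction of the monotonicity right when passing from the family of per-$F$ thresholds $n^{-1/m_2(F)}$ to the single threshold $n^{-1/m_*(\mathcal F)}$, and double-checking that specialising to $r=2$ really does make the second condition $np^{2d(\mathcal F)}=o(1)$ in Corollary~\ref{cor:graphs} redundant.
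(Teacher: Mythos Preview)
Your proposal is correct and follows exactly the approach the paper takes: the paper derives Corollary~\ref{cor:balanced-graphs} directly from Corollary~\ref{cor:graphs} via the two facts established in the preceding discussion, namely $m_*(F)=m_2(F)$ for $2$-balanced $F$ and $m_*(\mathcal F)\le 2d(\mathcal F)$ when $r=2$. Your treatment of the ``moreover'' clause, spelling out that $np^{m_*(\mathcal F)}\le n^{-\eps\,m_*(\mathcal F)}$ with $m_*(\mathcal F)>0$, is a bit more explicit than the paper (which leaves this implicit), but the argument is the same.
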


Of course, neither Corollary~\ref{cor:graphs} nor
Corollary~\ref{cor:balanced-graphs} would be particularly useful if one could
not compute the values $\kappa_k$ for at least several small integers~$k$.
In Section~\ref{j:sec:applications}, we outline a general approach for doing
so and perform the
calculations for two special cases.

\begin{corollary}
  \label{cor:k3c4}
  If $p = o(n^{-4/5})$, then the probability that $G_{n,p}$ is simultaneously
  $K_3$-free and $C_4$-free is asymptotically
  \[
    \exp\Big(- \frac{n^3p^3}{6} - \frac{n^4p^4}{8} + \frac{n^6p^7}{4} + \frac{2n^5p^6}{3}\Big).
  \]
\end{corollary}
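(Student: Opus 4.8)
The plan is to apply Corollary~\ref{cor:balanced-graphs} to the family $\mathcal F=\{K_3,C_4\}$ and then unwind the resulting expression. Both $K_3$ and $C_4$ have at least two edges and are $2$-balanced, with $m_2(K_3)=2$ and $m_2(C_4)=\tfrac32$ (in each case the densest subgraph is the whole graph). Since $p=o(n^{-4/5})$ and $\tfrac45>\tfrac23>\tfrac12$, the hypothesis $p=o(n^{-1/m_2(F)})$ holds for both $F\in\mathcal F$, so for every fixed $k$ the corollary gives
\[
  \Pr\left[\text{$G_{n,p}$ is $\{K_3,C_4\}$-free}\right]=\exp\big(-\kappa_1+\kappa_2-\dots+(-1)^k\kappa_k+O(\Delta_{k+1})+o(1)\big),
\]
and it remains to choose $k$ so that $\Delta_{k+1}=o(1)$ and to evaluate the $\kappa_i$ up to an $o(1)$ error.

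To pin down the right $k$, I would analyze which clusters matter. Here $\Gamma$ is the hypergraph on $\Omega=\binom{[n]}{2}$ whose edges are the edge sets of the copies of $K_3$ and $C_4$ in $K_n$. If $V\in\mathcal C_j$ is a cluster and $H\subseteq K_n$ is the union of the corresponding subgraphs, then $\Delta(\{X_i:i\in V\})=p^{e_H}$ and the number of clusters with a given union type is $\Theta(n^{v_H})$, so $\Delta_j=\Theta\big(\sum_H n^{v_H}p^{e_H}\big)$, the sum being over the finitely many connected graphs $H$ that arise as a union of $j$ distinct copies; such a summand is $o(1)$ unless $v_H>\tfrac45 e_H$. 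A short case analysis (using that such an $H$ is bridgeless, hence $e_H\ge v_H$) shows that the only connected unions of copies with $v_H>\tfrac45 e_H$ are $K_3$, $C_4$, the ``house'' graph $\Theta_{1,2,3}$ (a triangle and a $C_4$ sharing an edge), $K_{2,3}=\Theta_{2,2,2}$ (three $C_4$'s through a common pair of vertices), and $\Theta_{1,3,3}$ (two $C_4$'s sharing a single edge with otherwise disjoint vertex sets). Crucially, the house and $\Theta_{1,3,3}$ each contain only two copies of $K_3$ or $C_4$, whereas $K_{2,3}$ contains three (its three $4$-cycles); hence no cluster of $j\ge 4$ distinct copies has a non-$o(1)$ summand, so $\Delta_4=o(1)$, and the corollary with $k=3$ yields
\[
  \Pr\left[\text{$G_{n,p}$ is $\{K_3,C_4\}$-free}\right]=\exp\big(-\kappa_1+\kappa_2-\kappa_3+o(1)\big).
\]

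Finally I would compute the three cumulant sums. First, $\kappa_1=\e[X]=\binom n3 p^3+3\binom n4 p^4=\tfrac{n^3p^3}{6}+\tfrac{n^4p^4}{8}+o(1)$, the lower-order terms of the binomial coefficients being negligible under $p=o(n^{-4/5})$. For $\kappa_2=\sum_{\{i,j\}:i\sim j}(\e[X_iX_j]-\e[X_i]\e[X_j])$, only pairs whose union is the house, $K_{2,3}$, or $\Theta_{1,3,3}$ contribute beyond $o(1)$ (and in each case the term $\e[X_i]\e[X_j]$ is itself $o(1)$); counting copies of these graphs in $K_n$ — about $n^5/2$ houses, each with one triangle--$C_4$ pair; about $n^5/12$ copies of $K_{2,3}$, each with three pairs of $4$-cycles; and about $n^6/4$ copies of $\Theta_{1,3,3}$, each with one pair of $4$-cycles — gives $\kappa_2=\tfrac{n^5p^6}{2}+\tfrac{n^5p^6}{4}+\tfrac{n^6p^7}{4}+o(1)$. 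For $\kappa_3=\sum_{V\in\mathcal C_3}\kappa(\{X_i:i\in V\})$, the only non-$o(1)$ contribution comes from the triple of $4$-cycles inside a copy of $K_{2,3}$; such a triple has joint cumulant $p^6-3p^{10}+2p^{12}$, and there are about $n^5/12$ of them, so $\kappa_3=\tfrac{n^5p^6}{12}+o(1)$. Combining, $-\kappa_1+\kappa_2-\kappa_3=-\tfrac{n^3p^3}{6}-\tfrac{n^4p^4}{8}+\big(\tfrac34-\tfrac1{12}\big)n^5p^6+\tfrac{n^6p^7}{4}+o(1)$, and since $\tfrac34-\tfrac1{12}=\tfrac23$ this is exactly the claimed formula.

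The main difficulty will be twofold. First, one must notice that truncating at $k=2$ (i.e.\ using Janson-type corrections only through pairwise interactions) is not enough: the three $4$-cycles of a $K_{2,3}$ form a $3$-cluster whose union has cyclomatic number only $2$, so $\Delta_3$ does not vanish and the third cumulant $\kappa_3$ must be retained — its contribution is precisely what corrects the coefficient $\tfrac34$ to $\tfrac23$. Second, the combinatorial bookkeeping must be done carefully: one has to verify that the short list of relevant graphs is complete (i.e.\ that every other connected union of copies has $v_H\le\tfrac45 e_H$), account for each graph's automorphisms and for the multiplicity of ways it decomposes into copies (as with the three $4$-cycle pairs inside a $K_{2,3}$), and then translate embedding counts into cluster counts without double-counting.
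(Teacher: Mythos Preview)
Your proposal is correct and follows essentially the same approach as the paper: both apply Corollary~\ref{cor:balanced-graphs} with $k=3$, compute $\kappa_1,\kappa_2,\kappa_3$, and show $\Delta_4=o(1)$. The only organisational difference is that the paper enumerates all irreducible $\{K_3,C_4\}$-complexes of size $\le 2$ directly (seven of them, via a figure) and then argues about size-$3$ complexes by noting which size-$2$ complexes are ``incomplete'', whereas you first classify the possible underlying union graphs $H$ with $v_H>\tfrac45 e_H$ and then count complexes supported on each; both routes yield the same non-negligible contributions and the same arithmetic.
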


\begin{corollary}
  \label{cor:k3}
  If $p = o(n^{-7/11})$, then the probability that $G_{n,p}$ is triangle-free is asymptotically
  \[
    \exp\Big(- \frac{n^3p^3}{6} + \frac{n^4p^5}{4} - \frac{7n^5p^7}{12} + \frac{n^2p^3}{2} - \frac{3n^4p^6}{8} + \frac{27n^6p^9}{16} \Big).
  \]
\end{corollary}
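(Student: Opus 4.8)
My plan is to apply Corollary~\ref{cor:balanced-graphs} with $\mathcal F = \{K_3\}$, which is legitimate because $K_3$ is $2$-balanced with $m_2(K_3) = 2$ and $p = o(n^{-7/11})$ is in particular $o(n^{-1/2}) = o(n^{-1/m_2(K_3)})$. This yields, for every fixed $k$, $\Pr[\text{$G_{n,p}$ is $K_3$-free}] = \exp\big(-\kappa_1 + \kappa_2 - \dotsb + (-1)^k \kappa_k + O(\Delta_{k+1}) + o(1)\big)$. The first task is to identify a $k$ with $\Delta_{k+1} = o(1)$, and I would use two observations. First, for $p = o(n^{-7/11})$ a monomial $n^a p^b$ is $o(1)$ precisely when $11a \le 7b$. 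Second, if $V \in \mathcal C_j$ is a cluster of $j$ triangles of $K_n$ and $H$ is the graph formed by the union of their edge sets, with $v_H$ vertices and $e_H$ edges, then $11 v_H - 7 e_H \le 15 - 3j$: build $H$ up one triangle at a time along a connected ordering of the triangle-intersection graph; the first triangle contributes $11 \cdot 3 - 7 \cdot 3 = 12$ to $11 v_H - 7 e_H$, while every later triangle adds at most one new vertex (then with at least two new edges) or no new vertex (then with at least one new edge), hence contributes at most $\max\{11 - 14,\, -14,\, -7\} = -3$. Since for each fixed $j$ there are only finitely many isomorphism types of $j$-triangle clusters, it follows that $\Delta_j = o(1)$ for all $j \ge 5$. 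So $k = 4$ works, and it remains to compute $-\kappa_1 + \kappa_2 - \kappa_3 + \kappa_4$ up to an additive $o(1)$.

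I would evaluate each $\kappa_j$ by grouping the clusters according to the isomorphism type $H$ of their edge-union, writing $\kappa_j = \sum_H N_H(n)\sum_S \kappa(\{X_T : T \in S\})$, where $N_H(n)$ counts the copies of $H$ in $K_n$ and $S$ ranges over the $j$-element sets of triangles of $H$ that form a cluster with union all of $H$. By the weight bound above, only types $H$ with $11 v_H - 7 e_H$ equal to or just below its maximum $15 - 3j$ contribute a non-negligible monomial, and within such a type only the leading term of $N_H(n)$ together with the lowest power of $p$ in the relevant cumulant survives (for $\kappa_1$ and $\kappa_2$ a second monomial survives as well). The types that matter turn out to be: the single triangle (for $\kappa_1$); two triangles sharing an edge, union $K_4$-minus-an-edge (for $\kappa_2$); for $\kappa_3$, three triangles of a $K_4$, the book $B_3$ of three triangles on a common edge, and the ``path'' of three triangles; and for $\kappa_4$, the four triangles of a $K_4$ together with the five isomorphism types of four-triangle cluster whose union has $6$ vertices and $9$ edges --- namely the book $B_4$, a ``caterpillar'' of four triangles, a $B_3$ with a pendant triangle on one of its non-central edges, a triangle with a pendant triangle glued to each of its three edges, and a $K_4$-minus-an-edge with two pendant triangles. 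The joint cumulants are read off from~\eqref{j:eq:kappa} directly --- for instance $\kappa(\{X,Y\}) = p^5 - p^6$ for two triangles sharing an edge, $\kappa = p^6 - 3p^8 + 2p^9$ for three triangles of a $K_4$, $\kappa = p^7 - 3p^8 + 2p^9$ for $B_3$, and $\kappa = p^7 - 2p^8 + p^9$ for the triangle-path --- and for the dominant monomials the only coefficient needed is that of $p^{e(\bigcup A)}$, which equals $1$ for every cluster $A$ because the one-block partition is the only one minimising the total edge count (adjacent triangles placed in different blocks would share an edge, making the blocks' edge-sets overlap).

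Substituting, $\kappa_1 = \binom n3 p^3 = \tfrac{n^3 p^3}{6} - \tfrac{n^2 p^3}{2} + o(1)$, $\kappa_2 = \binom n2\binom{n-2}2 (p^5 - p^6) = \tfrac{n^4 p^5}{4} - \tfrac{n^4 p^6}{4} + o(1)$, $\kappa_3 = \tfrac{n^4 p^6}{6} + \big(\tfrac{1}{12} + \tfrac12\big) n^5 p^7 + o(1) = \tfrac{n^4 p^6}{6} + \tfrac{7 n^5 p^7}{12} + o(1)$ (the two parts of the $n^5 p^7$ coefficient coming from $B_3$ and from the triangle-path), and $\kappa_4 = \tfrac{n^4 p^6}{24} + \big(\sum_H |\Aut(H)|^{-1}\big) n^6 p^9 + o(1)$, the sum running over the five $(6,9)$-types. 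The coefficients of $n^3 p^3$, $n^2 p^3$, $n^4 p^5$ and $n^5 p^7$ now come out at once; the coefficient of $n^4 p^6$ is $-\tfrac14 - \tfrac16 + \tfrac1{24} = -\tfrac38$; and the coefficient of $n^6 p^9$ is $\sum_H |\Aut(H)|^{-1}$, which with the five automorphism orders equal to $48$, $2$, $2$, $6$, $2$ evaluates to $\tfrac1{48} + \tfrac12 + \tfrac12 + \tfrac16 + \tfrac12 = \tfrac{81}{48} = \tfrac{27}{16}$ --- matching the target formula exactly.

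The step I expect to be the main obstacle is the last one: making the list of $(6,9)$-type four-triangle clusters complete and correct, computing each one's automorphism group, and checking that each such union has exactly four triangles (so that only the full four-element $S$ contributes). This is finite bookkeeping --- every cluster here uses at most four triangles, hence boundedly many vertices --- but it is the error-prone part, and I would double-check it by listing these graphs independently via their degree sequences and by recomputing $\sum_H |\Aut(H)|^{-1}$ as a sum over labelled copies. A secondary point needing care is confirming that the weight bound genuinely excludes every other type and every other monomial, so that nothing is missed in $\kappa_1, \dotsc, \kappa_4$ and $\Delta_5 = o(1)$ really holds.
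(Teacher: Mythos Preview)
Your approach is essentially the same as the paper's: invoke Corollary~\ref{cor:balanced-graphs} with $k=4$, enumerate the cluster types up to size four, compute the cumulants, and verify that $\Delta_5=o(1)$. Your final computations are correct and match the paper's.

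There is, however, a genuine gap in your inductive argument for the weight bound $11v_H - 7e_H \le 15 - 3j$. You write that each later triangle ``adds at most one new vertex (then with at least two new edges) or no new vertex (then with at least one new edge)'', but this omits the case where the new triangle adds \emph{no} new vertex and \emph{no} new edge --- which happens precisely when all three of its edges are already present in the union (for instance, the fourth triangle of a $K_4$ after the first three). In that case the contribution to $11v_H - 7e_H$ is $0$, not $\le -3$. Consequently the bound fails for large $j$: all $20$ triangles of $K_6$ give $11\cdot 6 - 7\cdot 15 = -39 > 15 - 60 = -45$. You therefore cannot conclude $\Delta_j = o(1)$ for all $j \ge 5$ by this induction.

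Fortunately, only $\Delta_5 = o(1)$ is needed, and for $j=5$ the bound $11v_H - 7e_H \le 0$ does hold --- but you must argue this more carefully. The zero-contribution step can occur only when the union already contains an extra triangle, and for clusters of size at most four this forces $(v_H,e_H)\in\{(4,6),(5,8)\}$; starting from either of these and adding a fifth triangle (necessarily with at least one new vertex or one new edge, since $K_4$ has only four triangles and the $(5,8)$-graphs have at most five) brings $11v_H - 7e_H$ below zero. The paper handles this by a direct case analysis rather than a uniform induction. A smaller point: your description ``$K_4$-minus-an-edge with two pendant triangles'' is ambiguous --- depending on where the pendants are attached, this can realise several of the other four $(6,9)$-types. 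The fifth type you want is the one in which all four triangles share a common vertex (a path in the dual whose shared edges are all incident to one vertex); it does have $|\Aut|=2$, so your numerical answer is unaffected.
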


As mentioned above, Corollary~\ref{cor:k3} was obtained independently by Stark
and Wormald~\cite{StWo}. It extends a result of
Wormald~\cite{Wo96} that applies to a smaller range of $p$. However, the
derivation of Corollary~\ref{cor:k3} from Theorem~\ref{j:thm:bounded} is very
short compared to the proofs in~\cite{StWo} and~\cite{Wo96}.

\subsection{Application: arithmetic progressions}
\label{ssec:aps}

As a second application, we will estimate the probability that $[n]_p$, the
$p$-random subset of $[n]$, is $r$-AP-free, i.e., does not contain
any arithmetic progression of length $r$.
As in Example~\ref{j:ex:r-AP}, we encode this problem in
the hypergraph $\Gamma=(\Omega,\cX)$ on $\Omega = [n]$ whose edge set is the collection
$\cX$ of $r$-APs in $[n]$.

Since any two distinct integers are contained at most $\binom{r}{2} = O(1)$ common
$r$-APs, it is easy to see that $\delta_1 = O(n^2p^{2r})$
and
$D(\Gamma,p) = O(p+np^{r-1})$. Therefore, Theorem~\ref{j:thm:bounded} has
the following corollary.

\begin{corollary}
  \label{cor:aps}
  Let $r \geq 3$ be a fixed integer and assume $p = p(n)\in (0,1)$ satisfies $p =
  o(n^{-1/(r-1)})$. Then, for every fixed $k\in\NN$, we have
  \[
    \Pr\big[\text{$[n]_p$ is $r$-AP-free}\big] = \exp\big(-\kappa_1 + \kappa_2 - \dotsb
    + (-1)^k \kappa_k +O(\Delta_{k+1})+o(1)\big)
  \]
  as $n\to\infty$.
  Moreover, if $p=o(n^{-1/(r-1)-\eps})$
  for some positive constant $\eps$, then there exists a positive integer
  $k=k(\eps,r)$ such that
  $\Delta_{k+1}=o(1)$.
\end{corollary}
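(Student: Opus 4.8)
The plan is to obtain Corollary~\ref{cor:aps} as a direct instance of Theorem~\ref{j:thm:bounded}, applied to the hypergraph $\Gamma = \Gamma(n) = (\Omega, \cX)$ with $\Omega = [n]$ and $\cX$ the collection of $r$-APs in $[n]$, as set up in Example~\ref{j:ex:r-AP}. Every edge of $\Gamma$ is an $r$-AP and hence has exactly $r$ elements, so all edges have size at most $r$, and the fixed integer $r$ from the statement serves as the parameter $r$ of Theorem~\ref{j:thm:bounded}. The first step is to justify the two quantitative bounds recorded just before the corollary, $\delta_1 = O(n^2 p^{2r})$ and $D(\Gamma, p) = O(p + np^{r-1})$. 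For $\delta_1$: each $r$-AP $\gamma_i$ has $\e[X_i] = p^r$, and $[n]$ contains $O(n^2)$ $r$-APs (an $r$-AP is determined by its least element and common difference), so $\delta_1 = \sum_i \e[X_i]^2 = O(n^2) \cdot p^{2r}$. For $D(\Gamma, p)$: fix a nonempty $\Omega' \subseteq [n]$; since all edges have size exactly $r$, $d_j(\Omega') = 0$ unless $j = r - |\Omega'|$. If $|\Omega'| = 1$, the number of $r$-APs through a given point is $O(rn)=O(n)$ (choose the index of the point within the AP and the common difference), contributing $O(np^{r-1})$; if $2 \le |\Omega'| \le r-1$, then two fixed distinct points of $\Omega'$ lie in at most $\binom{r}{2}$ common $r$-APs (their pair of positions in the AP determines it), so $d_{r - |\Omega'|}(\Omega') = O(1)$ and the contribution is $O(p^{r-|\Omega'|}) = O(p)$. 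Taking the maximum over $j \ge 1$ and $\Omega'$ yields $D(\Gamma, p) = O(p + np^{r-1})$.

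Next I would check the hypotheses of Theorem~\ref{j:thm:bounded} and read off the first conclusion. From $p = o(n^{-1/(r-1)})$ and $r \ge 3$ it follows that $p = o(1)$ and $np^{r-1} = o(n \cdot n^{-1}) = o(1)$, hence $D(\Gamma(n), p(n)) = o(1)$; in particular $\limsup_n D(\Gamma(n), p(n)) < \infty$. Theorem~\ref{j:thm:bounded} then gives, for every fixed $k$, that $\Pr[X(n) = 0] = \exp(-\kappa_1 + \kappa_2 - \dotsb + (-1)^k \kappa_k + O(\delta_1 + \Delta_{k+1}))$. Since $\delta_1 = O(n^2 p^{2r}) = o(n^{2 - 2r/(r-1)}) = o(n^{-2/(r-1)}) = o(1)$, the contribution $O(\delta_1)$ may be absorbed into an additive $o(1)$, which is exactly the formula claimed in the corollary.

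For the \emph{moreover} clause, assume $p = o(n^{-1/(r-1) - \eps})$. Then for all large $n$ one has $p \le n^{-1/(r-1) - \eps}$, whence $np^{r-1} \le n^{1 - (r-1)(1/(r-1) + \eps)} = n^{-(r-1)\eps}$; combining with $D(\Gamma, p) \le C(p + np^{r-1})$ for a suitable constant $C$ gives $D(\Gamma(n), p(n)) \le n^{-\eps'}$ for all large $n$, where $\eps' = \tfrac{1}{2}\min\{1/(r-1) + \eps,\, (r-1)\eps\} > 0$ (the factor $\tfrac{1}{2}$ absorbing $C$). As $|\Omega(n)| = n$, the hypothesis of the \emph{moreover} part of Theorem~\ref{j:thm:bounded} holds with exponent $\eps'$, so there is a positive integer $k = k(\eps', r)$, equivalently $k = k(\eps, r)$ since $\eps'$ depends only on $\eps$ and $r$, for which $\Delta_{k+1} = o(1)$.

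The whole argument is a verification of size and codegree bounds, so there is no serious obstacle; the only parts that need a little care are the codegree estimate $D(\Gamma, p) = O(p + np^{r-1})$, where one must account for every size of $\Omega'$ from $1$ up to $r-1$ and observe that all sizes $\ge 2$ contribute only $O(p)$, and the routine upgrade of the $o(\cdot)$ hypotheses on $p$ to honest polynomial bounds $D \le n^{-\eps'}$ so that the \emph{moreover} part of Theorem~\ref{j:thm:bounded} applies verbatim.
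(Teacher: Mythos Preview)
Your proof is correct and follows exactly the paper's approach: the paper derives the corollary directly from Theorem~\ref{j:thm:bounded} after recording the bounds $\delta_1 = O(n^2p^{2r})$ and $D(\Gamma,p) = O(p + np^{r-1})$, and you supply precisely these verifications together with the routine check that $\delta_1 = o(1)$ and that the hypothesis on $p$ in the \emph{moreover} clause yields $D(\Gamma,p)\le n^{-\eps'}$ for some $\eps'>0$.
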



In Section~\ref{j:sec:applications}, we will perform the necessary calculations
to determine the precise asymptotics of $\Pr[[n]_p\text{ is $r$-AP-free}]$ for $p=o(n^{-4/7})$.

\begin{corollary}\label{cor:3ap}
  If $p = o(n^{-4/7})$, then the probability that $[n]_p$ is $3$-AP-free is asymptotically
  \[
    \exp\Big(-\frac{n^2p^3}{4} + \frac{7n^3p^5}{24}\Big).
  \]
\end{corollary}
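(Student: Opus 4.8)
The plan is to apply Corollary~\ref{cor:aps} with $r=3$ and a suitably chosen constant $k$, and then compute the relevant cumulants $\kappa_1,\dots,\kappa_k$ explicitly. Since $p = o(n^{-4/7}) = o(n^{-1/2-\eps})$ for a positive $\eps$, the ``moreover'' part of Corollary~\ref{cor:aps} guarantees a constant $k=k(\eps)$ with $\Delta_{k+1} = o(1)$, so that
\[
  \Pr\big[[n]_p \text{ is $3$-AP-free}\big] = \exp\big(-\kappa_1 + \kappa_2 - \dots + (-1)^k\kappa_k + o(1)\big).
\]
First I would pin down which clusters of $3$-APs can contribute a non-negligible term. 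A cluster of $j$ distinct $3$-APs in $[n]$ whose union spans $v$ integers and which has edge-overlaps contributes (after summing over all $\Theta(n^{v})$ positions, up to lower-order corrections for degenerate configurations) a term of order $n^{v} p^{e}$, where $e$ counts distinct points with multiplicity coming from the moment/cumulant expansion — more precisely $\kappa$ of a $j$-element cluster is a signed sum of products $\prod \Delta(P)$ over partitions $\pi$, and $\Delta(P) = p^{|\bigcup_{i\in P}\gamma_i|}$, so the dominant monomial in $\kappa(\{X_i:i\in V\})$ for a connected configuration on $v$ points is $\pm\, p^{v}$ with the leading coefficient equal (up to sign) to the number of spanning connected sub-configurations, by inclusion–exclusion. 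Thus a cluster on $v$ points contributes a term $\asymp n^{v'}p^{v}$ where $n^{v'}$ counts the number of ways to place it; the exponent pair $(v', v)$ must satisfy $n^{v'}p^{v} = \omega(1)$ under $p = o(n^{-4/7})$, i.e. $v' / v > 4/7$, for it to survive in the final formula.

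Next I would enumerate, up to isomorphism of the ``shape'' (the $3$-uniform hypergraph formed by the union of the $3$-APs, together with the $3$-AP structure), all clusters passing this threshold. The single $3$-AP $\{a, a+b, a+2b\}$ gives $\kappa_1 = \e[X] = \binom{\#\text{$3$-APs in }[n]}{1}p^3 = (1+o(1))\tfrac{n^2}{4}p^3$ — one must count $3$-APs carefully: there are $\lfloor (n-1)/2\rfloor \cdot (\text{something})$, and the standard count gives $\sim n^2/4$. Clusters of two intersecting $3$-APs: if two $3$-APs share exactly one point the union has $5$ points, placed in $\Theta(n^3)$ ways, giving a term $\asymp n^3 p^5$; since $3/5 > 4/7$ this survives. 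If they share two points then (as any two integers lie in $O(1)$ common $3$-APs) this is a degenerate $O(n^2 p^{4})$ or $O(n^2p^5)$ contribution, negligible since $2/4 = 1/2 < 4/7$ and $2/5 < 4/7$. Clusters of three $3$-APs: the least wasteful connected shape glues them in a path or star sharing single points, giving unions on $7$ points placed in $\Theta(n^4)$ ways, a term $\asymp n^4 p^7$; but $4/7 = 4/7$ is exactly the threshold, so under the strict hypothesis $p = o(n^{-4/7})$ we get $n^4 p^7 = o(1)$ and all $j\ge 3$ clusters are negligible. Hence only $\kappa_1$ and the two-$3$-AP part of $\kappa_2$ matter, and it suffices to take $k=2$.

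It then remains to compute $\kappa_2$ to leading order. For a pair $\{i,j\}$ with $\gamma_i \cap \gamma_j \neq \emptyset$, $\kappa(\{X_i,X_j\}) = \e[X_iX_j] - \e[X_i]\e[X_j] = p^{|\gamma_i\cup\gamma_j|} - p^{6}$; summing over all such ordered-then-halved pairs, the dominant contribution comes from pairs sharing exactly one point, of which there are $(1+o(1))\,c\,n^3$ for an explicit constant $c$ that I would determine by a direct count of pairs of $3$-APs in $\mathbb Z$ (or $[n]$) meeting in one element — classifying by which of the three positions of each $3$-AP is the shared point and solving the resulting linear relations between the two common differences. This yields $\kappa_2 = (1+o(1))\,c\,n^3 p^5$, and matching against the target formula forces $c = 7/24$. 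The lower-order ``$-p^6$'' terms and the $|\gamma_i\cup\gamma_j|\le 4$ pairs are all $o(1)$ as checked above, and the $\delta_1 = O(n^2p^{2r}) = O(n^2p^6) = o(1)$ error is absorbed. Assembling, $-\kappa_1 + \kappa_2 = -\tfrac{n^2p^3}{4} + \tfrac{7n^3p^5}{24} + o(1)$, which gives the claim.

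\textbf{Main obstacle.} The delicate part is the bookkeeping in the combinatorial counts: getting the constant $1/4$ for the number of $3$-APs and, especially, the constant $7/24$ for single-point-intersecting pairs of $3$-APs right, including correctly handling which point is shared (six position-combinations, some of which coincide or are forbidden by $b\ne 0$) and making sure no ``accidental'' coincidences inflate or deflate the count by a lower-order but still-$\Omega(1)$ amount. A secondary subtlety is verifying rigorously that every connected configuration of $\ge 3$ distinct $3$-APs spans at least $7$ points unless it is degenerate in a way that makes it even less wasteful in the $p$-exponent — i.e. that the ``path/star of triples'' is genuinely the extremal shape — so that the choice $k=2$ is legitimate and $\Delta_3 = o(1)$ under $p = o(n^{-4/7})$; this is exactly the kind of finite shape-enumeration that the ``moreover'' clause of Corollary~\ref{cor:aps} promises is possible, but it still has to be carried out.
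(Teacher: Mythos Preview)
Your proposal is correct and follows essentially the same approach as the paper: apply Corollary~\ref{cor:aps} with $r=3$ and $k=2$, compute $\kappa_1$ and $\kappa_2$, and show $\Delta_3 = O(n^4p^7) = o(1)$. The one noteworthy difference is in how the constant $7/24$ is obtained: rather than classifying pairs of $3$-APs by which position in each progression is shared, the paper computes it by letting $f(i) = n/2 + \min\{i,n-i\} + O(1)$ be the number of $3$-APs through $i$ and evaluating $\tfrac12\sum_{i=1}^n f(i)^2 = \tfrac{7n^3}{24} + O(n^2)$ directly, which sidesteps the case analysis you flagged as the main obstacle.
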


\subsection{Related work and open problems}

Janson's inequality was first proved (by Svante Janson himself) during the 1987
conference on random graphs in Pozna\'n, in response to Bollob\'as's
announcement of his estimate~\cite{Bo88} for the chromatic number of random
graphs, which requires a strong upper bound on the probability that a random
graph contains no large cliques. A related estimate was found, during the same
conference, by {\L}uczak. Janson's original proof was based on the analysis of
the moment-generating function of $X$, whereas {\L}uczak's proof used
martingales. Both of these arguments can be found in~\cite{JaLuRu90}. Our proof
of Theorem~\ref{j:thm:main} is inspired by a subsequent proof of Janson's
inequality that was found soon afterwards by Boppana and Spencer~\cite{BoSp89};
it uses only the Harris inequality. Somewhat later, Janson~\cite{Ja90} showed
that his proof actually gives bounds for the whole lower tail, and not just for
the probability $\Pr[X=0]$. Around the same time, Suen~\cite{Su90} proved a
correlation inequality that is very similar to Janson's. Suen's inequality
gives a slightly weaker estimate (which was later sharpened by
Janson~\cite{Ja98}), but is applicable in a much more general context. Another
generalisation of Janson's inequality was obtained recently by Riordan and
Warnke~\cite{RiWa15}.

In~\cite{Wo96}, Wormald proved that if $p=o(n^{-2/3})$, then
\begin{equation}
  \label{eq:k3gnp}
  \Pr[G_{n,p}\text{ is $K_3$-free}] = \exp\Big(-\frac{n^3p^3}{6}+\frac{n^4p^5}{4}-\frac{7n^5p^7}{12}+o(1)\Big),
\end{equation}
whereas for $G_{n,m}$ with $m=d\binom{n}{2}$ and $d=o(n^{-2/3})$, we have
\[
  \Pr[G_{n,m}\text{ is $K_3$-free}] = \exp\Big(- \frac{n^3d^3}{6} +o(1) \Big).
\]
These results were strengthened recently by Stark and Wormald~\cite{StWo}, who obtained the approximation in Corollary~\ref{cor:k3} (which
implies~\eqref{eq:k3gnp}) and also
\[
  \Pr[G_{n,m}\text{ is $K_3$-free}] = \exp\Big(- \frac{n^3d^3}{6} + \frac{n^2d^3}{2} -\frac{n^4d^6}{8} + o(1) \Big),
\]
where $m=d\binom{n}{2}$, which holds when $d=o(n^{-7/11})$. In fact, they were able to obtain a more general result,
which states that in the range where Corollary~\ref{cor:balanced-graphs} is applicable, the probability that $G_{n,p}$ or $G_{n,m}$
is $F$-free is approximated by the exponential of the first few terms of a power series in $n$ and $p$ (resp.\ $d$) whose terms depend only on $F$.
However, the way in which these terms are computed is rather implicit. In contrast, in the setting of binomial random subsets
such as $G_{n,p}$, our Theorem~\ref{j:thm:sparse} explains what these terms are.

While our results (and our methods) apply only to binomial subsets (e.g., $G_{n,p}$ and not $G_{n,m}$), the results for $G_{n,p}$
could conceivably be transferred to $G_{n,m}$ using the identity
\[
  \Pr[\text{$G_{n,m}$ is $F$-free}] =
  \frac{\Pr[G_{n,p}\text{ is $F$-free}] \cdot \Pr[e(G_{n,p}) = m \mid \text{$G_{n,p}$ is $F$-free} ]}{\Pr[e(G_{n,p})=m]}.
\]
It was shown by Stark and Wormald~\cite{StWo} that the conditional probability in the right-hand side
can be computed explicitly for a carefully chosen $p$ of the same order of magnitude as $d$.
However, this is not at all an easy task.

It would be interesting to establish a similar relationship in the more abstract
and general setting of random induced subhypergraphs. If this was possible, Theorem~\ref{j:thm:sparse}
could be used to count independent sets of a given (sufficiently small) cardinality in
general hypergraphs. In some sense, this would complement the counting results that
can be obtained with the so-called hypergraph container method developed by Balogh, Morris, and Samotij~\cite{BaMoSa15}
and by Saxton and Thomason~\cite{SaTh15}. Whereas the container method applies to somewhat large
independent sets, which exhibit a `global' structure, our Theorem~\ref{j:thm:sparse} would yield
estimates on the number of smaller independent sets that only exhibit `local' structure.
In particular, the container method can be used to estimate the probability that $G_{n,p}$ is $F$-free
whenever $p = \omega(n^{-1/m_2(F)})$ for every nonbipartite graph $F$. For $p$ in this range, $G_{n,p}$ conditioned on
being $F$-free is approximately $(\chi(F)-1)$-partite with very high probability. On the other hand, our method (and the method
of~\cite{StWo}) applies whenever $p=o(n^{-1/m_2(F)})$, provided that $F$ is $2$-balanced.
For $p$ in this range, the edges of $G_{n,p}$ conditioned on being $F$-free are still distributed very uniformly
with probability close to one.

\section{Proofs of Theorems~\ref{j:thm:sparse} and \ref{j:thm:bounded}}
\label{j:sec:discussion}

In this section, we will show that Theorem~\ref{j:thm:main} implies
Theorems~\ref{j:thm:sparse} and~\ref{j:thm:bounded}.
To prove Theorem~\ref{j:thm:sparse}, we need the following lemma, which also
clarifies the definition of $\Lambda_k$.

\begin{lemma}
  \label{lemma:lambdai}
  For every hypergraph $\Gamma=(\Omega,\cX)$, every $\p \in
  (0,1)^\Omega$,
  and every positive integer $k$, we have
  \[
    \Delta_{k+1}/\Delta_k \leq \Lambda_k(\Gamma,\p)
    \quad\text{and}\quad
    \delta_{k+1}/\delta_k \leq \Lambda_k(\Gamma,\p). 
  \]
\end{lemma}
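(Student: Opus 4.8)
The plan is to prove both inequalities simultaneously by exhibiting an injection-free ``charging'' argument: each $(k+1)$-element connected set $W\in\mathcal C_{k+1}$ will be written as $W = V\cup\{i\}$ with $V\in\mathcal C_k$ and $i\in\partial(V)$, and the quantity $\Delta(\{X_j:j\in W\})$ (resp.\ $\delta(\{X_j:j\in W\})$) attached to $W$ will be bounded in terms of the quantity attached to $V$ times a conditional expectation that is summed up by $\lambda(V)$.

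First I would observe the key combinatorial fact: if $G_\Gamma[W]$ is connected and $|W|=k+1\ge 2$, then $W$ contains at least one vertex whose removal leaves the graph connected (e.g.\ a non-cut vertex of $G_\Gamma[W]$, or simply a leaf of any spanning tree of $G_\Gamma[W]$). Fix, for each $W\in\mathcal C_{k+1}$, such an element $i = i(W)$, and set $V = V(W) = W\setminus\{i\}\in\mathcal C_k$; since $W$ is connected, $i$ has a neighbour in $V$, so $i\in\partial(V)$. This gives a map $W\mapsto (V(W), i(W))$ from $\mathcal C_{k+1}$ into the set of pairs $(V,i)$ with $V\in\mathcal C_k$, $i\in\partial(V)$, and this map is injective (we can recover $W = V\cup\{i\}$). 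Hence
\[
  \Delta_{k+1} = \sum_{W\in\mathcal C_{k+1}}\Delta(\{X_j:j\in W\})
  \le \sum_{V\in\mathcal C_k}\ \sum_{i\in\partial(V)} \Delta(\{X_j:j\in V\cup\{i\}\}).
\]
Now for the inner term, since each $X_j$ is a $0/1$ variable, $\Delta(\{X_j:j\in V\cup\{i\}\}) = \e\big[\prod_{j\in V}X_j\cdot X_i\big] = \Pr\big[\prod_{j\in V}X_j=1\big]\cdot\e\big[X_i\mid\prod_{j\in V}X_j=1\big] = \Delta(\{X_j:j\in V\})\cdot\e\big[X_i\mid\prod_{j\in V}X_j=1\big]$. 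Summing over $i\in\partial(V)$ collapses the inner sum to exactly $\Delta(\{X_j:j\in V\})\cdot\lambda(V) \le \Delta(\{X_j:j\in V\})\cdot\Lambda_k(\Gamma,\p)$, and summing over $V\in\mathcal C_k$ gives $\Delta_{k+1}\le \Lambda_k(\Gamma,\p)\cdot\Delta_k$, which is the first claim.

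For the second inequality the argument is identical, using the same injective map $W\mapsto(V(W),i(W))$; the only point to check is that the extra factor $\max\{\e[X_j]:j\in W\}$ in the definition of $\delta$ behaves correctly. Since $V\subseteq W$, we have $\max\{\e[X_j]:j\in W\}\ge\max\{\e[X_j]:j\in V\}$, which goes the wrong way, so instead I would note $\delta(\{X_j:j\in W\}) = \Delta(\{X_j:j\in W\})\cdot\max\{\e[X_j]:j\in W\}$ and split into two cases according to whether the maximising index lies in $V$ or equals $i$; in either case one bounds $\max\{\e[X_j]:j\in W\}$ crudely, but the cleanest route is to keep the factor attached to $W$ and write $\delta(\{X_j:j\in W\}) = \big(\prod_{j\in V}X_j$-expectation$\big)\cdot\e[X_i\mid\cdots]\cdot\max\{\e[X_j]:j\in W\}$ and then bound $\max\{\e[X_j]:j\in W\}\cdot\e[X_i\mid\prod_{j\in V}X_j=1]$; summing this over $i\in\partial(V)$ is not literally $\lambda(V)$ because of the $\max$. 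The honest fix, which I expect to be the only mild technical point, is that one does \emph{not} need the bound at this level of precision: one can simply observe $\delta(\{X_j:j\in W\})\le \Delta(\{X_j:j\in V\cup\{i\}\})\cdot\max\{\e[X_j]:j\in\Omega_\p\text{-edges}\}$ is too lossy, so instead keep $\delta(\{X_j:j\in W\}) = \delta(\{X_j:j\in V\})\cdot\e[X_i\mid\prod_{j\in V}X_j=1]\cdot\frac{\max\{\e[X_j]:j\in W\}}{\max\{\e[X_j]:j\in V\}}$ and note this last ratio is at most... — here the subtlety is real but harmless, and in fact the intended reading is almost certainly that one orders $W$ so that the index achieving $\max\{\e[X_j]:j\in W\}$ lies in $V$ (possible whenever $|V|\ge 1$, by choosing the removed leaf $i$ to be any leaf of a spanning tree \emph{other} than the maximiser, unless $k+1=2$ in which case both choices of $i$ are leaves and one simply picks $i$ to be the non-maximiser). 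With $i=i(W)$ chosen this way, $\max\{\e[X_j]:j\in W\}=\max\{\e[X_j]:j\in V\}$, the map $W\mapsto(V(W),i(W))$ is still injective, and the computation above goes through verbatim with $\delta$ in place of $\Delta$, yielding $\delta_{k+1}\le\Lambda_k(\Gamma,\p)\cdot\delta_k$.

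The main obstacle is thus not analytic but bookkeeping: ensuring that the chosen ``removable'' element $i(W)$ can simultaneously be taken to be a non-cut vertex of $G_\Gamma[W]$ \emph{and} avoid the index maximising $\e[X_j]$, so that the same injection proves both inequalities; for $|W|\ge 3$ a spanning tree has at least two leaves so such a choice exists, and for $|W|=2$ it is immediate, so the argument is complete.
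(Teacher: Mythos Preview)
Your proposal is correct and follows essentially the same approach as the paper: for each $W\in\mathcal C_{k+1}$, remove a non-cut vertex $i$ to obtain $V=W\setminus\{i\}\in\mathcal C_k$ with $i\in\partial(V)$, factor $\Delta(\{X_j:j\in W\})=\Delta(\{X_j:j\in V\})\cdot\e[X_i\mid\prod_{j\in V}X_j=1]$, and sum. For the $\delta$ inequality, the paper does exactly what you eventually settle on after your false starts: it uses that every connected graph on at least two vertices has at least two non-cut vertices, so one can always choose the removed vertex $i$ so that the index achieving $\max\{\e[X_j]:j\in W\}$ stays in $V$, making $\max\{\e[X_j]:j\in W\}=\max\{\e[X_j]:j\in V\}$ and allowing the same computation to go through verbatim.
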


\begin{proof}
  For every $V\in \mathcal C_{k+1}$ there exist at least two
  distinct $i \in V$ such that $V\setminus \{i\} \in \mathcal C_k$.
  Indeed, every connected graph with at least two vertices has at least two
  non-cut vertices. Therefore for each $V\in\mathcal C_{k+1}$ we can
  make a canonical choice of a set $V^-\subset V$ such that $V^-\in \mathcal
  C_k$ and
  \begin{equation}
    \label{eq:lambdai1}
    \max{\{\e[X_i]: i\in V\}}
    = \max{\{\e[X_i] : i \in V^-\}}.
  \end{equation}
  Denoting by $i_V$ the unique element in $V\setminus V^-$, we have
  $i_V\in \partial(V^-)$ because $G_\Gamma[V]$ is connected. Moreover,
  \[
    \Delta(\{X_i : i\in V\}) =
    \Delta(\{X_i : i\in V^-\}) \cdot \e[X_{i_V} \mid \prod_{i\in V^-}
    X_i = 1]
  \]
  and, analogously,
  \[
    \delta(\{X_i : i\in V\}) =
    \delta(\{X_i: i\in V^-\}) \cdot \e[X_{i_V} \mid \prod_{i\in V^-}
    X_i = 1]
  \]
  It follows that 
  \[
    \begin{split}
      \Delta_{k+1} & \leq \sum_{V^-\in \mathcal C_k} \Delta(\{X_i:
      i\in V^-\}) \sum_{j \in \partial(V^-)}
      \e[X_j \mid \prod_{i\in V^-}
      X_i = 1]\\ & = \sum_{V^-\in \mathcal C_k}
      \Delta(\{X_i : i\in V^-\})\cdot \lambda(V^-)
      \leq \Delta_k \cdot \Lambda_k(\Gamma,\p)
    \end{split} 
  \]
  and, similarly, $\delta_{k+1} \leq \delta_k \cdot \Lambda_k(\Gamma,\p)$.
\end{proof}

\begin{proof}[Proof of Thm.\ \ref{j:thm:sparse} from Thm.\ \ref{j:thm:main}]
  Assume that $\Gamma(n) = (\Omega(n),\cX(n))$ and
  $\p(n) = (p_\omega(n))_{\omega\in \Omega(n)}$
  are as in the statement of the theorem.

  Fix any $k\in \NN$ and $\eps\in (0,1)$ and let $K=K(k,\eps)$ be as given by
  Theorem~\ref{j:thm:main}. We verify that
  $\Gamma(n)$ and $\p(n)$ satisfy the assumption of
  Theorem~\ref{j:thm:main}
  for all sufficiently large $n$. For this, consider some nonempty $V \subseteq [N]$ of size at most $k+1$.
  Since $p=o(1)$, we have $\sum_{i\in
  V}\e[X_i] \leq (1-\eps)/2$
  for all sufficiently large $n$. Additionally, if $i\in \partial(V)$,
  then $\gamma_i$ intersects $\bigcup_{j\in V}\gamma_j$.
  Therefore,
  \[ 
    \sum_{i\in \partial(V)}
    \e[X_i]
    \leq \lambda(V) \cdot \max{\{p_\omega(n) :\omega \in \bigcup_{j\in V}\gamma_j\}}
    \leq(1-\eps)/2.
  \]
  By the union bound, this implies
  \[
    \rho_{k+1} = \max_{\substack{V\subseteq [N]\\1\leq |V|\leq k+1}}
    \Pr[X_i=1 \text{ for some $i\in V \cup
    \partial(V)$}] \leq 1-\eps.
  \]
  Therefore, Theorem~\ref{j:thm:main} yields
  \[
    |\log\Pr[X=0]+\kappa_1-\kappa_2+\dotsb+(-1)^{k+1}\kappa_k| \leq K \cdot
    (\delta_{1,K} + \Delta_{k+1,K}).
  \]
  Using
  Lemma~\ref{lemma:lambdai} and our assumption that $\Lambda_i(\Gamma(n),\p(n))=O(1)$
  for all constant $i$ (in particular, for all $1\leq i\leq K$), we get
  \[
    K \cdot \delta_{1,K} = K \cdot \sum_{i=1}^K\delta_i = O(\delta_1)
    \quad\text{and}\quad 
    K \cdot \Delta_{k+1,K} = K \cdot \sum_{i=k+1}^K\Delta_i =  O(\Delta_{k+1}),
  \]
  which completes the proof.
\end{proof}

\begin{lemma}
  \label{lemma:lambdaibounded}
  For all positive integers $k$ and $r$, there exist $k' = k'(k,r)$ and $K =
  K(k,r)$ such that, for every $p\in (0,1)$ and every hypergraph
  $\Gamma=(\Omega,\cX)$ with all edges of size at most $r$,
  \[
    \Delta_{k'}/\Delta_k \leq K\cdot \max{\{D(\Gamma,p), D(\Gamma,p)^{k'}\}}.
  \]
\end{lemma}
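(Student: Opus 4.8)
The plan is to bound $\Delta_{k'}$ by grouping the connected $k'$-element sets according to a canonical connected $k$-element subset of each, and then showing that each group carries an extra factor of $\max\{D,D^{k'}\}$, where $D:=D(\Gamma,p)$. First I would record the elementary identity $\Delta(\{X_i:i\in V\})=\Pr[\bigcup_{i\in V}\gamma_i\subseteq\Omega_p]=p^{|\bigcup_{i\in V}\gamma_i|}$; writing $w(V)$ for this quantity (the \emph{weight} of $V$) and $\mathrm{supp}(V):=\bigcup_{i\in V}\gamma_i$, we have $\Delta_j=\sum_{V\in\mathcal C_j}w(V)$. We may assume that $\Gamma$ has at least one edge of size $\ge2$, since otherwise no two distinct edges of $\Gamma$ intersect, so $\mathcal C_j=\emptyset$ for every $j\ge2$ and there is nothing to prove (recall $k'\ge2$); in particular $r\ge2$. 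The choice will be $k':=2^{kr}+1$, so that $k'>k$, and I set $M:=(k'-1)r$.

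Next I would set up the grouping. For each $V'\in\mathcal C_{k'}$ fix a connected ordering $(i_1,\dots,i_{k'})$ of its elements (one exists because $G_\Gamma[V']$ is connected), and call $\mathrm{pr}(V'):=\{i_1,\dots,i_k\}$ its \emph{prefix}; since prefixes of connected orderings induce connected subgraphs, $\mathrm{pr}(V')\in\mathcal C_k$. For $t>k$ put $a_t:=|\gamma_{i_t}\setminus(\gamma_{i_1}\cup\dots\cup\gamma_{i_{t-1}})|$; then $0\le a_t\le r-1$ (the upper bound because $\gamma_{i_t}$ meets the earlier edges) and $w(V')=w(\mathrm{pr}(V'))\cdot\prod_{t>k}p^{a_t}$. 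The point of taking $k'$ this large is that the prefix then has strictly smaller support: as $V'$ consists of $k'$ distinct edges, all contained in $\mathrm{supp}(V')$, we have $2^{|\mathrm{supp}(V')|}\ge k'>2^{kr}$, hence $|\mathrm{supp}(V')|>kr\ge|\mathrm{supp}(\mathrm{pr}(V'))|$, and therefore $\sum_{t>k}a_t\ge1$.

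Now group by the prefix: $\Delta_{k'}=\sum_{V\in\mathcal C_k}w(V)\,\Phi(V)$, where $\Phi(V):=\sum_{V':\,\mathrm{pr}(V')=V}\prod_{t>k}p^{a_t}$, so it suffices to prove $\Phi(V)\le K\max\{D,D^{k'}\}$ for every $V\in\mathcal C_k$. Since distinct $V'$ with the same prefix have distinct tails $(i_{k+1},\dots,i_{k'})$, I would bound $\Phi(V)$ above by the sum of $\prod_{t>k}p^{a_t}$ over all sequences of $k'-k$ edges of $\Gamma$ (repeats allowed) in which every $\gamma_{i_t}$ meets $\mathrm{supp}(V)\cup\gamma_{i_{k+1}}\cup\dots\cup\gamma_{i_{t-1}}$ and with $\sum_{t>k}a_t\ge1$. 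Classifying such a sequence by the nonempty set $S\subseteq\{k+1,\dots,k'\}$ of steps with $a_t\ge1$, and using that the support $W$ reached at any point satisfies $|W|\le M$: a step in $S$ contributes at most $\sum_{\emptyset\neq\Omega'\subseteq W}\sum_{a=1}^{r-1}d_a(\Omega')p^a\le2^M(r-1)D$, while a step outside $S$ contributes at most $2^M$ (there are at most $2^M$ edges inside $W$, each of weight $p^0=1$). As these per-step bounds hold uniformly, the sum over sequences with external-step set $S$ factorizes, and I obtain
\[
\Phi(V)\le\sum_{\emptyset\neq S\subseteq\{k+1,\dots,k'\}}\big(2^M(r-1)D\big)^{|S|}\big(2^M\big)^{k'-k-|S|}=(2^M)^{k'-k}\sum_{\ell=1}^{k'-k}\binom{k'-k}{\ell}\big((r-1)D\big)^\ell .
\]
Since $r-1\ge1$, a short case analysis (treating $D\le1$ and $D\ge1$ separately) gives $\sum_{\ell=1}^{m}\binom{m}{\ell}(cD)^\ell\le(2c)^m\max\{D,D^m\}\le(2c)^m\max\{D,D^{k'}\}$ with $c:=r-1$ and $m:=k'-k$, so $\Phi(V)\le K\max\{D,D^{k'}\}$ for $K=K(k,r):=\big(2^{M+1}(r-1)\big)^{k'-k}$; summing over $V\in\mathcal C_k$ then yields $\Delta_{k'}\le K\max\{D,D^{k'}\}\,\Delta_k$, as required.

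The step I expect to be the main obstacle is controlling the \emph{internal} edge additions, i.e.\ the steps with $a_t=0$, where the support does not grow and no factor of $D$ is gained. This is exactly what dictates the choice $k'>2^{kr}$: only boundedly many edges can lie inside any support that the process reaches, so internal additions cannot by themselves account for the increase in support needed to pass from a connected $k$-set to a connected $k'$-set; hence at least one external step must occur, which supplies the factor $D$ in the regime $D\le1$, while the total number of such steps is at most $k'-k$, which controls the regime $D\ge1$.
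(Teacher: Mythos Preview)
Your proof is correct and follows essentially the same route as the paper's: pick a connected ordering of each $V'\in\mathcal C_{k'}$, take the first $k$ elements as a connected prefix, argue by pigeonhole (since at most $2^{kr}$ edges fit inside a support of size $\le kr$) that at least one later step must enlarge the support, and bound each extension step via the codegree definition of $D(\Gamma,p)$. The only differences are cosmetic: the paper takes $k'=2^{rk}$ rather than $2^{kr}+1$, uses the step-dependent bound $2^{r\ell}$ on intersection choices rather than your uniform $2^M$, and writes the extension sum directly as $\sum_{j_{k+1},\dots,j_{k'}}\prod_\ell 2^{r\ell}D^{(j_\ell)}p^{j_\ell}$ (with $D^{(0)}=1$) instead of explicitly separating internal and external steps.
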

\begin{proof}
  Define $D^{(j)} = \max_{\emptyset \neq \Omega'\subseteq \Omega}
  d_j(\Omega')$ for every $j\geq 1$ and note that then $D(\Gamma,p) =
  \max_{j\geq 1} D^{(j)}p^j$.
  It is convenient to also define $D^{(0)} = 1$.

  We choose $k' = 2^{rk}$. Note that if $V\in \mathcal C_{k'}$, then there is an
  ordering of the elements of $V$ as $i_1,\dotsc,i_{k'}$ such that the
  set $\{i_1,\dotsc,i_\ell\}$ belongs to $\mathcal C_\ell$
  for all $\ell\in [k']$. 
  For every $\ell$, let
  $j_\ell = |\gamma_{i_\ell} \setminus (\gamma_{i_1}\cup \dotsb \cup\gamma_{i_{\ell-1}})|$. 
  Since $|\gamma_i|\leq r$ for all $i$, there are at most $2^{rk}-1$
  edges of $\Gamma$ that are completely contained in 
  $\gamma_{i_1}\cup \dotsb \cup\gamma_{i_k}$.
  Therefore, by our choice of $k'$, at least one of $j_{k+1},\dotsc,j_{k'}$ must be
  nonzero. Since there are at most
  $2^{r\ell}$ choices for the intersection of $\gamma_{i_\ell}$ and
  $\gamma_{i_1}\cup \dotsb \cup\gamma_{i_{\ell-1}}$, it then follows that
  \[ \Delta_{k'}/\Delta_k\leq \sum_{\substack{ 0\leq
  j_{k+1},\dotsc,j_{k'}\leq r\\ j_{k+1}+\dotsb + j_{k'}\geq 1}}
  \prod_{\ell=k+1}^{k'} 2^{r\ell}D^{(j_\ell)}p^{j_\ell}
  \leq K\cdot \max{\{ D(\Gamma,p), D(\Gamma,p)^{k'}\}} \]
  for an appropriate choice of $K$.
\end{proof}

\begin{proof}[Proof of Thm.\ \ref{j:thm:bounded} from Thm.\ \ref{j:thm:sparse}]
  Suppose that $\Gamma(n)=(\Omega(n),\cX(n))$ and $p(n)\in (0,1)$ are as in the
  statement of the theorem. Define
  the sequence $\p(n) = (p_\omega(n))_{\omega\in
  \Omega(n)}$ by $p_\omega(n) = p(n)$ for all $\omega\in \Omega(n)$.
  For every $V\subseteq [N]$,
  we have $|\bigcup_{i\in V}\gamma_i|\leq r|V|$, and so
  \[
    \begin{split}
      \lambda(V)
      &= \sum_{i\in \partial(V)} \e\big[X_i \mid
      \prod_{j\in V} X_j = 1\big]\\
      &\leq 
      2^{r|V|} + 
      \sum_{\emptyset \neq \Omega'\subseteq
      \bigcup_{i\in V}\gamma_i} \max_{j\geq 1} d_j(\Omega')
      p(n)^j\\
      &\leq 2^{r|V|}\big(1+D(\Gamma(n),p(n))\big).
    \end{split}
  \]
  Using our assumption on $D(\Gamma(n),p(n))$, this
  implies $\Lambda_k(\Gamma(n),\p(n))=O(1)$
  for every fixed $k\in \NN$. Since we also assume $p(n)\to 0$, Theorem~\ref{j:thm:sparse}
  implies the first statement of Theorem~\ref{j:thm:bounded}.

  To see the second statement, assume $D(\Gamma(n),p(n))\leq
  |\Omega(n)|^{-\eps}$ for a positive $\eps$.
  By Lemma~\ref{lemma:lambdaibounded}, iterated $2r/\eps$ times, we
  find that there are $k = k(\eps,r)$ and $K=K(\eps,r)$ such that
  $\Delta_{k}\leq K\cdot |\Omega(n)|^{-2r}\cdot \Delta_1$.
  Since $\Delta_1\leq
  |\Omega(n)|^rp(n)$, we obtain $\Delta_k \leq K\cdot |\Omega(n)|^{-r}
  \cdot p(n) = o(1)$.
\end{proof}

\section{Proof of Theorem \ref{j:thm:main}}\label{j:sec:proof}

Let $\Gamma$ and $\p$ be as in the statement of the theorem.
We start the proof by establishing some notational conventions.
Given a subset $V\subseteq [N]$, we use the abbreviations
\[ X_V = \prod_{i\in V} X_i \quad \text{and} \quad \overline X_V = \prod_{i\in
V} (1-X_i). \]
Note that these are the indicator variables for the events `$\gamma_i
\subseteq \Omega_\p$ for all $i\in V$' and `$\gamma_i\nsubseteq \Omega_\p$
for all $i\in V$', respectively. Besides being positively correlated
by Harris's inequality, the
variables $X_V$ satisfy the stronger FKG lattice condition
\begin{equation}
  \label{j:eq:lattice}
  \e[X_U]\e[X_V] \leq \e[X_{U\cup V}]\e[X_{U\cap V}] \quad \text{for all}\quad U,V \subseteq \cX.
\end{equation}
To see that this is true,
rewrite~\eqref{j:eq:lattice} using
 $\e[X_W] = \prod_{\omega\in \bigcup W} p_\omega$, take logarithms of
both sides, and note that
\[
  \begin{split}
    &\sum_{\omega\in \bigcup_{i\in U\cup V}\gamma_i} \log p_\omega\\
    ={} & \sum_{\omega\in \bigcup_{i\in U}\gamma_i} \log p_\omega +
    \sum_{\omega\in \bigcup_{i\in V}\gamma_i} \log p_\omega
    -
    \sum_{\omega\in (\bigcup_{i\in U}\gamma_i) \cap (\bigcup_{i\in V}\gamma_i)} \log p_\omega\\
    \geq{} &
    \sum_{\omega\in \bigcup_{i\in U}\gamma_i} \log p_\omega +
    \sum_{\omega\in \bigcup_{i\in V}\gamma_i} \log p_\omega 
    -
    \sum_{\omega\in \bigcup_{i\in U\cap V}\gamma_i} \log p_\omega,
  \end{split}
\]
since $\log p_\omega<0$ for all $\omega\in\Omega$ and $\bigcup_{i\in U\cap V}\gamma_i \subseteq
\bigcup_{i\in U}\gamma_i \cap \bigcup_{i\in V}\gamma_i$.

We will also use the notation
\[
  \mu_\pi = \prod_{P\in \pi} \e[X_P]
\]
whenever $\pi$ is a set of subsets of $[N]$ (usually a partition of some
subset of~$[N]$). Thus for a non-empty subset $V\subseteq [N]$, the value
\begin{equation}
  \label{eq:kappa'}
  \kappa(V) = \sum_{\pi\in \Pi(V)} (-1)^{|\pi|-1} (|\pi|-1)! \mu_\pi
\end{equation}
is the joint cumulant of $\{X_i : i\in V\}$. For the sake of brevity, we
will from now on write $\kappa(V)$ instead of the more cumbersome $\kappa(\{X_i: i\in V\})$.

Recall that we denote by $\partial(V)$ the external neighbourhood of
$V$ in the dependency graph, that is,
\[
  \partial(V) = N_{G_\Gamma}(V)\setminus V
\]
for every non-empty subset $V\subseteq [N]$.
We define
\begin{equation}
  \label{eq:rhoV}
  \rho_V = \Pr[\text{$X_i=1$ for some $i\in V \cup \partial(V)$}],
\end{equation}
so that $\rho_{k+1} = \max{\{\rho_V : V\subseteq [N]\text{ and } 1\leq |V|\leq k+1\}}$.
Moreover, we set
\[
  I(V) = [N]\setminus (V\cup \partial(V)).
\]
Neglecting the distinction between an index $i$ and the variable $X_i$, we
may say that $\partial(V)$ contains the variables outside of $V$ that are dependent on
$V$ and $I(V)$ contains those that are independent of $V$. 
As above, we write
$\mathcal C_i$ for the collection of all $i$-element sets $V\subseteq [N]$ such
that $G_\Gamma[V]$ is connected.
 We will also write $\mathcal
C_i(\ell)$ for the subset of $\mathcal C_i$ comprising all $A\in \mathcal C_i$ with $\max A = \ell$.

Assume that $k\in \NN$ and $\eps>0$ are
such that $\rho_{k+1} \leq 1-\eps$. 
Note that this implies, in particular, that
$\e[X_i]\leq 1-\eps$ for all $i\in [N]$.
Then we need to show that, for some $K = K(k,\eps)$,
\[
  \Big|\log \Pr[X=0] + \sum_{i\in [k]}
  (-1)^{i+1} \kappa_i\Big| \leq K \cdot (\delta_{1,K} + \Delta_{k+1,K}),
\]
where 
\[
  \delta_{1,K} = \sum_{i=1}^K \delta_i
  \quad\text{and}\quad
  \Delta_{k+1,K} = \sum_{i=k+1}^K \Delta_i.
\]
To do so, we first write out the probability that $X = 0$ using the chain rule:
\[
  \Pr[X=0] = \prod_{\ell\in [N]} \Pr[X_\ell = 0\mid \overline X_{[\ell-1]}=1] =
  \prod_{\ell\in [N]}\left(1-\e[X_\ell\mid \ol X_{[\ell-1]}=1]\right).
\]
Note that by the Harris inequality, $\e[X_\ell \mid \ol X_{[\ell-1]}=1] \le \e[X_\ell]
\leq 1-\eps$ .
Taking logarithms of both sides of the above equality and using the fact that $|\log (1-x) + x| \leq x^2/\eps$ for $x\in [0, 1-\eps]$, we get
\[
  \Big|\log\Pr[X=0] + \sum_{\ell\in [N]} \e[X_\ell\mid \ol X_{[\ell-1]} = 1]\Big|
  \leq \sum_{\ell\in [N]} \e[X_\ell\mid \ol X_{[\ell-1]} = 1]^2/\eps.
\]
Hence, using again
$\e[X_\ell \mid \ol X_{[\ell-1]}=1]\leq \e[X_\ell]$,
\begin{equation}
  \label{eq:step1}
  \Big|\log\Pr[X=0] + \sum_{\ell\in [N]} \e[X_\ell\mid \ol X_{[\ell-1]} = 1]\Big|
  \leq \sum_{\ell \in [N]} \e[X_\ell]^2 / \eps = \delta_1/\eps.
\end{equation}
Thus, our main goal becomes estimating the sum
\begin{equation}
  \label{eq:sum}
  \sum_{\ell\in[N]}\e[X_\ell\mid \ol X_{[\ell-1]}=1].
\end{equation}
We shall do this by approximating~\eqref{eq:sum} by an expression involving
the quantities
\begin{equation}
  \label{eq:defq}
  q(V,S) = \frac{(-1)^{|V|-1} \e[X_V]}{\e[\overline X_{S \setminus I(V)}\mid \overline X_{S\cap I(V)}=1]}.
\end{equation}
This ratio is well-defined for all $V, S \subseteq [N]$ because
\[
  \e[\ol X_{S \setminus I(V)} \mid \ol X_{S\cap I(V)}=1] \geq \e[\ol X_{S \setminus I(V)}] > 0,
\]
which is a consequence of the Harris inequality and the assumption that $p_\omega< 1$ for all $\omega\in \Omega$.
The relationship between~\eqref{eq:sum} and~\eqref{eq:defq} is made precise in the following lemma:

\begin{lemma}
  \label{lemma:top1}
  Let $k\in \NN$ and $\eps>0$ be such that $\rho_{k+1}\leq 1-\eps$. Then
  \[
    \Big|\sum_{\ell\in [N]} \e[X_\ell\mid \overline X_{[\ell-1]}=1] -
    \sum_{\ell\in [N]}\sum_{i\in [k]} \sum_{V\in \mathcal C_i(\ell)}
    q(V,[\ell-1])\Big| \leq 
    \Delta_{k+1}/\eps.
  \]
\end{lemma}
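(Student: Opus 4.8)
The plan is to expand each conditional expectation $\e[X_\ell \mid \ol X_{[\ell-1]} = 1]$ as a sum over connected sets $V$ containing $\ell$, controlling the tail of the expansion by $\Delta_{k+1}$. Fix $\ell \in [N]$. The key observation is that we can write $\e[X_\ell \mid \ol X_{[\ell-1]} = 1] = \e[X_\ell \ol X_{[\ell-1]}]/\e[\ol X_{[\ell-1]}]$, and then the numerator naturally decomposes according to which subset of the neighbours of $\ell$ in $G_\Gamma[\{1,\dots,\ell\}]$ is ``responsible'' for the correlation. More precisely, I would expand $\ol X_{[\ell-1]} = \prod_{j \in [\ell-1]}(1 - X_j)$ by separating the factors indexed by $\partial(\{\ell\}) \cap [\ell-1]$ (the dependent ones) from those indexed by $I(\{\ell\}) \cap [\ell-1]$ (the independent ones). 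Iterating this — each time a factor $(1-X_j)$ is ``opened up'' to its $-X_j$ term, the index $j$ joins the growing cluster and brings in its own neighbours — produces an inclusion–exclusion-type expansion in which the order-$i$ term is precisely $\sum_{V \in \mathcal C_i(\ell)} q(V,[\ell-1])$, where the denominator $\e[\ol X_{S \setminus I(V)} \mid \ol X_{S \cap I(V)} = 1]$ in the definition~\eqref{eq:defq} of $q(V,S)$ records the conditioning on the part of $[\ell-1]$ that remains independent of the current cluster $V$.

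The second step is to truncate this expansion at level $k$ and bound the remainder. For this I would show that the sum of the absolute values of all terms of order $i \geq k+1$ coming from index $\ell$ is bounded by a constant (depending on $\eps$) times $\sum_{i \geq k+1} \sum_{V \in \mathcal C_i(\ell)} \Delta(\{X_j : j \in V\})$. The main tool here is the Harris inequality, which gives $\e[\ol X_{S \setminus I(V)} \mid \ol X_{S \cap I(V)} = 1] \geq \e[\ol X_{S \setminus I(V)}] \geq \prod_{j}(1 - \e[X_j])$; combined with the hypothesis $\rho_{k+1} \leq 1 - \eps$, which controls $\Pr[X_j = 1 \text{ for some } j \in V \cup \partial(V)]$ and hence keeps these denominators bounded away from $0$ uniformly over the relevant $V$, each $|q(V,[\ell-1])|$ is at most $\e[X_V]/(1-\rho_V)$ times something, i.e.\ $O_\eps(\e[X_V])$. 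Summing over $\ell$ and over $V \in \mathcal C_i(\ell)$ for $i \geq k+1$, and noting $\sum_\ell \sum_{V \in \mathcal C_i(\ell)} \e[X_V] = \Delta_i$, yields the bound $\Delta_{k+1}/\eps$ after absorbing constants — or, more honestly, one gets $O(\sum_{i \geq k+1}\Delta_i)$, and a geometric-decay argument (of the type used in Lemma~\ref{lemma:lambdaibounded}, where $\Lambda_k = O(1)$ is implied by $\rho_{k+1} \leq 1-\eps$) is needed to collapse this to $\Delta_{k+1}/\eps$.

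The hard part will be making the combinatorial bookkeeping of the iterated expansion precise: one must check that opening up the factors $(1-X_j)$ in the right order produces exactly the connected sets $V$ with $\max V = \ell$ and exactly the conditioning pattern encoded in~\eqref{eq:defq}, with the correct signs $(-1)^{|V|-1}$, and that nothing is double-counted. A clean way to organise this is to induct on the cluster size, peeling one vertex at a time and using $I(V) \supseteq I(V')$ for $V \subseteq V'$ together with the independence of $X_{I(V)}$ from $\{X_j : j \in V\}$ to factor the independent part out of both numerator and denominator at each step; the residual dependent part is what gets expanded further. A secondary subtlety is that the expansion is naturally written with denominators $\e[\ol X_{[\ell-1] \setminus I(V)} \mid \ol X_{[\ell-1]\cap I(V)}=1]$ rather than the unconditioned $\e[\ol X_{[\ell-1]}]$ one starts with, so one has to verify that the telescoping of these denominators across successive levels is consistent — this is where the precise form of~\eqref{eq:defq} earns its keep, and I expect the bulk of the technical work to lie here rather than in the analytic estimates.
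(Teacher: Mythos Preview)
Your overall strategy---iteratively opening up the factors $(1-X_j)$ indexed by the boundary of the current cluster, thereby producing an inclusion--exclusion expansion over connected sets $V$ with $\max V = \ell$---is exactly what the paper does; the paper packages this into an auxiliary inequality (Lemma~\ref{lemma:helper}) that, for disjoint $V,S$, gives a one-sided bound on $\e[X_V \mid \ol X_S = 1]$ in terms of the partial sum $\sum_{U \subseteq S,\, U \att V,\, |U| \le k} q(V \cup U, S)$, proved by Bonferroni on the boundary followed by induction. Your bound $|q(V,S)| \le \e[X_V]/(1-\rho_V)$ is also the same one the paper uses.

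The gap is in your remainder estimate. You anticipate obtaining a tail sum $O\big(\sum_{i \ge k+1} \Delta_i\big)$ and then propose collapsing it to $\Delta_{k+1}/\eps$ via a geometric-decay argument using $\Lambda_k = O(1)$. This would not work as stated: the hypothesis of Lemma~\ref{lemma:top1} is \emph{only} $\rho_{k+1} \le 1-\eps$, which does \emph{not} in general imply $\Lambda_k = O(1)$ (nor does it place you in the bounded-edge-size setting of Lemma~\ref{lemma:lambdaibounded}). The point you are missing is that the expansion is not merely a convergent series to be truncated but an \emph{alternating} sequence of one-sided inequalities: Lemma~\ref{lemma:helper} gives, for each $j$, a bound of the form $(-1)^{j}\e[X_\ell \mid \ol X_{[\ell-1]}=1] \le (-1)^{j}\sum_{i \le j}\sum_{V \in \mathcal C_i(\ell)} q(V,[\ell-1])$. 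Invoking this once with $j = k-1$ and once with $j = k$ sandwiches the conditional expectation between two consecutive partial sums, so the error is bounded by the \emph{single} level-$(k+1)$ term $\sum_{V \in \mathcal C_{k+1}(\ell)} |q(V,[\ell-1])|$, which via $\rho_{k+1} \le 1-\eps$ gives precisely $\Delta_{k+1}/\eps$ after summing over $\ell$---no tail sum and no appeal to $\Lambda_k$ needed.
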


We postpone the proof of Lemma~\ref{lemma:top1} to Section~\ref{sec:proof-lemma-top1}
and instead show how it implies the assertion of the theorem. Before we can do this, we need
several additional definitions.

\begin{definition}[Attachment]
  \label{def:att}
  Given subsets $U,V\subseteq [N]$, we say that~$U$ \emph{attaches} to $V$,
  in symbols $U\att V$, if every connected component
  of $G_\Gamma[U\cup V]$ contains a vertex of $V$ (see Figure~\ref{j:fig:attach}).
\end{definition}

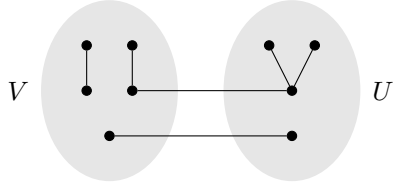
\begin{figure}
  \centering
  \begin{tikzpicture}[scale=1.2]
    \draw[fill=black!10,draw=none] (0,0.5) ellipse (0.75cm and 1cm);
    \draw[fill=black!10,draw=none] (2,0.5) ellipse (0.75cm and 1cm);
    \node (a) at (0.25,0.5) {};
    \node (x) at (-0.25,0.5) {};
    \node (b) at (0,0) {};
    \node (c) at (-0.25,1) {};
    \node (d) at (0.25,1) {};
    \draw (a) -- (d);
    \draw (x) -- (c);
    \node (a1) at (2,0) {};
    \node (b1) at (2,0.5) {};
    \node (c1) at (2-0.25,1) {};
    \node (d1) at (2.25,1) {};
    \draw (d1)--(b1)--(c1);
    \draw (b1)--(a);
    \draw (a1)--(b);
    \node[inner sep=0pt,fill=none,draw=none] at (-1,0.5) {$V$};
    \node[inner sep=0pt,fill=none,draw=none] at (3,0.5) {$U$};
  \end{tikzpicture}
  \caption{The set $U$ attaches to $V$, i.e., $U\att V$, but not vice-versa.}
  \label{j:fig:attach}
\end{figure}

We state the following simple facts for future reference:
\begin{enumerate}[label=(\roman*)]
\item
  We have $\emptyset \att V$ for every $V\subseteq [N]$.
\item
  If $i\in \partial(V)$, then $\{i\}\att V$.
\item
  If $U\att V$ and $W\att V$ then also $U\cup W\att V$.
\item
  If $V\in \mathcal C_{|V|}$ and $U \att V$, then $U\cup V\in \mathcal C_{|U \cup V|}$.
\end{enumerate}

\newcommand{\pic}{\Pi^{\mathsf C}}

\begin{definition}
  Suppose that $\emptyset \neq V\subseteq W\subseteq [N]$. We define
  \[
    \pic_V(W) \subseteq \Pi(W)
  \]
  to be the set of all partitions $\pi$ of $W$ that contain a part $P\in \pi$
  such that $V \subseteq P$ and $V$ is the union of connected components of
  $G_\Gamma[P]$ (see Figure \ref{j:fig:pic}).
\end{definition}

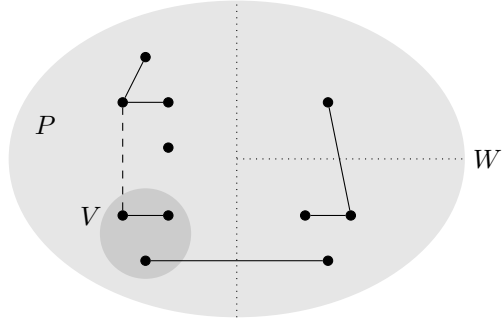
\begin{figure}
  \centering
  \begin{tikzpicture}[scale=1.2]
    \draw[fill=black!10,draw=none] (1,1.125) ellipse (2.5cm and 1.75cm);
    \draw[fill=black!20,draw=none] (0,0.3) circle (0.5cm);
    \node (1) at (0.25,0.5) {};
    \node (2) at (-0.25,0.5) {};
    \node (3) at (0,0) {};
    \node (5) at (0.25,1.25) {};
    \node (6) at (0.25,1.75) {};
    \node (7) at (-0.25,1.75) {};
    \node (8) at (0,2.25) {};
    \draw (1)--(2);
    \draw (6)--(7)--(8);
    \node (a) at (1.75,0.5) {};
    \node (b) at (2.25,0.5) {};
    \node (c) at (2,0) {};
    \node (d) at (2,1.75) {};

    \draw (3)--(c);
    \draw (a)--(b);
    \draw (d)--(b);
    \draw[dashed] (2) -- (7);

    \draw[dotted] (1,1.125-1.75) -- (1,1.125+1.75);
    \draw[dotted] (1,1.125) -- (1+2.5,1.125);
    \node[inner sep=0pt,fill=none,draw=none] at (-0.6,0.5) {$V$};
    \node[inner sep=0pt,fill=none,draw=none] at (-1.1,1.5) {$P$};
    \node[inner sep=0pt,fill=none,draw=none] at (3.75,1.125) {$W$};
  \end{tikzpicture}
  \caption{A partition in $\pic_V(W)$. Note that $V$ is the union of components of the subgraph induced by the part $P$ containing it. If the dashed edge were in $G_\Gamma$, then the partition would no longer be in $\pic_V(W)$.}
  \label{j:fig:pic}
\end{figure}

Next, for $\emptyset\neq V\subseteq W\subseteq [N]$, we define
\begin{equation}\label{eq:kappav}
  \kappa_V(W) = \sum_{\pi\in \pic_V(W)} (-1)^{|\pi|-1}(|\pi|-1)!\mu_\pi.
\end{equation}
Note that this is very similar to the definition \eqref{eq:kappa'} of $\kappa(W)$, 
except that we sum over $\pic_V(W)$ instead of $\Pi(W)$.
For every $k \in\NN$ and all $V,S\subseteq [N]$ with $V\neq \emptyset$, we set
\begin{equation}
  \label{eq:kappavk}
  \kappa^{(k)}_V(S) = \sum_{\substack{V\subseteq W\subseteq V\cup S\\ W \att V\\ |W| \leq k}}(-1)^{|W|-1}\kappa_V(W).
\end{equation}
Certainly, this is a very complicated definition, whose meaning
is far from clear at this point.
However, it serves as a convenient `bridge'
between $q(V,[\ell-1])$ and the values $\kappa_i$, as shown by the following two lemmas:

\begin{lemma}
  \label{lemma:top2}
  Let $k\in \NN$ and $\eps>0$ be such that $\rho_{k+1}\leq 1-\eps$. Then there is some
  $K=K(k,\eps)$ such that
  \[
    \Big| \sum_{\ell\in [N]}\sum_{i\in [k]} \sum_{V\in \mathcal C_i(\ell)} \big(
  q(V,[\ell-1]) - \kappa^{(k)}_V([\ell-1])\big)\Big| \leq  K \cdot (\delta_{1,K} + \Delta_{k+1,K}).
\]
\end{lemma}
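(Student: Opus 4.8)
The plan is to expand $q(V,S)$ into an absolutely convergent series of products of moments $\e[X_A]$, reorganise it by the \emph{footprint} of each summand, identify the main part with $\kappa^{(k)}_V(S)$ via Möbius inversion on the partition lattice, and bound the remainder by $\delta$- and $\Delta$-quantities. First I would record the identity
\[
  q(V,S)=(-1)^{|V|-1}\e[X_V]\cdot\frac{\e[\overline X_{S\cap I(V)}]}{\e[\overline X_S]}=\frac{(-1)^{|V|-1}\e[X_V]}{\e[\overline X_{T}\mid\overline X_{S'}=1]},
\]
with $T=S\setminus I(V)=S\cap(V\cup\partial(V))$ and $S'=S\cap I(V)$, which follows from \eqref{eq:defq} on multiplying through by $\e[\overline X_{S'}]$ and using $(S\setminus I(V))\sqcup(S\cap I(V))=S$. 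Since $\rho_{k+1}\le 1-\eps$, Harris's inequality gives $\e[\overline X_T\mid\overline X_{S'}=1]=1-r_V$ with $0\le r_V\le\rho_V\le 1-\eps$, so $1/(1-r_V)=\sum_{m\ge0}r_V^m$ converges. Expanding $r_V$ by inclusion--exclusion as $r_V=\sum_{\emptyset\ne U\subseteq T}(-1)^{|U|-1}\e[X_U\mid\overline X_{S'}=1]$, then each conditional moment as $\e[X_U\mid\overline X_{S'}=1]=\e[X_U\overline X_{S'}]/\e[\overline X_{S'}]$ with a further inclusion--exclusion in the numerator and the Harris-bounded denominator $1/\e[\overline X_{S'}]$, one obtains $q(V,S)$ as a convergent multilinear series in the numbers $\e[X_A]$, $A\subseteq V\cup S$.

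The main step is to reorganise this series by the footprint $W\subseteq V\cup S$ of a monomial --- the union of $V$ with all index sets occurring in it --- and to prove the \emph{key identity}: the aggregate coefficient of the minimal footprint-$W$ monomials is $(-1)^{|W|-1}\kappa_V(W)$ whenever $V\subseteq W$ and $W\att V$, and is $0$ otherwise. This is a Möbius computation based on $\kappa(W)=\sum_{\pi\in\Pi(W)}(-1)^{|\pi|-1}(|\pi|-1)!\,\mu_\pi$: the factor $\e[X_V]$ forces $V$ inside a single block $P$; the chain of conditional moments over $T$ forces the rest of $P$ to be joined to $V$ through $G_\Gamma$, whereas the indices of $W\setminus P$ enter freely through the expansion of $\e[\overline X_{S'}]$, so the surviving partitions are exactly those in $\pic_V(W)$; footprints not connected to $V$ cancel by Proposition~\ref{prop:cumulant-ind}, and facts (i)--(iv) ensure every surviving footprint is a connected set containing $V$, hence belongs to $\mathcal C_{|W|}$. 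Truncating the footprint at $|W|\le k$ leaves exactly $\kappa^{(k)}_V(S)$.

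It then remains to bound $q(V,S)-\kappa^{(k)}_V(S)$ and sum over $\ell\in[N]$, $i\in[k]$ and $V\in\mathcal C_i(\ell)$, reindexing by the (connected) footprint $W$ so that only $O(1)$ triples $(\ell,i,V)$, with implied constant depending only on $k$, contribute to a fixed $W$. There are three kinds of leftover monomials. (a) Non-minimal footprint-$W$ monomials (from $r_V^m$ with $m$ large, or from a repeated index) together with the extra monomials arising when conditional moments are replaced by unconditional ones: each is a product of at least two moments of index sets inside $W$, hence at most a constant (depending on $k$) times $\e[X_W]\cdot\max\{\e[X_i]:i\in W\}=\delta(\{X_i:i\in W\})$; these sum to $O(\delta_{1,K})$ over footprints of size at most $K$. (b) Minimal footprint-$W$ monomials with $k<|W|\le K$: since $\mu_\pi\le\e[X_W]$ by Harris one has $|\kappa_V(W)|\le C_{|W|}\e[X_W]$ (as in the remark after \eqref{eq:kappadelta}), so these sum to $O(\sum_{j=k+1}^{K}\Delta_j)=O(\Delta_{k+1,K})$. (c) All monomials of footprint size exceeding $K$: here a term-by-term bound is too weak, and one must use the geometric decay $r_V\le 1-\eps$, together with the cancellation encoded in $\kappa^{(k)}_V$, to show that once $K=K(k,\eps)$ is large enough this tail is dominated by, say, $\Delta_{k+1}$. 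Altogether $|q(V,S)-\kappa^{(k)}_V(S)|$, summed over the triple index, is at most $K\cdot(\delta_{1,K}+\Delta_{k+1,K})$ for a suitable $K=K(k,\eps)$.

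I expect the two difficult points to be, first, the key identity of the second paragraph: pinning down which collections of index subsets of $T$ and $S'$ combine to a given footprint $W$, and checking that after all cancellations their signed sum is $(-1)^{|W|-1}\kappa_V(W)$ with the block structure of $\pic_V(W)$ rather than of the full $\Pi(W)$; a workable route is to prove it first in an auxiliary model in which all conditional moments factor --- reducing it to the classical moment--cumulant inversion --- and then transfer to the general case, the discrepancy being exactly the $\delta$-type monomials of type (a). Second, the tail of large footprints in (c): since $|\kappa_V(W)|$ may grow like $|W|!\cdot\e[X_W]$ and $\Delta_j$ need not decay in $j$ under the hypotheses of Theorem~\ref{j:thm:main}, the control there must genuinely come from the geometric factor $r_V\le 1-\eps$ rather than from a summand-wise comparison, and one must choose the threshold $K$ carefully so that it depends only on $k$ and $\eps$.
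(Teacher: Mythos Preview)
Your plan is genuinely different from the paper's and the broad architecture is sensible, but there is a real gap in step (c) that I do not see how to close within your framework. You want to fully expand $q(V,S)$ into a multilinear series in the moments $\e[X_A]$, organise by footprint, and then control the tail of footprints larger than $K$ by appealing to the geometric decay $r_V\le 1-\eps$. The difficulty is that the geometric bound applies to $r_V$ \emph{before} it is expanded by inclusion--exclusion; once you write $r_V=\sum_{\emptyset\neq U\subseteq T}(-1)^{|U|-1}\e[X_U\mid\overline X_{S'}=1]$ and multiply out $r_V^m$, the resulting sum of absolute values can be astronomically larger than $r_V^m$ itself, so there is no termwise control on monomials with large footprint. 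You acknowledge this (``the control there must genuinely come from the geometric factor\dots\ rather than from a summand-wise comparison''), but the two desiderata pull in opposite directions: to use the geometric decay you must \emph{not} expand, while to recognise $\kappa^{(k)}_V$ via your key identity you must expand. Without a concrete mechanism for reconciling these, step (c) is a gap, not a technicality --- and since under the hypotheses of Theorem~\ref{j:thm:main} the $\Delta_j$ need not decay in $j$, you cannot hope to absorb the tail into $\Delta_{k+1,K}$ by choosing $K$ large.

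The paper sidesteps this entirely by never fully expanding. Instead it introduces two recursive approximants $q^{(k)}(V,S)$ and $\tilde\kappa^{(k)}_V(S)$, each built from the corresponding object on strictly smaller $k$ (equations \eqref{eq:kappatilde}--\eqref{eq:qk}), and proves $q\approx q^{(k)}$ (Lemma~\ref{lemma:qrec}) and $\kappa^{(k)}\approx\tilde\kappa^{(k)}$ (Lemma~\ref{lemma:kappavkrec}) separately. The first of these uses Bonferroni-type truncations (via Lemma~\ref{lemma:helper}) rather than full inclusion--exclusion, so at every stage the error is a single signed tail term that one bounds directly --- no infinite expansion is ever written down. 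The combinatorial heart, which plays the role of your ``key identity'', is the recursive formula for $\kappa_V(W)$ in Lemma~\ref{lemma:kappavrec}; it is proved by a bijective argument on partitions (Lemma~\ref{lemma:kappav'} plus a careful reindexing), not by reduction to an auxiliary factoring model as you suggest. The final comparison $q\approx\kappa^{(k)}$ (Lemma~\ref{lemma:bound}) then follows by induction on $k$. If you want to salvage your approach, the natural fix is to truncate $\sum_m r_V^m$ at a finite stage and bound the remainder $r_V^{k-|V|+1}/(1-r_V)$ directly --- but doing so and then iterating on the truncated piece is precisely the paper's recursion.
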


\begin{lemma}\label{lemma:top3}
  For every $k\in \NN$, we have
  \[ 
    \sum_{\ell\in [N]}\sum_{i\in [k]}\sum_{V\in \mathcal C_i(\ell)}
    \kappa^{(k)}_{V}([\ell-1]) = \sum_{i\in [k]} (-1)^{i+1}\kappa_i.
  \]
\end{lemma}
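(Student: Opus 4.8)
The plan is to prove the identity by reorganizing the triple sum on the left-hand side according to the ``large cluster'' that is being built, rather than according to the anchor set $V$. Concretely, after unfolding the definition~\eqref{eq:kappavk} of $\kappa^{(k)}_V([\ell-1])$, the left-hand side becomes a sum over tuples $(\ell, V, W, \pi)$ where $\ell \in [N]$, $V \in \mathcal C_i(\ell)$ for some $i \in [k]$, $W$ satisfies $V \subseteq W \subseteq V \cup [\ell-1]$ with $W \att V$ and $|W| \leq k$, and $\pi \in \pic_V(W)$, weighted by $(-1)^{|W|-1}(-1)^{|\pi|-1}(|\pi|-1)!\,\mu_\pi$. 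The key observation is that the conditions $V \subseteq W \subseteq V \cup [\ell-1]$, $\max V = \ell$, and $W \att V$ together say exactly that $W$ is a set with $\max W = \ell$ (since $W \setminus V \subseteq [\ell-1]$ and $\ell \in V$), that $G_\Gamma[W]$ is connected (by fact~(iv), since $V$ is connected and $W \att V$), and that $V$ is a union of connected components of $G_\Gamma[P]$ for the distinguished part $P \ni V$ of $\pi$. So I would first sum over the resulting connected set $W$ (with $\max W = \ell$ forcing $\ell = \max W$, which eliminates the $\ell$-sum), and then ask: for a fixed connected $W$ with $|W| = m \leq k$ and a fixed partition $\pi \in \Pi(W)$, how many anchor sets $V$ are counted, and with what sign?

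First I would fix $W \in \mathcal C_m$ for $m \in [k]$ and a partition $\pi \in \Pi(W)$, and determine the set of valid $V$. For $\pi$ to lie in $\pic_V(W)$, $V$ must be contained in a single part $P \in \pi$ and be a union of connected components of $G_\Gamma[P]$; moreover $V$ must itself be connected (since $V \in \mathcal C_i$), so $V$ must be a \emph{single} connected component of $G_\Gamma[P]$; and $W \att V$ must hold, which by fact~(iv) combined with connectedness of $W$ is automatic, but more importantly forces $V$ to be chosen so that every component of $G_\Gamma[W]$ — there is only one, namely $W$ — contains a vertex of $V$, which is automatic. Wait, I need to be careful: $W \att V$ with $G_\Gamma[W \cup V] = G_\Gamma[W]$ connected means the single component contains a vertex of $V$, which holds. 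So the constraint is simply: $V$ is one connected component of $G_\Gamma[P]$ for some part $P$ of $\pi$, subject additionally to $\max V = \ell = \max W$ (but since we have absorbed the $\ell$-sum, this becomes $\max V = \max W$), and $|V| \leq k$ which is automatic as $|V| \leq |W| \leq k$. Thus for a fixed $W$ and $\pi$, the contribution is $(-1)^{|W|-1}(-1)^{|\pi|-1}(|\pi|-1)!\,\mu_\pi$ times the number of connected components $V$ of parts of $\pi$ with $\max V = \max W$.

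The main obstacle — and the heart of the argument — is the combinatorial identity showing that, after these manipulations, $\sum_{\ell}\sum_i \sum_{V \in \mathcal C_i(\ell)} \kappa^{(k)}_V([\ell-1])$ collapses to $\sum_{i \in [k]} (-1)^{i+1}\kappa_i = \sum_{i\in[k]}(-1)^{i+1}\sum_{W \in \mathcal C_i}\kappa(W)$. Recalling $\kappa(W) = \sum_{\pi \in \Pi(W)}(-1)^{|\pi|-1}(|\pi|-1)!\,\mu_\pi$, it suffices to prove, for each fixed connected $W$ with $2 \leq |W| \leq k$ (the case $|W|=1$ being immediate), the identity
\[
  \sum_{\pi \in \Pi(W)} (-1)^{|\pi|-1}(|\pi|-1)!\,\mu_\pi \cdot \#\{\text{components $V$ of parts of $\pi$ with $\max V = \max W$}\} = (-1)^{|W|-1}\sum_{\pi\in\Pi(W)}(-1)^{|\pi|-1}(|\pi|-1)!\,\mu_\pi,
\]
after matching the sign $(-1)^{|W|-1}$. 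Equivalently, writing $\ell^* = \max W$, one must show $\sum_{\pi} (-1)^{|\pi|-1}(|\pi|-1)!\,\mu_\pi\,(c_\pi - 1) = 0$ where $c_\pi$ counts components $V$ of parts of $\pi$ containing $\ell^*$ — but $\ell^*$ lies in exactly one part $P^*$ and in exactly one component of $G_\Gamma[P^*]$, so in fact $c_\pi = 1$ \emph{always}, and the identity is trivial once the bijection between the two sides is set up correctly! So the real work is purely bookkeeping: carefully checking that the map $(\ell, V, W, \pi) \mapsto (W, \pi)$ is a bijection onto $\{(W,\pi) : W \in \mathcal C_m, m \leq k, \pi \in \Pi(W)\}$ with the unique preimage having $V$ = the component of the $\pi$-part containing $\max W$, that all side conditions ($W \att V$, $V \subseteq W \subseteq V \cup [\ell-1]$, $|W| \leq k$, connectedness of $V$) are correctly accounted for, and that signs match. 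I would organize this as: (1) expand the left side into a sum over $(\ell, i, V, W, \pi)$; (2) observe $\ell = \max W$ and fold away $\ell$; (3) for fixed $(W, \pi)$, show there is a unique admissible $V$; (4) conclude the left side equals $\sum_{m\in[k]}\sum_{W \in \mathcal C_m}(-1)^{|W|-1}\kappa(W) = \sum_{i\in[k]}(-1)^{i+1}\kappa_i$. Step~(3) is where one must be most vigilant about the interplay of the definitions of $\pic_V(W)$, $\att$, and $\mathcal C_i(\ell)$.
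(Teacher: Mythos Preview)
Your proposal is correct and follows essentially the same approach as the paper's own proof. The paper first switches the order of summation to sum over connected $W$ with $\max W = \ell$ and then over $V \in \mathcal C_W$ (connected $V \subseteq W$ with $\max V = \max W$), and then proves $\kappa(W) = \sum_{V \in \mathcal C_W} \kappa_V(W)$ by showing that $\{\pic_V(W) : V \in \mathcal C_W\}$ is a partition of $\Pi(W)$; your argument that for each fixed $(W,\pi)$ there is a unique admissible $V$ (namely the connected component of $\max W$ in its $\pi$-part) is exactly this partition statement, just phrased at the level of individual pairs $(W,\pi)$ rather than as a decomposition of $\Pi(W)$.
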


Theorem~\ref{j:thm:main} is an easy consequence of
Lemmas~\ref{lemma:top1}, \ref{lemma:top2}, and~\ref{lemma:top3}. Indeed, 
assume $k\in \NN$ and $\eps>0$ are such that $\rho_{k+1}\leq 1-\eps$. It follows
from~\eqref{eq:step1}, the above three lemmas, and the triangle inequality
that
\[
  \Big|\log \Pr[X=0] + \sum_{i\in [k]} (-1)^{i+1} \kappa_i
  \Big| \leq \delta_1/\eps + \Delta_{k+1}/\eps + K' \cdot (\delta_{1,K'} + \Delta_{k+1,K'})
\]
for some $K' = K'(k,\eps)$. The assertion of the theorem now follows simply by observing that the right-hand side above is at most $K \cdot (\delta_{1,K} + \Delta_{k+1,K})$ for $K = K' + 1/\eps$.

\subsection{Proof of Lemma~\ref{lemma:top1}}
\label{sec:proof-lemma-top1}

We derive Lemma~\ref{lemma:top1} from the following auxiliary lemma,
which will also be used in the proof of Lemma~\ref{lemma:top2}.

\begin{lemma}
  \label{lemma:helper}
  Assume that $V, S \subseteq [N]$ are disjoint. Then for every 
  non-negative integer $k$,
  \begin{equation}
    \label{eq:helper}
    (-1)^k \cdot \e[X_V \mid \ol X_S=1] \leq (-1)^{k+|V|-1}
    \sum_{\substack{U\subseteq S, U\att V \\ |U|\leq k}} q(V\cup U, S).
  \end{equation}
\end{lemma}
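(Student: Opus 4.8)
The plan is to fix $V$ and induct on $k$. For the base case $k = 0$, the only $U \subseteq S$ with $|U| \le 0$ and $U \att V$ is $U = \emptyset$ (which attaches to $V$ trivially by fact~(i)), so the right-hand side is $(-1)^{|V|-1} q(V, S)$. Unwinding the definition~\eqref{eq:defq}, this equals
\[
  (-1)^{|V|-1}\cdot \frac{(-1)^{|V|-1}\e[X_V]}{\e[\ol X_{S\setminus I(V)}\mid \ol X_{S\cap I(V)}=1]} = \frac{\e[X_V]}{\e[\ol X_{S\setminus I(V)}\mid \ol X_{S\cap I(V)}=1]}.
\]
Since $V$ and $S$ are disjoint, $S \cap I(V)$ consists of exactly the indices in $S$ that are independent of $V$, while $S \setminus I(V) = S \cap \partial(V)$. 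Using independence of $X_V$ from $\{X_i : i \in S\cap I(V)\}$ and then applying the Harris/FKG inequality to the non-increasing variables $X_V$ and $\ol X_{S\cap\partial(V)}$ in the conditional space, one checks that $\e[X_V \mid \ol X_S = 1] \le \e[X_V]/\e[\ol X_{S\setminus I(V)}\mid \ol X_{S\cap I(V)}=1]$, which is exactly the $k=0$ case. (This is essentially the standard Boppana--Spencer style estimate used in proofs of Janson's inequality.)

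For the inductive step, I would pass from the conditioning event $\ol X_S = 1$ to $\ol X_{S\setminus\{s\}}=1$ for a well-chosen $s \in S$ by writing $\ol X_s = 1 - X_s$ and expanding. The key identity should take the shape
\[
  \e[X_V \mid \ol X_S = 1]\cdot \Pr[\ol X_s = 1 \mid \ol X_{S\setminus\{s\}}=1] = \e[X_V \mid \ol X_{S\setminus\{s\}}=1] - \e[X_V X_s \mid \ol X_{S\setminus\{s\}}=1],
\]
after which $X_V X_s$ is rewritten as $X_{V\cup\{s\}}$ and the two resulting conditional expectations are fed into the induction hypothesis (the first with parameter $k$, the second with parameter $k-1$, over the smaller conditioning set $S' = S\setminus\{s\}$). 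The alternating sign $(-1)^k$ is exactly what makes the two-sided cancellation of the $q$-terms telescope correctly: the terms $q(V\cup U, S')$ with $s \notin U$ recombine, after accounting for the denominator's dependence on $s$, into $q(V\cup U, S)$, while the terms with $s \in U$ come from the $X_{V\cup\{s\}}$ branch. One must also verify that $U \att V$ with $U \subseteq S'$, together with possibly adjoining $s$, produces exactly the sets $U' \subseteq S$ with $U' \att V$ and the right size bound — here facts~(ii) and~(iii) about attachment are used: $\{s\}\att V$ whenever $s$ lies in a component of $G_\Gamma[V\cup U']$ not already anchored in $V$, so the only new index one ever needs to add is one that restores the attachment condition.

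The main obstacle, I expect, is bookkeeping the denominators: $q(V\cup U, S)$ and $q(V\cup U, S')$ have different denominators because $I(V\cup U)$ changes when $s$ is added to or removed from the conditioning set, and because whether $s \in I(V\cup U)$ or $s \in \partial(V\cup U)$ depends on the combinatorics of $G_\Gamma$. So rather than choosing $s$ arbitrarily, I would choose $s \in S$ to be (if possible) an element of $S \cap \partial(V\cup U)$ uniformly across the relevant $U$, or more likely split the sum over $U$ according to whether $s\in\partial(V\cup U)$, $s\in V\cup U$, or $s\in I(V\cup U)$, and handle the third case trivially (there $X_s$ drops out by independence and the $q$-term is unchanged). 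Getting the inequality direction right in each branch — remembering that Harris gives $\e[X_V\mid\ol X_S=1]\le\e[X_V]$ but the finer FKG step~\eqref{j:eq:lattice} is what controls the denominators — will require care, but no deep new idea beyond the monotonicity already set up before the lemma statement.
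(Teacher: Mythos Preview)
Your route is genuinely different from the paper's. The paper first clears denominators by multiplying through by $\Pr[\ol X_S=1]$, reducing~\eqref{eq:helper} to the unconditional form
\[
(-1)^k\,\e[X_V\ol X_S]\;\le\;\sum_{\substack{U\subseteq S,\,U\att V\\|U|\le k}}(-1)^{k+|U|}\,\e[X_{V\cup U}]\,\e\big[\ol X_{S\cap I(V\cup U)}\big],
\]
and then inducts purely on $k$: a single application of the Bonferroni inequalities to $\ol X_{S\cap\partial(V)}$ peels off \emph{all} of $S\cap\partial(V)$ at once, producing subsets $U'\subseteq S\cap\partial(V)$, and for each nonempty $U'$ one invokes the hypothesis with $k\leftarrow k-|U'|$, $V\leftarrow V\cup U'$, $S\leftarrow S\cap I(V)$. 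The decomposition $U=(U\cap\partial(V))\cup(U\cap I(V))$ then reassembles the sum. This avoids the one-vertex-at-a-time bookkeeping entirely.

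Your approach can be made to work, but as written it has two gaps. First, the induction cannot be on $k$: you feed $\e[X_V\mid\ol X_{S'}=1]$ back into the hypothesis with the \emph{same} $k$, which is circular. You must induct on $|S|$ (proving the statement for all $k$ simultaneously), with base case $S=\emptyset$ trivial and the case $S\cap\partial(V)=\emptyset$ handled directly as an equality. Second, the denominator issue you flag as the main obstacle is in fact clean once you commit to $s\in S\cap\partial(V)$: then $s\notin I(V\cup U)$ for every relevant $U$, and writing $S'=S\setminus\{s\}$ one has, uniformly in $U$,
\[
\frac{q(V\cup U,S')}{q(V\cup U,S)}
=\frac{\e[\ol X_{S\setminus I(V\cup U)}\mid\ol X_{S\cap I(V\cup U)}=1]}{\e[\ol X_{S'\setminus I(V\cup U)}\mid\ol X_{S'\cap I(V\cup U)}=1]}
=\Pr\big[\ol X_s=1\mid\ol X_{S'}=1\big],
\]
so the conditioning factor in your identity cancels exactly against the ratio of $q$'s. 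The proposed three-way case split on the position of $s$ is unnecessary, since the third case $s\in I(V\cup U)$ never occurs. (A minor aside: the base case needs no Harris---just $\ol X_S\le\ol X_{S\cap I(V)}$ and the independence of $X_V$ from $\ol X_{S\cap I(V)}$.)
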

\begin{proof}
  We claim that it suffices to prove that for every integer $k \ge 0$,
  \begin{equation}
    \label{eq:helper-alt}
    (-1)^k \cdot \e\big[X_V\ol X_S\big] \leq \sum_{\substack{U\subseteq S, 
    U\att V\\0\leq |U|\leq k}}(-1)^{k+|U|}
    \e\big[X_{V\cup U}\big]\e\left[\ol X_{S\cap I(V\cup U)}\right].
  \end{equation}
  Indeed, \eqref{eq:helper-alt} implies~\eqref{eq:helper} because
  \[
    \e\big[\ol X_{S\cap I(V\cup U)}\big] = {\Pr\big[\ol X_S =1\big]} \cdot
    {\e\big[\ol X_{S \setminus I(V\cup U)}\mid\ol X_{S\cap I(V\cup
    U)}=1\big]}^{-1}
  \]
  and because definition \eqref{eq:defq} gives
  \[
    q(V\cup U,S) = \frac{(-1)^{|V|+|U|-1} \e[X_{V\cup U}]}{\e[\overline X_{S \setminus I(V \cup U)} \mid \overline X_{S\cap I(V\cup U)}=1]}.
  \]

  We prove~\eqref{eq:helper-alt} by induction on $k$. When $k=0$, the inequality simplifies to
  \[
    \e[X_V \ol X_S] \leq \e[X_V] \e[\ol X_{S \cap I(V)}],
  \]
  which holds because $\ol X_S \leq \ol X_{S \cap I(V)}$ and because $X_V$ and
  $X_{S \cap I(V)}$ are independent. Assume now that $k\geq1$ and
  that~\eqref{eq:helper-alt} holds for all $k'$ with $0 \leq k' < k$. It
  follows from the Bonferroni inequalities that
  \begin{equation}
    \label{eq:Bonferroni}
    (-1)^k \cdot \ol X_{S \cap \partial(V)} \le (-1)^k \cdot \kern-0.5cm
    \sum_{\substack{U' \subseteq S \cap \partial(V) \\ |U'| \le k}}
    (-1)^{|U'|} X_{U'}.
  \end{equation}
  Since $S$ and $V$ are disjoint and $\partial(V) \cup I(V) = [N]\setminus V$, then multiplying~\eqref{eq:Bonferroni} through by $X_V \ol X_{S \cap I(V)}$ and taking expectations yields
  \begin{equation}
    \label{eq:Bonferroni-two}
    (-1)^k \cdot \e[X_V \ol X_S] \le \sum_{\substack{U' \subseteq S \cap \partial(V) \\ |U'| \le k}} (-1)^{k+|U'|} \e[X_{V \cup U'} \ol X_{S \cap I(V)}]
  \end{equation}
  Observe that for every $U' \subseteq S \cap \partial(V)$, the sets $V \cup U'$ and $S \cap I(V)$ are disjoint.
  In particular, if $U'$ is non-empty, then we may
  appeal to the induction hypothesis (with $k \leftarrow k-|U'|$) to bound each term
  in the right-hand side of~\eqref{eq:Bonferroni-two} as follows.
  As $S\cap I(V)\cap I(V \cup U'\cup U'') = S\cap I(V \cup U'\cup U'')$,
  \begin{multline}
    \label{eq:Bonferroni-substitution}
    (-1)^{k + |U'|} \cdot \e[X_{V\cup U'}\ol X_{S \cap I(V)}] \\
    \leq \sum_{\substack{U''\subseteq S\cap I(V)\\ U''\att V \cup U' \\0\leq |U''|\leq k-|U'|}} (-1)^{k+|U'|+|U''|}
    \e[X_{V \cup U'\cup U''}]\e[\ol X_{S\cap I(V \cup U'\cup U'')}].
  \end{multline}
  Finally, observe that every non-empty $U \subseteq S$ such that $U\att V$ can be
  partitioned into a non-empty $U' \subseteq S \cap \partial(V)$ and an $U'' \subseteq S \cap
  I(V)$ such that $U''\att (V \cup U')$ in a unique way. Indeed, one sets $U' = U \cap \partial(V)$ and $U'' = U \setminus U'$;
  this is the only such partition. Since $\emptyset \att V$ by definition, then bounding
  each term in~\eqref{eq:Bonferroni-two} that corresponds to a non-empty $U'$ using~\eqref{eq:Bonferroni-substitution}
  and rearranging the sum gives~\eqref{eq:helper-alt}.
\end{proof}

\begin{proof}[Proof of Lemma~\ref{lemma:top1}]
  Fix $\ell \in [N]$ and assume $k\in\NN$ and $\eps>0$ are such that $\rho_{k+1} \le 1- \eps$.
  Invoking Lemma~\ref{lemma:helper} with $V = \{\ell\}$ and $S = [\ell-1]$ twice,
  first with $k \leftarrow k-1$ and then with $k \leftarrow k$, to get both an upper
  and a lower bound on $\e[X_\ell \mid \ol X_{[\ell-1]}]$, we obtain
\begin{multline}
    \Big| \e[X_\ell \mid \ol X_{[\ell-1]}=1] - \sum_{\substack{U \subseteq
[\ell-1], U \att \{\ell\} \\ |U| \le k-1}} q(U \cup \{\ell\}, [\ell-1]) \Big|
\\
\le \Big| \sum_{\substack{U \subseteq [\ell-1], U \att \{\ell\} \\ |U| = k}}
q(U \cup \{\ell\}, [\ell-1]) \Big|.
\end{multline}
  Since the sets $U \cup \{\ell\}$ with $U \subseteq [\ell-1]$, $U \att \{\ell\}$, and $|U| = i-1$
  are precisely the elements of $\mathcal C_i(\ell)$, we can rewrite the above inequality as
  \begin{equation}
    \label{eq:X_ell-qVS}
    \Big| \e[X_\ell\mid \ol X_{[\ell-1]}=1] - \sum_{i\in[k]}\sum_{V\in \mathcal C_i(\ell)} q(V,[\ell-1]) \Big|
    \le \sum_{V\in \mathcal C_{k+1}(\ell)} | q(V,[\ell-1]) |.
  \end{equation}
  It follows from definition~\eqref{eq:defq} and Harris's inequality that
  \[
    \begin{split}
      |q(V,S)| & = \frac{\e[X_V]}{\e[\overline X_{S \setminus I(V)} \mid \overline X_{S\cap I(V)}=1]}\\
      &= \frac{\e[X_V]}{1 - \Pr[X_i = 1\text{ for some $i\in S \setminus I(V)$} \mid \ol X_{S \cap I(V)} = 1]} \le \frac{\e[X_V]}{1-\rho_V},
    \end{split}
  \]
  Since $\rho_V \le \rho_{k+1} \le 1-\eps$ for all $V$ with $|V| = k+1$,
  summing~\eqref{eq:X_ell-qVS} over all $\ell \in [N]$ yields
  \[
    \Big|\sum_{\ell\in [N]} \e[X_\ell\mid \overline X_{[\ell-1]}=1] - \sum_{\ell\in [N]}\sum_{i\in [k]} \sum_{V\in \mathcal C_i(\ell)} q(V,[\ell-1]) \Big|
    \leq \Delta_{k+1}/\eps, \]
  which is precisely the assertion of the lemma.
\end{proof}

\subsection{Proof of Lemma \ref{lemma:top2} -- preliminaries}

The goal of this subsection is to derive a recursive formula for
$\kappa_V(W)$, stated in Lemma~\ref{lemma:kappavrec} below, which will be used
in the proof of Lemma~\ref{lemma:top2}. 

\begin{definition}
  Suppose that $\emptyset \neq V\subseteq W\subseteq [N]$. We
  define $\Pi_V(W)$ and $\Pi^{\att}_V(W)$ as follows:
  \begin{enumerate}
    \item
      $\Pi_V(W)$ is the set of all partitions of $W$ that contain $V$ as a part;
    \item
      $\Pi^{\att}_V(W)$ is the set of all partitions $\pi\in \Pi_V(W)$ such
      that $P \att V$ for every part $P\in \pi$.
  \end{enumerate}
\end{definition}

Since by now we have defined several different classes of partitions of a set
$W$, it is a good moment to pause and convince ourselves that
\[
  \Pi^{\att}_V(W) \subseteq \Pi_V(W) \subseteq \pic_V(W) \subseteq \Pi(W).
\]
As a first step towards the promised recursive formula, we give an alternative expression for $\kappa_V(W)$.

\begin{definition}[Degree of a part in a partition]
  For a partition $\pi$ of a subset of $[N]$ and any part $P\in \pi$, let
  $d_\pi(P)$ denote the number of parts $P' \in \pi \setminus \{P\}$ such that
  $G_\Gamma$ contains an edge between $P'$ and $P$. We call $d_{\pi}(P)$ the
  \emph{degree} of $P$ in $\pi$.
\end{definition}

\begin{lemma}
  \label{lemma:kappav'}
  If $\emptyset\neq V\subseteq W\subseteq [N]$, then
  \[
    \kappa_V(W) = \sum_{\pi\in\Pi_V(W)} (-1)^{|\pi|-1} \chi_V(\pi) \mu_\pi,
  \]
  where
  \[
    \chi_V(\pi) =
    \begin{cases}
      1 &\text{if $|\pi|=1$}\\
      d_\pi(V)(|\pi|-2)! & \text{if $|\pi|\geq 2$}.
    \end{cases} 
  \]
\end{lemma}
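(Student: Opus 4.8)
The plan is to rewrite the sum defining $\kappa_V(W)$ in~\eqref{eq:kappav}, which runs over $\pi \in \pic_V(W)$, as a sum over $\Pi_V(W)$ — the partitions that have $V$ itself as a part — by a regrouping argument. Recall that $\pic_V(W)$ consists of those partitions $\pi$ having a (unique) part $P \supseteq V$ such that $V$ is a union of connected components of $G_\Gamma[P]$. The key observation is that each such $\pi \in \pic_V(W)$ can be obtained from a partition $\sigma \in \Pi_V(W)$ (where $V$ is split off as its own part) by \emph{merging} $V$ with some collection of other parts of $\sigma$ — precisely those parts that, together with $V$, will form $P$ — and the condition "$V$ is a union of components of $G_\Gamma[P]$" says exactly that none of those merged parts is $G_\Gamma$-adjacent to $V$. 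So the first step is to set up this correspondence carefully: given $\sigma \in \Pi_V(W)$ and a subset $T$ of the parts of $\sigma$ none of which is adjacent to $V$, merging $V \cup \bigcup T$ into one part yields an element $\pi(\sigma, T) \in \pic_V(W)$, and conversely every $\pi \in \pic_V(W)$ with $P$ the distinguished part arises in this way, where $\sigma$ is the partition obtained by breaking $P$ into $V$ plus the components of $G_\Gamma[P]\setminus V$ regrouped as they were in $\pi$... actually more simply: $\sigma$ is $\pi$ with $P$ replaced by $V$ together with a single part $P \setminus V$, and $T = \{P\setminus V\}$ — but one must allow $T$ to range so as to recover all $\pi$; the cleanest formulation is that $\sigma$ ranges over $\Pi_V(W)$ and $T$ over the set of parts of $\sigma$ disjoint-from-and-nonadjacent-to $V$, with the part $P = V \cup \bigcup_{P' \in T} P'$.

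The second step is a purely combinatorial identity in $|\sigma|$. Fix $\sigma \in \Pi_V(W)$ with $|\sigma| = m$ parts, of which $V$ is one; say $a$ of the remaining $m-1$ parts are nonadjacent to $V$ (these are the ones eligible for merging) and $d = d_\sigma(V) = m - 1 - a$ are adjacent to $V$. When we merge $V$ with a subset $T$ of size $j$ of the $a$ eligible parts, the resulting partition $\pi$ has $|\pi| = m - j$ parts, so it contributes $(-1)^{m-j-1}(m-j-1)!$ to $\kappa_V(W)$, and $\mu_\pi = \mu_\sigma$ because $\e[X_P] = \e[X_{V}]\prod_{P' \in T}\e[X_{P'}]$ — here I use the independence $\e[X_{A\cup B}] = \e[X_A]\e[X_B]$ when there is no $G_\Gamma$-edge between $A$ and $B$, which holds since $\e[X_W] = \prod_{\omega \in \bigcup W} p_\omega$ and the vertex sets are disjoint. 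Hence the total coefficient of $\mu_\sigma$ in $\kappa_V(W)$ is
\[
  \sum_{j=0}^{a} \binom{a}{j} (-1)^{m-j-1}(m-j-1)!.
\]
The third step is to evaluate this sum and check it equals $(-1)^{m-1}\chi_V(\sigma)$, i.e.\ $1$ when $m=1$ and $(-1)^{m-1} d (m-2)!$ when $m \ge 2$ (note $d = m-1-a$). When $m = 1$ we have $W = V$, $a = 0$, and the sum is $1$, matching. For $m \ge 2$: writing the sum as $(-1)^{m-1}\sum_{j=0}^a \binom{a}{j}(-1)^{j} (m-1-j)!$, I would prove $\sum_{j=0}^a \binom{a}{j}(-1)^j (m-1-j)! = (m-1-a)(m-2)! = d\,(m-2)!$ by a short induction on $a$ (or by recognizing it as an iterated finite-difference of the factorial sequence): the $a \to a+1$ step uses $\binom{a+1}{j} = \binom{a}{j} + \binom{a}{j-1}$ to telescope, with the base case $a = 0$ giving $(m-1)! = (m-1)(m-2)!$. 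Note $m - 1 - a \ge 1$ here because $m \ge 2$ forces at least one part besides $V$... wait, that part could be nonadjacent; if $a = m-1$ then $d = 0$ and both sides are $0$, which the identity still delivers, so there is no edge case to worry about.

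Assembling: $\kappa_V(W) = \sum_{\sigma \in \Pi_V(W)} (-1)^{|\sigma|-1}\chi_V(\sigma)\mu_\sigma$, as claimed. The main obstacle is step one — making the merging correspondence between $\pic_V(W)$ and pairs $(\sigma, T)$ genuinely bijective and bookkeeping the "nonadjacent to $V$" condition correctly, since $\pic_V(W)$ is defined via a connected-components condition that has to be translated into an adjacency condition on the parts being merged; the factorial identity in steps two and three is routine once the regrouping is pinned down. One should also double-check that after merging, the part $P \setminus V$-type pieces may themselves be adjacent to each other — that is fine and irrelevant, only adjacency to $V$ matters for membership in $\pic_V(W)$, which is what keeps the correspondence clean.
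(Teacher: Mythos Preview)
Your regrouping idea is the right one, but step one fails: the map from pairs $(\sigma,T)$ (with $\sigma\in\Pi_V(W)$ and $T$ an arbitrary subset of the parts of $\sigma$ non-adjacent to $V$) to $\pic_V(W)$ is \emph{not} a bijection once $|T|\ge 2$. Given $\pi\in\pic_V(W)$ with distinguished part $P$, the set $P\setminus V$ can be cut up into non-$V$-adjacent parts in many ways, each yielding a different $\sigma$ with the same $\pi$. Concretely, take $W=\{1,2,3\}$, $V=\{1\}$, and no $G_\Gamma$-edges: then $\pi=\{\{1,2,3\}\}$ arises both from $\sigma=\{\{1\},\{2\},\{3\}\},\,T=\{\{2\},\{3\}\}$ and from $\sigma=\{\{1\},\{2,3\}\},\,T=\{\{2,3\}\}$. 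Summing your step-two expression over $\Pi_V(W)$ in this example gives $\mu$, whereas $\kappa_V(W)=0$.

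This overcounting is exactly why the identity in step three is false: for $m=3,\,a=2$ one has $\sum_{j=0}^{2}\binom{2}{j}(-1)^j(2-j)! = 2-2+1 = 1$, not $(m-1-a)(m-2)! = 0$; for $m=4,\,a=2$ the sum is $3$, not $2$. (In fact $\sum_{j=0}^a\binom{a}{j}(-1)^j(m-1-j)!$ counts permutations of $[m-1]$ with no fixed point in $[a]$, which is not linear in $a$.) Your induction check would have caught this: the recursion gives $S(m,a+1)=S(m,a)-S(m-1,a)$, and $(m-1-a)(m-2)! - (m-2-a)(m-3)! \neq (m-2-a)(m-2)!$.

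The fix---which is what the paper does---is to restrict to $|T|\le 1$. Define $f\colon\pic_V(W)\to\Pi_V(W)$ by splitting the distinguished part $P$ into $V$ and the single part $P\setminus V$ (and $f(\pi)=\pi$ if $P=V$). Now $f$ is a genuine surjection whose fibre over $\sigma$ has size exactly $1+a$: the partition $\sigma$ itself, plus one merge of $V$ with each of the $a$ non-adjacent parts. Since $\mu$ is unchanged under these merges, the coefficient of $\mu_\sigma$ becomes
\[
(-1)^{m-1}(m-1)! \;+\; a\,(-1)^{m-2}(m-2)! \;=\; (-1)^{m-1}(m-1-a)(m-2)! \;=\; (-1)^{m-1}d_\sigma(V)(m-2)!,
\]
with no further combinatorial identity needed.
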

\begin{proof}
  Given a $\pi\in \pic_V(W)$, let $P$ denote the part of $\pi$ containing $V$.
  Define a map $f \colon \pic_V(W) \to \Pi_V(W)$ as follows. If $P = V$, then let $f(\pi) = \pi$. Otherwise, let $f(\pi)$ be the partition obtained from $\pi$ by splitting
  $P$ into $V$ and $P \setminus V$. Clearly,
  \[ 
\begin{split}
    \kappa_V(W) & = \sum_{\pi\in \pic_V(W)} (-1)^{|\pi|-1}(|\pi|-1)!\mu_\pi\\
& =
    \sum_{\pi\in \Pi_V(W)} \sum_{\pi'\in f^{-1}(\pi)} (-1)^{|\pi'|-1}(|\pi'|-1)!\mu_{\pi'}.
\end{split}
  \]
  
  Observe that every $\pi\in \Pi_V(W)$ has exactly $|\pi|-d_\pi(V)$ preimages via $f$. One of them is $\pi$ itself
  and there are $|\pi|-1-d_\pi(V)$ additional partitions obtained from $\pi$ by merging $V$ with some other part $Q \in \pi$
  such that $G_\Gamma$ contains no edges between $V$ and $Q$. In particular, there is one preimage of size $|\pi|$ and there are
  $|\pi|- 1 - d_\pi(V)$ preimages of size $|\pi| - 1$. Furthermore, note that $\mu_{\pi'} = \mu_\pi$ for every $\pi' \in f^{-1}(\pi)$. Indeed, for every $Q \in \pi$ with no edges of $G_\Gamma$ between $Q$ and $V$, we have
  \[
    \e[X_V] \cdot \e[X_Q] = \e[X_V X_Q] = \e[X_{V \cup Q}].
  \]
  It follows that
  \[
    \begin{split}
      \kappa_V(W) & =\sum_{\pi\in \Pi_V(W)} (-1)^{|\pi|-1} \Big((|\pi|-1)! - (|\pi|-1-d_\pi(V))\cdot(|\pi|-2)!\Big)\mu_\pi \\
      & = \sum_{\pi\in\Pi_V(W)} (-1)^{|\pi|-1} \chi_V(\pi) \mu_\pi,
    \end{split} 
  \]
  as claimed.
\end{proof}

Our next lemma is the main result of this subsection and the
essential combinatorial ingredient of the proof of Lemma~\ref{lemma:top2}.
Stating it requires the following definition (illustrated in
Figure~\ref{j:fig:cut}).
\begin{definition}[$\cut_V(P)$]\label{def:cut}
  Suppose that $V\subseteq [N]$ is non-empty and $P\subseteq [N]$
  is disjoint from $V$ and satisfies $P\att V$. Then
  we write $\cut_V(P)$ for the
  collection of all sets $C\subseteq [N]$ satisfying $\partial(V)\cap P\subseteq C\subseteq P$ and $C\att V$.

\end{definition}

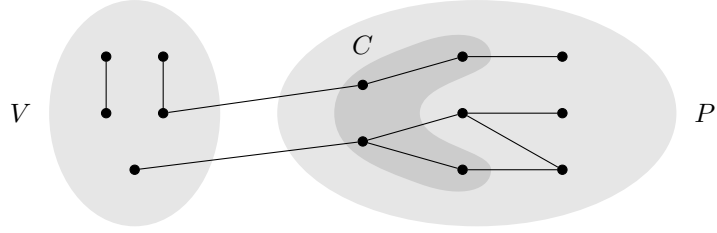
\begin{figure}
  \centering
  \begin{tikzpicture}[scale=1.5]
    \draw[fill=black!10,draw=none] (0,0.5) ellipse (0.75cm and 1cm);
    \draw[fill=black!10,draw=none] (3,0.5) ellipse (1.75cm and 1cm);
    \draw[draw=none,fill=black!20] plot [smooth cycle=1.2,tension=1] coordinates {(1.75,0.5) (2.5,1.125) (3.125,1) (2.5,0.5) (3.125,0) (2.5,-0.125)};
    \node (a) at (0.25,0.5) {};
    \node (x) at (-0.25,0.5) {};
    \node (b) at (0,0) {};
    \node (c) at (-0.25,1) {};
    \node (d) at (0.25,1) {};
    \draw (a) -- (d);
    \draw (x) -- (c);
    \node (a1) at (2,0.25) {};
    \node (b1) at (2,0.75) {};
    \draw (b1)--(a);
    \draw (a1)--(b);
    \node (a2) at (3-0.125,0) {};
    \node (b2) at (3-0.125,0.5) {};
    \node (c2) at (3-0.125,1) {};
    \node (a3) at (3.75,0) {};
    \node (b3) at (3.75,0.5) {};
    \node (c3) at (3.75,1) {};
    \draw (a1)--(a2)--(a3);
    \draw (a1)--(b2)--(a3);
    \draw (b2)--(b3);
    \draw (b1)--(c2)--(c3);
    \node[inner sep=0pt,fill=none,draw=none] at (-1,0.5) {$V$};
    \node[inner sep=0pt,fill=none,draw=none] at (5,0.5) {$P$};
    \node[inner sep=0pt,fill=none,draw=none] at (2,1.1) {$C$};
  \end{tikzpicture}
  \caption{A set $C$ in $\cut_V(P)$. Every element of $\cut_V(P)$, except
  for $P$ itelf, is a cutset in
  $G_\Gamma(V \cup P)$ that disconnects $V$ from some vertices in $P$.} \label{j:fig:cut}
\end{figure}

\begin{lemma}\label{lemma:kappavrec}
  Suppose that $\emptyset \neq V\subseteq W\subseteq [N]$ and $W \att V$.
  Then
  \begin{equation}\label{eq:kappavrec}
    \kappa_V(W) =
    \e[X_V] \sum_{\pi \in \Pi^{\att}_V(W)} (-1)^{|\pi|-1}(|\pi|-1)!
    \prod_{\substack{P\in \pi\\ P\neq V}} \sum_{C\in \cut_V(P)}\kappa_C(P).
  \end{equation}
\end{lemma}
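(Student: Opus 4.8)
The plan is to start from the alternative formula for $\kappa_V(W)$ given by Lemma~\ref{lemma:kappav'}, namely $\kappa_V(W) = \sum_{\pi\in\Pi_V(W)} (-1)^{|\pi|-1}\chi_V(\pi)\mu_\pi$, and to reorganise the sum over $\Pi_V(W)$ according to how each part attaches to $V$. Recall that a partition in $\Pi_V(W)$ has $V$ as one of its parts; call the other parts $Q_1,\dotsc,Q_m$ (so $|\pi| = m+1$). Since $W \att V$, every connected component of $G_\Gamma[W]$ meets $V$, so for each $Q_j$, if we look at the union of the component(s) of $G_\Gamma[Q_j]$ and trace connectivity back through the rest of $W$, some portion of $Q_j$ is connected to $V$ within $W$. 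I would introduce, for each part $Q$ of $\pi$, the set $C_Q \subseteq Q$ consisting of those elements of $Q$ lying in components of $G_\Gamma[W \setminus (\text{everything disjoint from }V\cup C_Q)]$ that attach to $V$ — more cleanly: $C_Q$ is the maximal subset of $Q$ with $\partial(V)\cap Q \subseteq C_Q$ and $C_Q \att V$ relative to the already-chosen cuts. The point is that $C_Q \in \cut_V(Q)$, and conversely the data of a partition $\pi\in\Pi_V(W)$ is equivalent to: a partition of $W\setminus V$ into blocks $Q_1,\dotsc,Q_m$, together with, for each block, a sub-partition recording its internal structure, where the pieces attaching to $V$ are singled out.

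The key step is a \emph{clustering/refinement bijection}. Given $\pi\in\Pi^{\att}_V(W)$ — a partition all of whose parts attach to $V$ — together with, for each part $P\neq V$ of $\pi$, a set $C\in\cut_V(P)$ and a partition $\sigma_P \in \pic_C(P)$ counted by $\kappa_C(P)$, one can \emph{merge} to produce a partition $\rho\in\Pi_V(W)$: take the part of $\sigma_P$ containing $C$ for each $P$, union all of these together with $V$ into a single block (this block then has $V$ as a union of its components, by the defining property of $\pic_C$), and keep all the other blocks of all the $\sigma_P$'s as they are. Conversely, every $\rho\in\Pi_V(W)$ arises this way: let $P^*$ be the block of $\rho$ containing $V$; the components of $G_\Gamma[P^*]$ split into those containing $V$ (which, together, I claim must equal $V$ itself only in the trivial case) — more precisely, write $P^*$ as $V$ together with the pieces hanging off it, group $W\setminus V$ into the coarsest partition $\pi$ refining "$\rho$ with $P^*$ replaced by $P^*\setminus V$" subject to all parts attaching to $V$, and then $C_P := P\cap P^*$ for $P\in\pi$. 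The sign and factorial bookkeeping is: $(-1)^{|\rho|-1}$ factors as $(-1)^{|\pi|-1}$ times $\prod_P (-1)^{|\sigma_P|-1}$ after accounting for the merge; the term $\chi_V$ in Lemma~\ref{lemma:kappav'}, which is $d_\pi(V)(|\pi|-2)!$, is exactly what is needed so that when we sum over which block of $\pi$ gets absorbed first — hence the $(|\pi|-1)!$ in \eqref{eq:kappavrec} rather than the $\chi_V$-weighted count — the weights match; and $\mu_\rho = \e[X_V]\prod_P \mu_{\sigma_P}$ because $\mu$ is multiplicative over the blocks and $\e[X_{\bigcup\text{(parts containing }C_P\text{)}\cup V}] = \e[X_V]\prod_P\e[X_{\text{that part of }\sigma_P}]$ by the product structure of the $X$'s (edge-disjoint blocks are independent).

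Concretely I would carry out the steps in this order. First, fix $\pi\in\Pi_V(W)$ in the sum from Lemma~\ref{lemma:kappav'} and decompose $\pi$: its part $V$, and its other parts. Second, define for each such $\pi$ the "core" $\pi^\flat \in \Pi^{\att}_V(W)$ obtained by merging into $V$'s block all parts of $\pi$ that do not attach to $V$ — wait, that is not quite it either; rather, define $\pi^\flat$ by, for each part $P\ne V$, replacing $P$ by $P$ if $P\att V$ and otherwise noting such $P$ cannot be split off, so actually every part of a partition in $\Pi_V(W)$ with $W\att V$ already satisfies a weaker condition. I would instead directly set up the merge map $g$ from triples $(\pi,(C_P)_P,(\sigma_P)_P)$ to $\Pi_V(W)$ described above and prove it is a bijection onto $\Pi_V(W)$ (or, if it is not injective, that the multiplicities are exactly absorbed by the factorials). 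Third, verify the three matching identities (sign, combinatorial weight, and $\mu$-value) term by term. Fourth, substitute $\kappa_C(P) = \sum_{\sigma\in\pic_C(P)}(-1)^{|\sigma|-1}(|\sigma|-1)!\mu_\sigma$ and collect to obtain the right-hand side of \eqref{eq:kappavrec}.

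The main obstacle is the combinatorial bookkeeping in the middle step: making the merge map genuinely well-defined and bijective, in particular pinning down \emph{which} sub-partition data on each part $P$ is allowed (this is exactly why $\cut_V(P)$ and $\pic_C(P)$ were defined as they were) and checking that the factorial weights $(|\pi|-1)!$ on the outside and $(|\sigma_P|-1)!$ inside each $\kappa_C(P)$ multiply up, after the $\chi_V$-to-$(|\pi|-1)!$ conversion, to the weight $\chi_V(\rho)$ attached to the image partition $\rho$ by Lemma~\ref{lemma:kappav'}. The conversion between $\chi_V(\pi) = d_\pi(V)(|\pi|-2)!$ and $(|\pi|-1)!$ is where the degree $d_\pi(V)$ enters, and handling the boundary case $|\pi|=1$ (where $W=V$ and both sides equal $\e[X_V]$) separately will be necessary. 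Everything else — the multiplicativity of $\mu_\pi$, the independence of edge-disjoint blocks, the sign accounting — is routine once the bijection is in place.
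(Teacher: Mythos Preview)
Your overall strategy---relate the triples $(\pi,(C_P)_P,(\sigma_P)_P)$ with $\pi\in\Pi^{\att}_V(W)$, $C_P\in\cut_V(P)$, $\sigma_P\in\pic_{C_P}(P)$ to partitions counted by Lemma~\ref{lemma:kappav'}, and match signs, weights, and $\mu$-values---is the same as the paper's. But two concrete things go wrong in your execution.

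First, your merge map is mis-specified. You propose to ``union all of these together with $V$ into a single block'' and claim the resulting block has $V$ as a union of its components. This is false: each $C_P$ contains $\partial(V)\cap P$, which is nonempty whenever $P\att V$ and $P\neq\emptyset$, so the merged block always contains neighbours of $V$ and hence $V$ is \emph{not} a union of components of it. Consequently your claim $\mu_\rho = \e[X_V]\prod_P\mu_{\sigma_P}$ fails, because the merged block is not edge-disjoint from $V$. The correct construction (as in the paper) is simply a \emph{refinement}: keep $V$ as its own block and take all parts of all the $\sigma_P$ as the remaining blocks of $\pi^*\in\Pi_V(W)$. Then $\mu_{\pi^*}=\e[X_V]\prod_P\mu_{\sigma_P}$ is immediate from multiplicativity, with no independence claim needed. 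A related point you miss is that the sum over $\Pi^{\att}_V(W)$ must first be extended to $\Pi'_V(W)$ (partitions in which every non-$V$ part meets $\partial(V)$), using that the inner product vanishes on the extra terms; only then does the refinement correspondence hit every $\pi^*\in\Pi_V(W)$ with the right multiplicities.

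Second, and more seriously, you do not establish the factorial identity, which is the entire content of the lemma once the correspondence is set up. You write that $(|\pi|-1)!$ and the $(|\sigma_P|-1)!$'s ``multiply up, after the $\chi_V$-to-$(|\pi|-1)!$ conversion, to $\chi_V(\rho)$'', but this is exactly what needs proving. Concretely: if $\pi^*$ has $s$ parts and $t=d_{\pi^*}(V)$ of them (other than $V$) meet $\partial(V)$, then every admissible coarsening $\pi$ has exactly $t$ non-$V$ parts (each containing precisely one of the $t$ neighbouring parts of $\pi^*$), and one must show
\[
\sum_{\substack{\pi:\,(\pi,\pi^*)\in\mathcal P}} (|\pi|-1)!\prod_{P\in\pi\setminus\{V\}}(|\sigma_P|-1)! \;=\; t\,(s-2)! \;=\; \chi_V(\pi^*).
\]
The paper proves this by encoding each coarsening as a set of permutations $\sigma\in\Sym(s-1)$ with $\sigma(1)\in[t]$; the identity then reduces to counting such permutations. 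Without this (or an equivalent multinomial computation), the argument is incomplete.
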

\begin{proof}
  Denote the right hand side of~\eqref{eq:kappavrec} by $r_V(W)$. We need to show $\kappa_V(W) = r_V(W)$.
  Let us first rewrite the inner sum in~\eqref{eq:kappavrec}.  To this end, fix some non-empty
  $P \subseteq W\setminus V$ such that $P\att V$.  By the definition of $\kappa_C(P)$, see~\eqref{eq:kappav},
  \begin{equation}\label{eq:a1}
    \sum_{C\in \cut_V(P)}\kappa_C(P) = \sum_{C\in \cut_V(P)} \sum_{\pi\in
    \pic_C(P)} (-1)^{|\pi|-1}(|\pi|-1)! \mu_\pi.
  \end{equation}
  We may write this double sum more compactly as follows. For brevity, let $\partial_P(V) := \partial(V)\cap P$.
  Denote by $\tilde \Pi_V(P)$ the set of all partitions $\pi\in \Pi(P)$ such that some $Q \in \pi$ contains all neighbours of $V$
  in $P$, that is, such that $\partial_P(V) \subseteq Q$ for some $Q \in \pi$. We claim that
  \begin{equation}\label{eq:a2}
    \sum_{C\in \cut_V(P)} \kappa_C(P) = \sum_{\pi\in \tilde\Pi_V(P)}
    (-1)^{|\pi|-1}(|\pi|-1)! \mu_\pi.
  \end{equation}
  Indeed, this follows from \eqref{eq:a1} because, letting
  \[
    \mathcal Q(V,P) = \{(C,\pi) : C\in \cut_V(P) \text { and } \pi \in \pic_C(P)\},
  \]
  the projection $p_2\colon \mathcal Q(V,P) \ni (C, \pi) \mapsto \pi \in \Pi(P)$ is a bijection between $\mathcal Q(V,P)$ and $\tilde \Pi_V(P)$.
  This is because for every $(C, \pi) \in \mathcal Q(V,P)$, $C$ is the union of those connected components of $G_\Gamma(Q)$ that intersect $\partial_P(V)$.
  Furthermore, observe that the right-hand side of~\eqref{eq:a2} is simply the joint cumulant of the set
  \[
    P_V = \{X_i : i \in P\setminus \partial_P(V)\}\cup \{X_{\partial_P(V)}\},
  \]
  which is obtained from $P$ by replacing $\{X_i : i \in \partial_P(V)\}$ with the single variable $X_{\partial_P(V)}$.
  Therefore, it follows from~\eqref{eq:a2} that
  \begin{equation}\label{eq:a3}
    r_V(W) = \e[X_V]\sum_{\pi\in \Pi^{\att}_V(W)} (-1)^{|\pi|-1}(|\pi|-1)! \prod_{\substack{P\in \pi\\P\neq V}} \kappa(P_V).
  \end{equation}

  Let $\Pi'_V(W)$ be the set of all partitions in $\Pi_V(W)$ whose every part, except possibly $V$ itself,
  contains a neighbour of $V$. We claim that the product in the right-hand side of~\eqref{eq:a3} is zero
  for every $\pi \in \Pi_V'(W) \setminus \Pi_V^{\att}(W)$ and hence we may replace $\Pi_V^{\att}(W)$ with
  $\Pi_V'(W)$ in the range of summation in~\eqref{eq:a3}. Indeed, if $\pi \in \Pi_V'(W) \setminus \Pi_V^{\att}(W)$,
  then there is a $P \in \pi \setminus \{V\}$ such that $\partial_P(V) \neq \emptyset$ but $P \not\att V$.
  In particular, some connected component of $G_\Gamma[P]$ is disjoint from $\partial_P (V)$ and hence $\kappa(P_V) = 0$.  Expanding $\kappa(P_V)$ again, we obtain
  \begin{equation}\label{eq:a4}
    r_V(W) = \e[X_V]\kern-7pt\sum_{\pi\in \Pi'_V(W)}
    \kern-7pt(-1)^{|\pi|-1} (|\pi|-1)!
    \prod_{\substack{P\in \pi\\P\neq V}}
    \sum_{\pi'\in \tilde\Pi_V(P)}\kern-7pt (-1)^{|\pi'|-1}(|\pi'|-1)! \mu_{\pi'}.
  \end{equation}
  
  Let us write $\mathcal P$ to denote the set of all pairs $(\pi, \pi^*)\in \Pi'_V(W)\times \Pi_V(W)$ obtained as follows.
  Choose an arbitrary partition $\pi\in\Pi_V'(W)$ and refine every $P \in \pi\setminus\{V\}$ by replacing it by some
  $\pi_P \in \tilde\Pi_V(P)$, so that $\partial_P(V)$ is contained in a single part of $\pi_P$; finally, let $\pi^*$
  be the resulting partition of $W$.
  
  Suppose that $(\pi, \pi^*) \in \mathcal P$. Enumerate the parts of $\pi$ as
  $V, P_1, \dotsc, P_t$ and suppose that $\pi^*$ was obtained from $\pi$ by refining each $P_j$ into
  $i_j + 1$ parts, so that $|\pi^*| = t + 1 + i_1 + \dotsc + i_t$. Then, letting
  \[
    f(\pi,\pi^*) =  f_t(i_1,\dotsc,i_t) := (-1)^t t! \prod_{j\in [t]} (-1)^{i_j} i_j! = (-1)^{|\pi^*|-1} t! \prod_{j\in [t]}i_j!,
  \]
  we may rewrite~\eqref{eq:a4} as
  \begin{equation}\label{eq:terms}
    r_V(W) = \sum_{(\pi,\pi^*)\in \mathcal P} f(\pi,\pi^*)\mu_{\pi^*}.
  \end{equation}

  Fix some $\pi^*\in \Pi_V(W)$ and note that $\pi^*$ contains $d_{\pi^*}(V)$ parts other than $V$ that intersect $\partial(V)$.
  Let us write $s = |\pi^*|$,  $t = d_{\pi^*}(V)$, and
  $\pi^* = \{V, P_1^*, \dotsc, P_{s-1}^*\}$, so that $P_1^*, \dotsc, P_t^*$ are the parts intersecting $\partial(V)$.
  Fix an arbitrary permutation $\sigma$ of $[s-1]$ such that $\sigma(1) \in [t]$. Such a $\sigma$ can be used to define a $\pi$ such that
  $(\pi, \pi^*) \in \mathcal P$ in the following way. Consider the sequence $P_\sigma^* = (P_{\sigma(1)}^*, \dotsc, P_{\sigma(s-1)}^*)$.
  For every $i \in [t]$, let $P_i$ be the union of $P_i^*$ and all the $P_j^*$, with $j \in [s-1] \setminus [t]$, for which $P_i^*$ is the right-most element
  among $P_1^*, \dotsc, P_t^*$ that is to the left of $P_j^*$ in $P_\sigma^*$. (Since $\sigma(1) \in [t]$, then each $P_j^*$ with $j \in [s-1] \setminus [t]$
  has one of $P_1^*, \dotsc, P_t^*$ left of it.) A moment's thought reveals that each partition $\pi$ with $(\pi, \pi^*) \in \mathcal P$ is obtained this way
  from exactly $|f(\pi, \pi^*)|$ permutations $\sigma$. It follows that
  \[
    \begin{split}
      r_V(W) & = \sum_{\pi^* \in \Pi_V(W)} (-1)^{|\pi^*|-1} \mu_{\pi^*} \sum_{\substack{\pi \in \Pi'_V(W) \\ (\pi, \pi^*) \in \mathcal P}} |f(\pi, \pi^*)| \\
      & = \sum_{\pi^* \in \Pi_V(W)} (-1)^{|\pi^*|-1} \mu_{\pi^*} \cdot |\{ \sigma \in \Sym(|\pi^*|-1) : \sigma(1) \in \{1, \dotsc, d_{\pi^*}(V)\} \}| \\
      & = \sum_{\pi^* \in \Pi_V(W)} (-1)^{|\pi^*|-1} \mu_{\pi^*} \cdot \chi_V(\pi^*),
    \end{split}
  \]
  where $\chi_V(\pi^*)$ is as defined in Lemma~\ref{lemma:kappav'}. By Lemma~\ref{lemma:kappav'}, we conclude that $r_V(W) = \kappa_V(W)$, as required.
\end{proof}

\subsection{Proof of Lemma \ref{lemma:top2}}

For $V,S\subseteq [N]$ and $k\in \NN$ such that $|V| \leq k$, we define
\begin{equation}\label{eq:kappatilde}
  \tilde\kappa^{(k)}_V(S) = (-1)^{|V|-1} \e[X_V] \sum_{0\leq i\leq k-|V|}
  \Big(\sum_{\substack{U\subseteq S, U\att V\\ 1 \leq |U|\leq k-|V|}}
  \kappa^{(k-|V|)}_U(S\cap I(V))\Big)^i
\end{equation}
and
\begin{equation}\label{eq:qk}
  q^{(k)}(V,S) = (-1)^{|V|-1} \e[X_V]\sum_{0\leq i\leq k-|V|}
  \Big(\sum_{\substack{U\subseteq S, U\att V\\ 1 \leq |U|\leq k-|V|}} q(U,S\cap
  I(V))\Big)^i.
\end{equation}
Our proof of Lemma~\ref{lemma:top2} consists of three steps. First, in Lemma~\ref{lemma:qrec},
we show that $q(V, S) \approx q^{(k)}(V, S)$. Second, in Lemma~\ref{lemma:kappavkrec}, we show
that $\kappa_V^{(k)}(S) \approx \tilde\kappa^{(k)}_V(S)$. Finally, the fact that $q^{(k)}(V,S)$ and 
$\tilde\kappa_V^{(k)}(S)$ satisfy similar recurrences (given the above approximate equalities)
allows us to prove that also $q(V,S) \approx \kappa^{(k)}_V(S)$. Lemma~\ref{lemma:top2} then
follows easily. The precise definition of `$\approx$' above will be expressed by the following quantities.
For integers $k$ and $K$ satisfying $1\leq k\leq K$, define
\begin{equation}
  \Delta_k(V) = \sum_{\substack{U\att V\\ |U\cup V| = k\\}} \e[X_{U \cup V}]
  \qquad
  \text{and}
  \qquad
  \Delta_{k,K}(V) = \sum_{i=k}^K \Delta_i(V),
\end{equation}
and
\begin{equation}
  \delta_{k,K}(V) = \sum_{\substack{U\att V\\k\leq |U\cup V|\leq K\\}}
  \e[X_{U\cup V}]\max{\{\e[X_i] : i\in U\cup V\}}.
\end{equation}

\begin{lemma}\label{lemma:qrec}
  Let $\eps > 0$ and $k\in \NN$ be such that $\rho_k\leq 1-\eps$.
  Then there exists $K = K(k,\eps)$ such that for all
  $V,S\subseteq [N]$ with $1 \leq |V| \leq k$,
  \[
    |q(V,S) - q^{(k)}(V,S)| \leq K \cdot \big(\delta_{1,K}(V) + \Delta_{k+1,K}(V)\big).
  \]
\end{lemma}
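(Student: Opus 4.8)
The plan is to route the whole estimate through the single quantity $\theta := \e[\ol X_{S \setminus I(V)} \mid \ol X_{S \cap I(V)} = 1]$, which will make any lower bound on a denominator of the form $1-T$ unnecessary. Write $B = S \setminus I(V)$ and $J = S \cap I(V)$, so that by~\eqref{eq:defq} one has $q(V,S) = (-1)^{|V|-1}\e[X_V]/\theta$; since $B \subseteq V \cup \partial(V)$, Harris's inequality gives $\theta \ge \e[\ol X_B] \ge 1 - \rho_V \ge \eps$ (using $\rho_V \le \rho_k \le 1-\eps$, as $|V| \le k$). Hence it is enough to bound $\bigl|(-1)^{|V|-1}\e[X_V] - \theta\,q^{(k)}(V,S)\bigr|$ and then divide by $\theta$. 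Abbreviating $m = k - |V|$, $\Sigma = 1-\theta$, and $T = \sum_{U \subseteq S,\, U \att V,\, 1 \le |U| \le m} q(U,J)$, the definition~\eqref{eq:qk} gives $\theta\, q^{(k)}(V,S) = (-1)^{|V|-1}\e[X_V](1-\Sigma)\sum_{0 \le i \le m} T^i$, and the elementary identity
\[
  (1-\Sigma)\sum_{0 \le i \le m} T^i = \bigl(1 - T^{m+1}\bigr) + (T - \Sigma)\sum_{0 \le i \le m} T^i
\]
reduces the task to bounding $\e[X_V]\,|T|^{m+1}$ and $\e[X_V]\,|\Sigma - T|\sum_{0 \le i \le m} |T|^i$.

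The first ingredient is a bound on $|\Sigma - T|$. Expanding $\Sigma = \sum_{\emptyset \ne W \subseteq B} (-1)^{|W|-1}\e[X_W \mid \ol X_J = 1]$ by inclusion--exclusion, one treats each $W$ with $1 \le |W| \le m$ by applying Lemma~\ref{lemma:helper} with $(W,J)$ in the roles of $(V,S)$ — note that $W$ and $J$ are disjoint — at levels $m-|W|$ and $m-|W|+1$; the two inequalities sandwich $(-1)^{|W|-1}\e[X_W \mid \ol X_J = 1]$ around $\sum_{U' \subseteq J,\, U' \att W,\, |U'| \le m-|W|} q(W \cup U',J)$ with error at most $\sum_{U' \att W,\, |U'| = m-|W|+1} |q(W \cup U',J)|$. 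The map $U \mapsto \bigl(U \cap (V\cup\partial(V)),\, U \cap I(V)\bigr)$ is a cardinality-preserving bijection from $\{\emptyset \ne U \subseteq S : U \att V\}$ onto $\{(W,U') : \emptyset \ne W \subseteq B,\ U' \subseteq J,\ U' \att W\}$, which follows from the definition of $\att$ and the partition $[N] = V \cup \partial(V) \cup I(V)$; under it the partial sums over the $W$ of size at most $m$ assemble exactly into $T$, and the accumulated error is at most $\eps^{-1}\sum_{U \subseteq S,\, U \att V,\, |U| = m+1} \e[X_U]$, using $|q(U,J)| \le \e[X_U]/(1-\rho_U) \le \e[X_U]/\eps$ for $|U| = m+1 \le k$. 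The remaining part of $\Sigma$ — the sum over $W$ with $|W| > m$ — is exactly $\Sigma$ minus its Bonferroni partial sum of order $m$, which the Bonferroni inequalities under the conditional law $\Pr[\,\cdot \mid \ol X_J = 1]$, together with Harris's inequality $\e[X_W \mid \ol X_J = 1] \le \e[X_W]$, bound by $\sum_{W \subseteq B,\, |W| = m+1} \e[X_W]$. Altogether $|\Sigma - T| \le C\sum_{U \subseteq S,\, U \att V,\, |U| = m+1} \e[X_U]$ for some $C = C(\eps)$.

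The second ingredient consists of two elementary consequences of $\e[X_A] = \prod_{\omega \in \bigcup_{i \in A}\gamma_i} p_\omega$: the submultiplicativity $\e[X_{A_1}]\cdots\e[X_{A_r}] \le \e[X_{A_1 \cup \dots \cup A_r}]$, and its refinement $\e[X_{A_1}]\cdots\e[X_{A_r}] \le \e[X_i]\cdot\e[X_{A_1 \cup \dots \cup A_r}]$ whenever some index $i$ lies in two of the $A_j$. Expanding $\e[X_V]\,T^{m+1}$ and $\e[X_V]\,|\Sigma - T|\sum_{0\le i\le m}|T|^i$ with the help of the previous paragraph and $|q(U,J)| \le \e[X_U]/\eps$, each summand is, up to a factor $C(\eps)$, of the form $\e[X_V]\,\e[X_{U_1}]\cdots\e[X_{U_r}]$ with $r \le m+1$ in the first case and with one of the $U_j$ of size $m+1$ in the second. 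If $V, U_1, \dots, U_r$ are pairwise disjoint, their union has size at least $|V| + (m+1) = k+1$ in either case, and submultiplicativity charges the summand to $\Delta_j(V)$ for some $j \in [k+1, K]$; otherwise some index is shared, and the refinement charges it to $\e[X_{V \cup U_1 \cup \dots \cup U_r}]\cdot\max\{\e[X_i] : i \in V \cup U_1 \cup \dots \cup U_r\}$, a summand of $\delta_{1,K}(V)$. Here $K = K(k)$ bounds the size of any union that can occur (at most $|V| + (m+1)m$), and there are only boundedly many shapes of summand, so both quantities are at most $K'\bigl(\delta_{1,K}(V) + \Delta_{k+1,K}(V)\bigr)$ for some $K' = K'(k,\eps)$; dividing by $\theta \ge \eps$ and taking $K$ larger than $K'/\eps$ finishes the argument.

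I expect the main obstacle to be the bookkeeping of the last two paragraphs: one has to verify that every error term genuinely is either a $\delta$-term or a $\Delta$-term of order at least $k+1$, which is exactly where the disjoint/overlapping dichotomy and the decision to truncate via Bonferroni (rather than discarding a tail crudely, which would leave terms of unbounded order) are needed; one must also accommodate the case $V \cap S \neq \emptyset$, in which the bijection has $W$ possibly meeting $V$, but this is harmless because the corresponding overlap is absorbed into $\delta_{1,K}(V)$. The key structural point is that working with $\theta$ rather than $1/(1-T)$ removes any need to bound $|1-T|$ away from zero.
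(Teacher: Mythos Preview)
Your proof is correct and follows essentially the same approach as the paper. Both arguments (i) rewrite $q(V,S)$ in terms of $\Sigma = \rho = 1 - \theta$, (ii) bound $|\Sigma - T|$ by combining Bonferroni with Lemma~\ref{lemma:helper} and the bijection $U \leftrightarrow (U \cap B,\, U \cap J)$, and (iii) expand the resulting products $\e[X_V]\prod_j \e[X_{U_j}]$ and charge each term to $\Delta_{k+1,K}(V)$ or $\delta_{1,K}(V)$ via Harris's inequality and the FKG lattice condition, according to whether the sets are pairwise disjoint or not. Your algebraic packaging---multiplying through by $\theta$ and using $(1-\Sigma)\sum_{i\le m}T^i = (1-T^{m+1}) + (T-\Sigma)\sum_{i\le m}T^i$---is a cosmetic rearrangement of the paper's decomposition $q - q^{(k)} = \big(q - (-1)^{|V|-1}\e[X_V]\sum_{i\le m}\rho^i\big) + (-1)^{|V|-1}\e[X_V]\sum_{i\le m}(\rho^i - T^i)$; the paper likewise never needs $|1-T|$ bounded away from zero, since it truncates the geometric series for $1/(1-\rho)$ with $\rho \le 1-\eps$, not for $1/(1-T)$.
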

\begin{proof}
  Fix $V$ and $S$ as in the statement of the lemma and set
  \[
    \rho = \Pr[X_i = 1\text{ for some $i\in S \setminus I(V)$} \mid \ol{X}_{S\cap I(V)} = 1].
  \]
  Then by definition
  \begin{equation} \label{eq:auxqVS}
    q(V,S) = \frac{(-1)^{|V|-1}\e[X_V]}{\e[\ol{X}_{S \setminus I(V)} \mid \ol{X}_{S\cap I(V)}=1]} = \frac{(-1)^{|V|-1}\e[X_V]}{1-\rho}.
  \end{equation}
  Since,
  by
  Harris's inequality
  and
  $|V|\leq k$, we have
  $0 \leq \rho \leq \rho_V
  \leq \rho_k \leq 1-\eps$,
  then~\eqref{eq:auxqVS}
  and the identity $(1-\rho)^{-1} = 1 + \rho + \dotsc + \rho^{k-|V|} + \rho^{k-|V|+1}(1-\rho)^{-1}$ yield
  \begin{equation}\label{eq:aVC-approx}
    \big|q(V,S) - (-1)^{|V|-1}\e[X_V]\cdot (1+\rho+\dotsb+\rho^{k-|V|})\big| \leq \eps^{-1}\e[X_V]\rho_V^{k-|V|+1}.
  \end{equation}
  We now observe that
  \[\begin{split}
    \e[X_V]\rho_V^{k-|V|+1} & \leq \e[X_V] \Big(\sum_{i\in V \cup \partial(V)}\e[X_i]\Big)^{k-|V|+1} \\
&= \e[X_V] \sum_{i_1, \dotsc, i_{k-|V|+1}} \prod_{j=1}^{k-|V|+1} \e[X_{i_j}]\end{split}
  \]
  and note that if $i_1, \dotsc, i_{k-|V|+1}$ are distinct elements of $\partial(V)$, then
  \[
    \e[X_V] \prod_{j=1}^{k-|V|+1} \e[X_{i_j}] \le \e[X_{V \cup \{i_1, \dotsc, i_{k-|V|+1}\}}]
  \]
  by Harris's inequality; if, on the other hand, either $i_j \in V$ for some $j$ or some two $i_j$ are equal, then Harris's inequality and the fact that $|\e[X_i]| \le 1$ for each $i$ imply the stronger bound
  \[
    \begin{split}
      &\e[X_V] \prod_{j=1}^{k-|V|+1} \e[X_{i_j}]\\
      \le{} & \e[X_{V \cup \{i_1, \dotsc, i_{k-|V|+1}\}}] \cdot \max\{\e[X_i] : i \in V \cup \{i_1, \dotsc, i_{k-|V|+1}\}\}.
    \end{split}
  \]
  In particular, the right-hand side of~\eqref{eq:aVC-approx} is bounded from above by
  \[
    \eps^{-1} \cdot (k-|V|+1)! \cdot \Delta_{k+1}(V) + \eps^{-1} \cdot k^{k-|V|+1} \cdot \delta_{1, k}(V),
  \]
  which yields
  \begin{equation}\label{eq:qrec1}
    \big|q(V,S) - (-1)^{|V|-1}\e[X_V]\cdot (1+\rho+\dotsb+\rho^{k-|V|})\big| \leq K_1 \cdot \big(\Delta_{k+1}(V) + \delta_{1,k}(V)\big)
  \end{equation}
  for some constant $K_1$ that depends only on $k$ and $\eps$.

  We claim that there is a constant $K_2 = K_2(k, \eps)$ such that, for 
  all $0 \leq i \leq k-|V|$,
  \begin{equation}\label{eq:qrec2}
    \e[X_V]\cdot
    \Big|\rho^i - \Big(\sum_{\substack{U\subseteq S, U\att V\\ 1 \leq |U|\leq k-|V|}}
    q(U,S\cap I(V))\Big)^i\Big| \leq K_2 \cdot \big( \delta_{1,K_2}(V) + \Delta_{k+1,K_2}(V) \big).
  \end{equation}
  Observe that \eqref{eq:qrec1} and \eqref{eq:qrec2} imply that
  \[
    | q(V,S) - q^{(k)}(V,S) | \leq K \cdot \big( \delta_{1,K}(V) + \Delta_{k+1,K}(V) \big)
  \]
  for some $K = K(k,\eps)$, giving the assertion of the lemma. It thus remains to prove \eqref{eq:qrec2}.

  We first consider the case $i = 1$. By the Bonferroni inequalities, for every positive $j$,
  \[
    (-1)^{j-1} \cdot \rho \le (-1)^{j-1} \cdot \sum_{\substack{U'\subseteq S \setminus I(V) \\1\leq |U'|\leq j}} (-1)^{|U'|-1}\e[X_{U'}\mid \ol{X}_{S\cap I(V)}=1].
  \]
  Applying Lemma~\ref{lemma:helper} with $k \leftarrow j-|U'|$, $V \leftarrow U'$, and $S \leftarrow S \cap I(V)$,
  we get that for each $U' \subseteq S \setminus I(V)$ with $1\leq |U'| \leq j$,
  \[
    (-1)^{j-|U'|}\e[X_{U'} \mid \ol{X}_{S\cap I(V)}=1] \leq
    \kern-10pt\sum_{\substack{U''\subseteq S\cap I(V), U''\att U'\\ |U''|\leq j-|U'|}}\kern-10pt
    (-1)^{j-1}q(U'\cup U'', S\cap I(V)).
  \]
  Next, observe that any non-empty $U\subseteq S$ with $U\att V$
  of size at most $j$ can be written uniquely as the disjoint union of $U'$ and $U''$, where $U' \subseteq V \cup \partial(V)$
  and $U'' \subseteq I(V)$ and $U''\att U'$. The previous two inequalities then imply that
  \begin{equation}
    \label{eq:lower-upper-x}
    (-1)^{j-1} \cdot \rho \leq (-1)^{j-1} \cdot \sum_{\substack{U\subseteq S, U\att V\\1\leq|U|\leq j}} q(U,S\cap I(V)).
  \end{equation}
  Invoking~\eqref{eq:lower-upper-x} twice, first with $j \leftarrow k-|V|$ and then with $j \leftarrow k-|V|+1$,
  to get both an upper and a lower bound on $\rho$, we obtain
  \begin{equation}\label{eq:qrec3}
    \begin{split}
      \Big|\rho - \sum_{\substack{U\subseteq S, U\att V\\ 1 \leq |U|\leq k-|V|}} q(U,S\cap I(V))\Big| & \leq
      \Big| \sum_{\substack{U\subseteq S, U\att V\\ |U| = k-|V|+1}} q(U,S\cap I(V)) \Big| \\
      & \le \sum_{\substack{U\subseteq S, U\att V\\ |U| = k-|V|+1}} \eps^{-1}\e[X_U],
    \end{split}
  \end{equation}
  where the last inequality uses the definition of $q(U, S\cap I(V))$ and the assumption that $\rho_k \leq 1-\eps$, see
  the discussion below~\eqref{eq:auxqVS}.

  Finally, we show how to deduce~\eqref{eq:qrec2} from \eqref{eq:qrec3}. Let 
  \[
    y = \sum_{\substack{U\subseteq S,U \att V\\1\leq |U|\leq k-|V|}} q(U,S\cap I(V)),
  \]
  so that the left-hand side of~\eqref{eq:qrec2} is $\e[X_V] \cdot |\rho^i - y^i|$,  and observe that, as in~\eqref{eq:qrec3},
  \[
    |y| \leq z := \sum_{\substack{U \att V\\1\leq |U|\leq k-|V|}} \eps^{-1}\e[X_U].
  \]
  Fix an $i \in \{1, \dotsc, k-|V|\}$. Since $|\rho| \le 1$, then
  \[
    |\rho^i - y^i| \leq |\rho-y| \cdot \sum_{j = 0}^{i-1} |\rho^jy^{i-1-j}| \le (1+z)^{i-1} \cdot |\rho - y|,
  \]
  which together with~\eqref{eq:qrec3} implies that
  \[
    \e[X_V]\cdot | \rho^i - y^i | \leq (1+z)^{i-1} \e[X_V] \sum_{\substack{U \att V\\|U|= k-|V|+1}}\eps^{-1}\e[X_U].
  \]
  Note that for pairwise disjoint $U_1, \dotsc, U_j \subseteq [N]$, Harris's inequality gives
  \[
    \prod_{\ell=1}^j \e[X_{U_\ell}] \le \e[ X_{U_1 \cup \dotsc \cup U_j}]
  \]
  and if $U_1, \dotsc, U_j \subseteq [N]$ are not pairwise disjoint, then the stronger FKG lattice condition~\eqref{j:eq:lattice} implies that
  \[
    \prod_{\ell=1}^j \e[X_{U_\ell}] \le \e[ X_{U_1 \cup \dotsc \cup U_j}] \cdot \max\{\e[X_i] : i \in U_1 \cup \dotsc \cup U_j\}.
  \]
  In particular, using a similar reasoning as
  used for deriving the bound~\eqref{eq:qrec1}
  from~\eqref{eq:aVC-approx}, we obtain
  \[
    (1+z)^{i-1} \e[X_V] \sum_{\substack{U \att V\\|U|= k-|V|+1}}\eps^{-1}\e[X_U] \le K_4 \cdot \big(\delta_{1, ik}(V) + \Delta_{k+1, ik+1}(V)\big)
  \]
  for sufficiently large $K_4=K_4(k,\eps)$. This shows \eqref{eq:qrec2} and
  hence the lemma.
\end{proof}

\begin{lemma}\label{lemma:kappavkrec}
  For every $k\in \NN$ there exists some $K=K(k)$ such that,
  for all 
   $V,S\subseteq [N]$ with $1 \leq |V| \leq k$, we have
  \[
    |\kappa^{(k)}_V(S) - \tilde\kappa^{(k)}_V(S)| \leq K \cdot \big( \delta_{1,K}(V) + \Delta_{k+1,K}(V) \big).
  \]
\end{lemma}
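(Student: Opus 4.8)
The plan is to unfold both $\kappa^{(k)}_V(S)$ and $\tilde\kappa^{(k)}_V(S)$ into sums indexed by finite families of ``sub-clusters attached to $V$'', to observe that the two unfoldings share a large common part, and then to bound the remaining ``degenerate'' contributions. Throughout, write $g_V(P) = \sum_{C\in\cut_V(P)}\kappa_C(P)$. By equation~\eqref{eq:a2} in the proof of Lemma~\ref{lemma:kappavrec}, $g_V(P)$ is the joint cumulant of a contracted family of non-decreasing $\{0,1\}$-valued random variables whose joint moment equals $\e[X_P]$; hence, by the crude estimate behind the remark following~\eqref{eq:kappadelta} (Harris's inequality plus counting partitions), $|g_V(P)|\leq C_{|P|}\e[X_P]$, and, for the same reason, $|\kappa_U(W')|\leq C_{|W'|}\e[X_{W'}]$, where $C_m$ depends only on $m$.

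First I would rewrite $\kappa^{(k)}_V(S)$ using Lemma~\ref{lemma:kappavrec}. Substituting the recursion into~\eqref{eq:kappavk}, writing a partition $\pi\in\Pi^{\att}_V(W)$ as $\{V,P_1,\dots,P_i\}$ so that $(-1)^{|\pi|-1}(|\pi|-1)! = (-1)^i i!$, and combining $(-1)^{|W|-1}$ with the $i!$ to turn the sum over unordered partitions into one over ordered tuples, one should arrive at
\[
  \kappa^{(k)}_V(S) = (-1)^{|V|-1}\e[X_V]\sum_{i\geq 0}\ \sum_{\substack{(P_1,\dots,P_i)\ \text{pairwise disjoint},\ \emptyset\neq P_j\subseteq S\setminus V\\ P_j\att V,\ |V|+\sum_j|P_j|\leq k}}\ \prod_{j=1}^i\big((-1)^{|P_j|-1}g_V(P_j)\big).
\]
Next I would expand $\tilde\kappa^{(k)}_V(S)$ from~\eqref{eq:kappatilde} by plugging in the definition~\eqref{eq:kappavk} of $\kappa^{(k-|V|)}_U(S\cap I(V))$ and splitting the inner sum over $U$ according to whether $U\cap V=\emptyset$. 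Swapping the order of summation (first over the ``outer'' set, then over the inner set $U$, which for each admissible outer set ranges exactly over the relevant collection $\cut_V(\cdot)$) and invoking the identity $\sum_{C\in\cut_V(P)}\kappa_C(P) = \sum_{\pi\in\tilde\Pi_V(P)}(-1)^{|\pi|-1}(|\pi|-1)!\mu_\pi$ from~\eqref{eq:a2}, the part with $U\cap V=\emptyset$ should collapse to $F_0 := \sum_{\emptyset\neq P\subseteq S\setminus V,\,P\att V,\,|P|\leq k-|V|}(-1)^{|P|-1}g_V(P)$, so that, writing $F_1$ for the remaining part,
\[
  \tilde\kappa^{(k)}_V(S) = (-1)^{|V|-1}\e[X_V]\sum_{i=0}^{k-|V|}(F_0+F_1)^i.
\]
Comparing the two formulas, $\kappa^{(k)}_V(S)$ is exactly what is left of $\tilde\kappa^{(k)}_V(S)$ after (i)~discarding every occurrence of $F_1$, and (ii)~keeping, among the all-$F_0$ tuples, only those that are pairwise disjoint and satisfy $\sum_j|P_j|\leq k-|V|$.

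It then remains to estimate the discarded terms. There are two kinds. The first is a sum over ``bad'' tuples $(P_1,\dots,P_i)$ with $i\leq k-|V|$ and $\emptyset\neq P_j\subseteq S\setminus V$, $P_j\att V$, $|P_j|\leq k-|V|$ for each $j$, that violate~(ii): either two of the $P_j$ intersect, or they are pairwise disjoint but $\sum_j|P_j|>k-|V|$. The second is a sum of products $\e[X_V]\prod_{j=1}^i G_j$ in which each $G_j$ is either a term $(-1)^{|P|-1}g_V(P)$ (with $P\subseteq S\setminus V$) or a term $(-1)^{|W'|-1}\kappa_U(W')$ coming from $F_1$ (so $U\cap V\neq\emptyset$, $U\subseteq W'$, $|W'|\leq k-|V|$), at least one $G_j$ being of the second type. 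For such a term, put $W = V\cup\bigcup_j P_j$, respectively $W = V\cup\bigcup_j P_j\cup\bigcup_j W'_j$; in all cases $V\subseteq W$, the set $W\setminus V$ attaches to $V$, and $|W|\leq k+k^2 =: K_0$. Then $|g_V(P)|\leq C_{K_0}\e[X_P]$ and $|\kappa_U(W')|\leq C_{K_0}\e[X_{W'}]$ bound the term in absolute value by $C_{K_0}^{\,i}\,\e[X_V]\cdot(\text{product of the relevant }\e[X_{(\cdot)}])$; Harris's inequality gives $\e[X_V]\cdot(\text{that product})\leq\e[X_W]$, and whenever two of the sets in the product overlap --- which happens for the non-disjoint tuples, and always for the $F_1$-terms since $U\cap V\neq\emptyset$ forces $W'$ to meet $V$ --- the FKG lattice condition~\eqref{j:eq:lattice} upgrades this to $\e[X_W]\cdot\max\{\e[X_i]:i\in W\}$. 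Finally, for fixed $W$ the number of bad tuples and $F_1$-terms sent to $W$ is bounded in terms of $k$ alone (all relevant sets lie inside $W$ and $i\leq k$); hence the overlapping tuples and all $F_1$-terms contribute $O\big(\delta_{1,K_0}(V)\big)$, their $W$ satisfying $1\leq|W|\leq K_0$, while the pairwise disjoint tuples with $\sum_j|P_j|>k-|V|$ contribute $O\big(\Delta_{k+1,K_0}(V)\big)$, since then $|W| = |V|+\sum_j|P_j|\geq k+1$. As $\e[X_V]\leq1$, this gives $|\kappa^{(k)}_V(S)-\tilde\kappa^{(k)}_V(S)|\leq K\cdot(\delta_{1,K}(V)+\Delta_{k+1,K}(V))$ for a suitable $K=K(k)$ (take $K\geq K_0$ and $K$ at least the implied constant, and note that the $\delta_i(V)$ and $\Delta_i(V)$ are non-negative).

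I expect the main obstacle to be the bookkeeping in the middle paragraph: checking that substituting Lemma~\ref{lemma:kappavrec} into~\eqref{eq:kappavk} and~\eqref{eq:kappatilde} really produces the two clean ordered-tuple formulas above (with all factorials and signs cancelling as claimed), and --- above all --- pinning down exactly which tuples survive in $\kappa^{(k)}_V(S)$ versus $\tilde\kappa^{(k)}_V(S)$, so that the difference is precisely the ``degenerate'' configurations listed. Once the combinatorics is in place, the estimate is a routine application of Harris's inequality, the FKG lattice condition~\eqref{j:eq:lattice}, and the crude bound $|\kappa(A)|\leq C_{|A|}\Delta(A)$.
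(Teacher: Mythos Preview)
Your approach is correct and essentially identical to the paper's. The paper also expands $\tilde\kappa^{(k)}_V(S)$ as $(-1)^{|V|-1}\e[X_V]\sum_{i\le k-|V|} x^i$ and rewrites $x$ (after swapping the order of summation over $U$ and the outer set $W$) as $\sum_{W} f(W)$ with $f(W)=\sum_{U\in\cut_V(W)}(-1)^{|W|-1}\kappa_U(W)$, then singles out the tuples $(W_1,\dots,W_i)$ that are pairwise disjoint, disjoint from $V$, and have $\sum_j|W_j|\le k-|V|$, and identifies their total contribution with $\kappa^{(k)}_V(S)$ via Lemma~\ref{lemma:kappavrec}; the remainder is bounded exactly as you describe, using Harris and the FKG lattice condition~\eqref{j:eq:lattice}. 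The only cosmetic difference is that where you split the inner sum into $F_0$ and $F_1$ according to whether $U\cap V=\emptyset$, the paper keeps the sum intact (allowing $W$ to meet $V$) and instead absorbs the $W\cap V\neq\emptyset$ contributions into the ``not pairwise disjoint with $V$'' part of the bad tuples; the two organisations are equivalent.
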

\begin{proof}
  Fix $k$, $S$, and $V$ as in the statement of the lemma and let
  \[
    x = \sum_{\substack{U\subseteq S, U\att V\\ 1 \leq |U|\leq k-|V|}} \kappa^{(k-|V|)}_U(S\cap I(V)),
  \]
  so that
  \begin{equation}
    \label{eq:tilde-kappa-x}
    \tilde \kappa_V^{(k)}(S) = (-1)^{|V|-1} \e[X_V](1+x+x^2+\dotsb +x^{k-|V|}).
  \end{equation}
  Using the definition~\eqref{eq:kappavk}, we may rewrite
  \begin{equation}
    \label{eq:x-def-expanded}
    x = \sum_{\substack{U\subseteq S, U\att V\\ 1 \leq |U|\leq k-|V|}} \sum_{\substack{U \subseteq W \subseteq U \cup (S \cap I(V)) \\ W \att U, |W| \le k - |V|}} (-1)^{|W|-1} \kappa_U(W).
  \end{equation}
  Recalling from Definition~\ref{def:cut} that
  \[
    \cut_V(W) = \{ U \subseteq W : U \att V \text{ and } \partial(V) \cap W \subseteq U\},
  \]
  we may switch the order of summation in~\eqref{eq:x-def-expanded} to obtain
  \[
    x = \sum_{\substack{W \subseteq S, W \att V\\ 1 \leq |W|\leq k-|V|}} \sum_{U\in \cut_V(W)}(-1)^{|W|-1} \kappa_U(W).
  \]
  For the sake of brevity, write
  \[
    f(W) = \sum_{U \in \cut_V(W)}(-1)^{|W|-1} \kappa_U(W).
  \]
  We may now rewrite~\eqref{eq:tilde-kappa-x} as
  \begin{equation}
    \label{eq:tilde-kappa-expanded}
    \tilde \kappa_V^{(k)}(S) = (-1)^{|V|-1} \e[X_V] \sum_{i = 0}^{k-|V|} \sum_{\substack{W_1, \dotsc, W_i \subseteq S \\ W_1, \dotsc, W_i \att V \\ 1 \leq |W_1|, \dotsc, |W_i| \leq k-|V| }} f(W_1) \cdot \dotsc \cdot f(W_i).
  \end{equation}
  
  Consider first the total contribution $\tilde \kappa_1$ to the right-hand side of~\eqref{eq:tilde-kappa-expanded} coming from terms corresponding to $W_1, \dotsc, W_i \subseteq S \setminus V$ that are pairwise disjoint and whose union has size at most $k - |V|$. Each such term may be regarded as a partition of the set $W = V \cup W_1 \cup \dotsc \cup W_i$, which satisfies $V \subseteq W \subseteq S$ and $|W| \leq k$; this partition $\{V, W_1, \dotsc, W_i\}$ belongs to $\Pi^{\att}_V(W)$. Conversely, given a $W$ with these properties, every partition $\pi \in \Pi^{\att}_V(W)$ corresponds to exactly $(|\pi|-1)!$ such terms; this is the number of ways to order the elements of $\pi \setminus \{V\}$ as $W_1, \dotsc, W_i$. Therefore,
  \[
    \tilde \kappa_1 = (-1)^{|V|-1}\e[X_V] \sum_{\substack{V\subseteq W \subseteq V \cup S \\ W \att V, |W|\leq k}}
    \sum_{\pi\in \Pi^{\att}_V(W)} (|\pi|-1)!\prod_{\substack{P\in \pi\\P\neq V}}f(P).
  \]
  In particular, Lemma~\ref{lemma:kappavrec} gives
  \[ 
    \tilde \kappa_1 = (-1)^{|V|-1} \sum_{\substack{V \subseteq W \subseteq V \cup S \\ W \att V, |W| \leq k}} (-1)^{|W|-|V|} \kappa_V(W) = \kappa^{(k)}_V(S).
  \]
  Every term in the right-hand side of~\eqref{eq:tilde-kappa-expanded} corresponding to $W_1, \dotsc, W_i$ that is not included in $\tilde \kappa_1$
  either satisfies $|V \cup W_1 \cup \dotsc \cup W_i| > k$ or the sets $V, W_1, \dotsc, W_i$ are not pairwise disjoint.
  Let $\tilde \kappa_2 = \tilde \kappa_V^{(k)}(S)-\tilde\kappa_1$ denote the total contribution of these terms. Since for every $W$, Harris's inequality implies
  \[
    |f(W)| \le \sum_{U \subseteq W} |\kappa_U(W)| \le \sum_{\pi \in \Pi(W)} |\pi|! \mu_\pi  \le |W|^{|W|} \e[X_W],
  \]
  there is a constant $K_1$ that depends only on $k$ such that
  \[
    |\tilde \kappa_2| \le K_1 \e[X_V] \sum_{W_1, \dotsc, W_i} \prod_{j=1}^i \e[X_{W_j}],
  \]
  where the sum ranges over all $i \leq k-|V|$ and $W_1, \dotsc, W_i \subseteq S$, each of size at most $k-|V|$ and attaching to $V$, such that either $|V \cup W_1 \cup \dotsc \cup W_i| > k$ or the sets $V, W_1, \dotsc, W_i$ are not pairwise disjoint. An argument analogous to the one given at the end of the proof of Lemma~\ref{lemma:qrec}, employing Harris's inequality and the stronger FKG lattice condition~\eqref{j:eq:lattice}, gives
  \[
    |\tilde \kappa_2| \le K \cdot \big( \delta_{1,K}(V) + \Delta_{k+1,K}(V) \big)
  \]
  for some $K$ that depends only on $k$.
\end{proof}

\begin{lemma}\label{lemma:bound}
  Let $k\in \NN$ be such that $\rho_k\leq 1-\eps$.
  Then there exists
  $K = K(k,\eps)$ such that for all $V,S\subseteq [N]$ with $1 \leq |V|\leq
  k$, we have
  \[
    |q(V,S) - \kappa^{(k)}_V(S)| \leq K \cdot \big( \delta_{1,K}(V)+\Delta_{k+1,K}(V) \big).
  \]
\end{lemma}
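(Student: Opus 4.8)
The plan is to prove the lemma by strong induction on $k$. The starting point is that $q(V,S)$ is approximated by $q^{(k)}(V,S)$ (Lemma~\ref{lemma:qrec}) and $\kappa^{(k)}_V(S)$ by $\tilde\kappa^{(k)}_V(S)$ (Lemma~\ref{lemma:kappavkrec}), while $q^{(k)}(V,S)$ and $\tilde\kappa^{(k)}_V(S)$ are obtained by applying the \emph{same} truncated geometric polynomial $P(t)=\sum_{0\le i\le k-|V|}t^i$ to the two quantities
\[
  y=\sum_{\substack{U\subseteq S,\ U\att V\\ 1\le|U|\le k-|V|}} q(U,S\cap I(V))
  \quad\text{and}\quad
  x=\sum_{\substack{U\subseteq S,\ U\att V\\ 1\le|U|\le k-|V|}} \kappa^{(k-|V|)}_U(S\cap I(V)),
\]
that is, $q^{(k)}(V,S)=(-1)^{|V|-1}\e[X_V]P(y)$ and $\tilde\kappa^{(k)}_V(S)=(-1)^{|V|-1}\e[X_V]P(x)$. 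Since the error terms furnished by Lemmas~\ref{lemma:qrec} and~\ref{lemma:kappavkrec} already have the shape $K(\delta_{1,K}(V)+\Delta_{k+1,K}(V))$, the triangle inequality reduces everything to bounding $\e[X_V]\,|P(y)-P(x)|$ by an expression of that same shape.

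The base case is $|V|=k$ (which also covers $k=1$): then $k-|V|=0$, so $P\equiv1$ and $q^{(k)}(V,S)=\tilde\kappa^{(k)}_V(S)=(-1)^{|V|-1}\e[X_V]$, with nothing left to estimate. For the inductive step assume $|V|<k$ and that the lemma holds for every budget smaller than $k$; this is legitimate since $\rho_{k'}\le\rho_k\le1-\eps$ for all $k'\le k$, so the hypothesis is inherited. Every $U$ in the sums above has $1\le|U|\le k-|V|<k$, so the inductive hypothesis with budget $k-|V|$ and the pair $(U,S\cap I(V))$ bounds $|q(U,S\cap I(V))-\kappa^{(k-|V|)}_U(S\cap I(V))|$ by $K'(\delta_{1,K'}(U)+\Delta_{k-|V|+1,K'}(U))$ for some $K'=K'(k,\eps)$; summing over $U$ bounds $|y-x|$. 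Crude estimates — $|q(U,\cdot)|\le\e[X_U]/\eps$ exactly as in the proof of Lemma~\ref{lemma:qrec}, and $|\kappa^{(k-|V|)}_U(\cdot)|\le\sum_W|\kappa_U(W)|\le C\e[X_U]$ via Harris's inequality (using $|\kappa_U(W)|\le|W|^{|W|}\e[X_W]$, and, after reindexing by $W$, that attachment is transitive in the sense $W\att U$, $U\att V\Rightarrow W\att V$) — show that $|x|,|y|\le C''z$, where $z=\sum_{W\att V,\ 1\le|W|\le k}\e[X_W]$ and $C''=C''(k,\eps)$. Using $y^i-x^i=\sum_{0\le j<i}y^j(y-x)x^{i-1-j}$ and $|P(y)-P(x)|\le\sum_{1\le i\le k-|V|}|y^i-x^i|$, one obtains that $\e[X_V]\,|P(y)-P(x)|$ is at most a constant depending on $k,\eps$ times $\e[X_V]\,z^{k-|V|-1}\,|y-x|$.

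It remains to expand $z^{k-|V|-1}$ and the bound on $|y-x|$ into sums. A generic summand is $\e[X_V]$ times a product $\e[X_{W_1}]\cdots\e[X_{W_m}]\e[X_{U\cup U'}]$, carrying an extra factor $\max\{\e[X_i]:i\in U\cup U'\}$ when it originates from the $\delta_{1,K'}(U)$-part of the inductive hypothesis, where $m\le k$, each $W_\ell\att V$ is of bounded size, $U\att V$, $U'\att U$, and $|U\cup U'|\le K'$. Since attachment is transitive and unions of sets attaching to $V$ attach to $V$, the set $V\cup W_1\cup\dots\cup W_m\cup U\cup U'$ equals $V\cup W$ for some $W\att V$ with $|V\cup W|\le R=R(k,\eps)$. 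Harris's inequality bounds $\e[X_V]\e[X_{W_1}]\cdots\e[X_{W_m}]\e[X_{U\cup U'}]$ by $\e[X_{V\cup W}]$ when the sets $V,W_1,\dots,W_m,U\cup U'$ are pairwise disjoint, and the FKG lattice condition~\eqref{j:eq:lattice} bounds it by $\e[X_{V\cup W}]\max\{\e[X_i]:i\in V\cup W\}$ otherwise. Hence every summand carrying a $\max$-factor — from an overlap or from the $\delta$-part of the inductive hypothesis — is dominated by a summand of $\delta_{1,R}(V)$, while every remaining summand comes from the $\Delta_{k-|V|+1,K'}(U)$-part with $V,W_1,\dots,W_m,U\cup U'$ pairwise disjoint, so that $|V\cup W|\ge|V|+|U\cup U'|\ge|V|+(k-|V|+1)=k+1$ and it is dominated by a summand of $\Delta_{k+1,R}(V)$. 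As the map sending a tuple $(W_1,\dots,W_m,U,U')$ to its associated set $V\cup W$ (together with the index achieving the maximum, when relevant) is boundedly-many-to-one, the whole expression is at most a constant times $\delta_{1,R}(V)+\Delta_{k+1,R}(V)$, and taking $K=K(k,\eps)$ large enough to absorb this constant, $K'$, and the constants from Lemmas~\ref{lemma:qrec} and~\ref{lemma:kappavkrec} closes the induction. The only genuinely delicate point — and the main, though routine, obstacle — is precisely this last size count: one must verify that the $\Delta$-type contributions always have order at least $k+1$, which rests on the inductive hypothesis producing $\Delta$-terms of order at least $(k-|V|)+1$ for the inner set $U$ and on the fact that adjoining the disjoint set $V$ raises the order by exactly $|V|$, any failure of disjointness being harmless because it is automatically offset by a $\max$-factor that relocates the term into $\delta_{1,R}(V)$, which is collected at all orders.
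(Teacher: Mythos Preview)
Your argument is essentially the paper's: strong induction on $k$, reduction via the triangle inequality and Lemmas~\ref{lemma:qrec} and~\ref{lemma:kappavkrec} to bounding $\e[X_V]\,|P(y)-P(x)|$, and control of the latter through the inductive hypothesis together with Harris/FKG, exactly as in the paper (which writes $|x^i-y^i|\le w(2z+w)^{i-1}$ instead of your telescoping identity). One slip worth fixing: the intermediate inequality $\sum_W|\kappa_U(W)|\le C\,\e[X_U]$ is false in general (there can be unboundedly many $W\supseteq U$ with $W\att U$), but your target bound $|x|\le C''z$ is still correct if you first sum over $U$ and then reindex by $W$ as your parenthetical indicates --- or, more simply, follow the paper and use $|x|\le|y|+|x-y|\le z+w$, which sidesteps the need to bound $|x|$ directly.
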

\begin{proof}
  We prove the lemma by complete induction on $k$. To this end, let $k\geq 0$ and suppose that
  the statement holds for all $k' \in \NN$ with $k' < k$. By the triangle inequality
  \[\begin{split}
    |q(V,S) - \kappa^{(k)}_V(S)| &\leq |q(V,S)-q^{(k)}(V,S)|\\
    &\qquad\qquad + |q^{(k)}(V,S) - \tilde \kappa^{(k)}_V(S)|\\
    &\qquad\qquad\qquad\qquad + |\tilde \kappa^{(k)}_V(S) - \kappa^{(k)}_V(S)|.
  \end{split}\]
  Lemmas~\ref{lemma:qrec} and~\ref{lemma:kappavkrec} imply that
  \[
    |q(V,S)-q^{(k)}(V,S)| + |\tilde \kappa^{(k)}_V(S) - \kappa^{(k)}_V(S)| \leq K_1 \cdot \big(\delta_{1,K_1}(V) + \Delta_{k+1,K_1}(V)\big)
  \]
  for some sufficiently large $K_1 = K_1(k,\eps)$ and thus it suffices to show that there is some $K_2= K_2(k,\eps)$ such that
  \begin{equation}\label{eq:z}
    |q^{(k)}(V,S) - \tilde \kappa^{(k)}_V(S)| \leq K_2 \cdot \big(\delta_{1,K_2}(V) + \Delta_{k+1,K_2}(V)\big).
  \end{equation}

  To this end, observe first that since $k-|V|<k$, then the induction hypothesis states that there is a constant $K' = K'(k, \eps)$ such that
  \begin{equation}\label{eq:z1}
    \big|q(U,S\cap I(V)) - \kappa_U^{(k-|V|)}(S\cap I(V))\big| \leq K' \cdot \big(\delta_{1,K'}(U) + \Delta_{k-|V|+1,K'}(U)\big)
  \end{equation}
  for all $U$ such that $1\leq|U|\leq k-|V|$. Let
  \[
    x = \sum_{\substack{U\subseteq S, U\att V\\ 1 \leq |U|\leq k-|V|}} \kappa^{(k-|V|)}_U(S\cap I(V))
  \]
  and, as in the proof of Lemma~\ref{lemma:qrec},
  \[
    y = \sum_{\substack{U\subseteq S, U\att V\\ 1 \leq |U|\leq k-|V|}} q(U,S\cap I(V)).
  \]
  Observe that
  \[
    |y| \le z:= \sum_{\substack{U \att V\\1\leq |U|\leq k-|V|}} \eps^{-1}\e[X_U],
  \]
  as in the proof of Lemma~\ref{lemma:qrec}, and that~\eqref{eq:z1} implies that
  \begin{equation}\label{eq:z2}
    |x-y| \leq w: = K' \cdot \sum_{\substack{U\att V\\1\leq |U|\leq k-|V|}} \big(\delta_{1,K'}(U) + \Delta_{k-|V|+1,K'}(U)\big).
  \end{equation}
  For any $i \ge 1$, we have
  \[
    |x^i - y^i| \le |x-y| \cdot \sum_{j=0}^{i-1} |x^j y^{i-1-j}| \le |x-y| \cdot (|x|+|y|)^{i-1} \le w (2z+w)^{i-1}.
  \]
  It follows that
  \begin{equation}
    \label{eq:z3}
    |q^{(k)}(V,S) - \tilde \kappa^{(k)}_V(S)| \le \sum_{1 \leq i \leq k-|V|} \e[X_V] \cdot w (2z+w)^{i-1}.
  \end{equation}
  Similarly as in the proofs of Lemmas~\ref{lemma:qrec} and~\ref{lemma:kappavkrec}, one sees that the FKG lattice condition~\eqref{j:eq:lattice}
  implies that the right hand side of~\eqref{eq:z3} is bounded from above by $K_2 \cdot \big( \delta_{1,K_2}(V) + \Delta_{k+1,K_2}(V) \big)$,
  provided $K_2=K_2(k, \eps)$ is sufficiently large, as claimed.
\end{proof}

\begin{proof}[Proof of Lemma \ref{lemma:top2}]
  It follows from Lemma~\ref{lemma:bound} that there is $K_1=K_1(k,\eps)$ such that
  \begin{multline}
    \Big| \sum_{\ell\in [N]}\sum_{i\in [k]} \sum_{V\in \mathcal C_i(\ell)}
    \big(q(V,[\ell-1]) - \kappa^{(k)}_V(S)\big)\Big|\\
    \leq
    \sum_{\ell\in [N]}\sum_{i\in [k]}
    \sum_{V\in \mathcal C_i(\ell)}
    K_1 \cdot \big(\delta_{1,K_1}(V)+\Delta_{k+1,K_1}(V)\big).\
  \end{multline}
  But if we choose $K$ sufficiently large then the right-hand side is
  at most $K \cdot \big(\delta_{1,K} + \Delta_{k+1,K}\big)$, as required.
\end{proof}

\subsection{Proof of Lemma \ref{lemma:top3}}

Fix an integer $k$ and an $\ell \in [N]$. Recalling~\eqref{eq:kappavk}, we rewrite the $\ell$th term of the sum
from the statement of the lemma as follows:
\[
  \sum_{i\in [k]}\sum_{V\in \mathcal C_i(\ell)} \kappa_V^{(k)}([\ell-1])
  = \sum_{i\in [k]} \sum_{V\in \mathcal C_i(\ell)} \sum_{\substack{V\subseteq W\subseteq V\cup [\ell-1]\\ W \att V\\|W| \leq k}} (-1)^{|W|-1} \kappa_V(W).
\]
It follows from Definition~\ref{def:att} that if $V$ is connected then $W \att V$ if and only if $W$ is connected. Therefore,
changing the order of the last two sums in the right-hand side of the above identity yields
\begin{equation}
  \label{eq:large-sum-kappa-W}
  \sum_{i\in [k]} \sum_{V\in \mathcal C_i(\ell)} \kappa_V^{(k)}([\ell-1])
  = \sum_{i\in [k]}\sum_{W\in \mathcal C_i(\ell)} \sum_{V\in \mathcal C_W} (-1)^{|W|-1}\kappa_V(W),
\end{equation}
where $\mathcal C_W$ denotes the collection of all connected sets $V \subseteq W$ satisfying $\max V = \max W$.

We claim that for each $W \in \mathcal C_i(\ell)$,
\begin{equation}
  \label{eq:kappa-W-kappa_V-W}
  \kappa(W) = \sum_{V\in \mathcal C_W}\kappa_V(W).
\end{equation}
Observe first that establishing this claim completes the proof of the lemma. Indeed, substituting~\eqref{eq:kappa-W-kappa_V-W} into~\eqref{eq:large-sum-kappa-W} and summing over all $\ell$ gives
\[
  \begin{split}
  \sum_{\ell \in [N]} \sum_{i\in [k]} \sum_{V\in \mathcal C_i(\ell)} \kappa_V^{(k)}([\ell-1]) & = \sum_{i\in [k]} \sum_{\ell \in [N]} \sum_{W\in \mathcal C_i(\ell)} (-1)^{|W|-1}\kappa(W) \\
  & = \sum_{i \in [k]} (-1)^{i-1} \sum_{W \in \mathcal C_i} \kappa(W) = \sum_{i \in [k]} (-1)^{i-1} \kappa_i.
  \end{split}
\]

Therefore, we only need to prove the claim. To this end, fix a $W \in \mathcal C_i(\ell)$. Recalling~\eqref{eq:kappa'} and~\eqref{eq:kappav}, it clearly suffices
to show that $\{\pic_V(W) : V \in \mathcal C_W\}$ is a partition of $\Pi(W)$. Obviously, $\pic_V(W) \subseteq \Pi(W)$ for each $V \in \mathcal C_W$. Conversely,
given an arbitrary $\pi \in \Pi(W)$, let $P \in \pi$ be the part containing $\max W$ and let $V$ be the connected component of $\max W$ in $G_\Gamma[P]$. Clearly,
$V \in \mathcal C_W$ and $\pi \in \pic_V(W)$. Moreover, the connected component of $\max W$ in $G_\Gamma[P]$ is the only set $V$ with this property, and so
the sets $\pic_V(W)$ and $\pic_U(W)$ are disjoint for distinct $U, V \in \mathcal C_W$.

\section{Computations}
\label{j:sec:applications}

The goal of this section is to carry out the necessary
computations for proving Corollaries~\ref{cor:k3c4}, \ref{cor:k3},
and~\ref{cor:3ap}. 

\subsection{Corollaries~\ref{cor:k3c4} and \ref{cor:k3}}

Assume that $\mathcal F = \{F_1,\dotsc,F_t\}$ is a collection of pairwise non-isomorphic $r$-graphs without isolated vertices
and let the associated hypergraph $\Gamma=(\Omega,\cX)$ be defined as in
Section~\ref{ssec:graphs}.
To prove Corollaries~\ref{cor:k3c4} and \ref{cor:k3}, we need to compute the
quantities $\kappa_k$ for small values of
$k$. This can be done using the following general approach: We first enumerate all
`isomorphism types' of
clusters in $\mathcal C_k$. Then we compute the joint cumulant for each
isomorphism type. Finally we multiply each value with the size of the
respective isomorphism class. This is made more precise as follows.

\begin{definition}
  An \emph{$\mathcal F$-complex} is a non-empty set of subgraphs of $K_n$, each
  of which is isomorphic to a graph in $\mathcal F$. An $\mathcal F$-complex
  $B$ is \emph{irreducible} if it cannot be written as the union of two
  $\mathcal F$-complexes $B_1$ and $B_2$ where every graph in $B_1$ is
  edge-disjoint from every graph in $B_2$. The set of all irreducible
  $\mathcal F$-complexes of cardinality $k$ is denoted by $\mathcal
  C_k(\mathcal F)$. The \emph{underlying graph} $G_B$ of an $\mathcal F$-complex $B$ is
  the subgraph of $K_n$ formed by taking the union of (the edge sets of) the
  graphs in $B$.
\end{definition}

Note that there is a natural bijection $\phi$ between the sets $V \subseteq
[N]$
of size $k$ and the $\mathcal F$-complexes of size $k$: $\phi$ maps  $V=
\{i_1,\dotsc,i_k\}$ to the
$\mathcal F$-complex $B = \{G_1,\dotsc,G_k\}$, where $G_j$ is the subgraph of
$K_n$ spanned by the edges in $\gamma_{i_j}$ (recall that $\gamma_{i_j}$ is a set of edges in $K_n$
and that we assume that none of
the graphs in $\mathcal F$ have isolated vertices). Note also that $\phi|_{\mathcal C_k}$ is a bijection
between $\mathcal C_k$ and $\mathcal C_k(\mathcal F)$. We can therefore write
$\kappa(B)$ for the joint cumulant of $\{X_i : i\in \phi^{-1}(B)\}$
without ambiguity, obtaining
\[
  \kappa_k = \sum_{B\in \mathcal C_k(\mathcal F)} \kappa(B).
\]
Using \eqref{j:eq:kappa} we easily express $\kappa(B)$ in terms of $G_B$:
\begin{equation}\label{eq:kappac}
  \kappa(B) = \sum_{\pi\in \Pi(B)} (|\pi|-1)!(-1)^{|\pi|-1} \prod_{B' \in \pi} p^{e_{G_{B'}}}.
\end{equation}

\begin{definition}
  Let $B_1$ and $B_2$ be $\mathcal F$-complexes. A map
  $f\colon V(G_{B_1}) \to V(G_{B_2})$ is a \emph{homomorphism} from $B_1$ to
  $B_2$ if for every graph $H \in B_1$, the graph $f(H)$
  (with vertex set $f(V(H))$ and edge set $\{\{f(u),f(v)\}
  : \{u,v\}\in E(H)\}$)
  belongs to $B_2$.
  If $f$ is bijective and both $f$ and $f^{-1}$
  are homomorphisms, then $f$ is
  an \emph{isomorphism}. We denote by $\Aut(B)$ the group of automorphisms of $B$,
  that is of isomorphisms from $B$ to $B$.
\end{definition}

It is easy to see that $\kappa$ assigns equal values to isomorphic $\mathcal F$-complexes. The following simple lemma can then be used to compute the values $\kappa_k$. In the sequel, we will denote by $n^{\ul i}$ the falling factorial $n(n-1)\dotsb(n-i+1)$.

\begin{lemma}\label{lemma:iso}
  Let $\mathcal C_k(\mathcal F)/{\cong}$ be the set of isomorphism types of
  $\mathcal F$-complexes in $\mathcal C_k(\mathcal F)$.
  Then
  \[ \sum_{B\in \mathcal C_k(\mathcal F)} \kappa(B)
  = \sum_{[B]\in \mathcal C_k(\mathcal F)/{\cong}} \kappa(B)
  \cdot \frac{n^{\ul {v_{G_B}}}}{|\Aut(B)|}.\]
\end{lemma}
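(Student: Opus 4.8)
\textbf{Proof plan for Lemma~\ref{lemma:iso}.}
The plan is to group the terms of the left-hand sum according to isomorphism type and show that each isomorphism class $[B] \in \mathcal C_k(\mathcal F)/{\cong}$ contributes exactly $n^{\ul{v_{G_B}}}/|\Aut(B)|$ many $\mathcal F$-complexes, all of which share the same value of $\kappa$. The latter fact is immediate from~\eqref{eq:kappac}: an isomorphism $f\colon B_1 \to B_2$ induces a graph isomorphism $G_{B_1} \cong G_{B_2}$ that maps each $B' \subseteq B_1$ bijectively onto a subset of $B_2$ with isomorphic (hence equi-edged) underlying graph, so the sum over partitions in~\eqref{eq:kappac} is preserved. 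Thus $\kappa(B)$ depends only on the isomorphism type $[B]$, and it remains to count $|[B]|$, the number of $\mathcal F$-complexes in $\mathcal C_k(\mathcal F)$ that are isomorphic to a fixed representative $B$ with underlying graph $G_B$ on $v_{G_B}$ vertices.

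For the counting step, I would fix a representative $B$ with $V(G_B) \subseteq [n]$ and consider the set of injective maps $g\colon V(G_B) \to [n]$; there are exactly $n^{\ul{v_{G_B}}}$ of these. Each such $g$ pushes $B$ forward to an $\mathcal F$-complex $g(B) = \{g(H) : H \in B\}$, which is isomorphic to $B$ and has $\max$-index irrelevant here (it lies in $\mathcal C_k(\mathcal F)$ since irreducibility is preserved under isomorphism). This defines a map from injections to $[B]$, which is clearly surjective: any $\mathcal F$-complex isomorphic to $B$ arises as $g(B)$ for some isomorphism $g$, which in particular is an injection on $V(G_B)$. The fibre of this map over a given $g(B)$ consists of those injections $g'$ with $g'(B) = g(B)$; composing with $g^{-1}$ shows that $g^{-1} \circ g'$ restricts to an automorphism of $B$ on $V(G_B)$, so the fibre has size exactly $|\Aut(B)|$. (Here one uses that $G_B$ has no isolated vertices, so an injection $g'$ with $g'(B) = g(B)$ is determined by its action on $V(G_B)$ and this action is forced to lie in $\Aut(B)$ after the correction by $g^{-1}$.) Therefore $|[B]| = n^{\ul{v_{G_B}}}/|\Aut(B)|$.

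Combining the two observations gives
\[
  \sum_{B \in \mathcal C_k(\mathcal F)} \kappa(B)
  = \sum_{[B] \in \mathcal C_k(\mathcal F)/{\cong}} |[B]| \cdot \kappa(B)
  = \sum_{[B] \in \mathcal C_k(\mathcal F)/{\cong}} \frac{n^{\ul{v_{G_B}}}}{|\Aut(B)|} \cdot \kappa(B),
\]
as claimed. The only mild subtlety — and the step I would be most careful about — is the fibre computation: one must check that the action of an injection $g'$ on $V(G_B)$ really is recoverable from the $\mathcal F$-complex $g'(B)$ (which is where the no-isolated-vertices hypothesis on the members of $\mathcal F$, and hence on $G_B$, is used) and that $\Aut(B)$ as a group of permutations of $V(G_B)$ acts freely on the set of injections by precomposition, so that all fibres genuinely have the same cardinality $|\Aut(B)|$. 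Everything else is bookkeeping.
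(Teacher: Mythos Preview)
Your proposal is correct and follows exactly the same orbit-counting idea as the paper's proof, only spelled out in more detail: the paper's argument is simply that placing the $v_{G_B}$ vertices of a representative into $[n]$ in $n^{\ul{v_{G_B}}}$ ways overcounts each element of the isomorphism class by a factor of $|\Aut(B)|$. Your careful treatment of the fibre computation (and the role of the no-isolated-vertices assumption) is a welcome elaboration, but there is no substantive difference in approach.
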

\begin{proof}
  For each isomorphism type $[B]$, there are $n^{\ul{v_{G_B}}}$ ways to place the
  vertices of $G_B$ into $K_n$; this way, every element of $\mathcal C_k(\mathcal F)$
  isomorphic to $B$ is counted once for every automorphism of $B$.
\end{proof}

\begin{proof}[Proof of Corollary \ref{cor:k3c4}]
  Suppose that $\mathcal F= \{K_3,C_4\}$ and that $p= o(n^{-4/5})$. Since both $K_3$ and $C_4$ are $2$-balanced and
  \[
    \min\big\{m_2(K_3), m_2(C_4)\big\} = \min\{2, 3/2\} \geq 5/4,
  \]
  we can apply Corollary~\ref{cor:balanced-graphs}, which states that the probability that $G_{n,p}$ is simultaneously $K_3$-free and $C_4$-free is
  \[
    \exp\big(-\kappa_1+\kappa_2-\kappa_3+O(\Delta_4)+o(1)\big).
  \]

  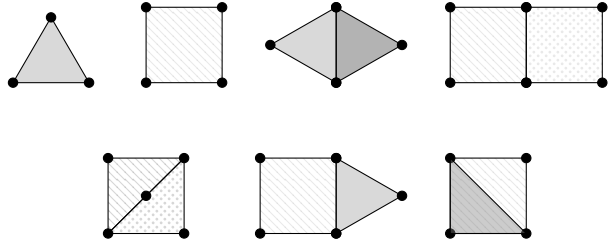
\begin{figure}
    \centering
    \begin{tikzpicture}[scale=1]
      \begin{scope}[shift={(-0.25,0)}]
        \mytriangle{(0,0)}{0}{fill=gray}
      \end{scope}
      \begin{scope}[shift={(1.5,0)}]
        \myquad{(0,0)}{0}{pattern=north west lines}
      \end{scope}
      \begin{scope}[shift={(4,0)}]
        \mytriangle{(0,0)}{90}{fill=gray}
        \mytriangle{(0,0)}{30}{fill=black}
      \end{scope}
      \begin{scope}[shift={(6.5,0)}]
        \myquad{(0,0)}{90}{pattern=north west lines}
        \myquad{(0,0)}{0}{pattern=crosshatch dots}
      \end{scope}
      \begin{scope}[shift={(1,-2)}]
        \coordinate (a) at (0,0);
        \coordinate (b) at (1,0);
        \coordinate (c) at (1,1);
        \coordinate (d) at (0,1);
        \coordinate (x) at (0.5,0.5) {};
        \draw[fill opacity=0.4,pattern=north west lines] (a) -- (x) -- (c) -- (d) -- cycle;
        \draw[fill opacity=0.4,pattern=crosshatch dots] (a) -- (x) -- (c) -- (b) -- cycle;
        \node at (a) {};
        \node at (b) {};
        \node at (c) {};
        \node at (d) {};
        \node at (x) {};
      \end{scope}
      \begin{scope}[shift={(4,-2)}]
        \myquad{(0,0)}{90}{pattern=north west lines}
        \mytriangle{(0,0)}{30}{fill=gray}
      \end{scope}
      \begin{scope}[shift={(5.5,-2)}]
        \myquad{(0,0)}{0}{pattern=north west lines}
        \draw[fill opacity=0.4,fill=gray] (0,0) -- (0,1) -- (1,0) -- cycle;
      \end{scope}
    \end{tikzpicture}
    \caption{The irreducible $\{K_3,C_4\}$-complexes of size at most two.
    Copies of $K_3$ are represented by the grey triangles 
    and copies of $C_4$ by the hatched or dotted $4$-cycles.}
    \label{fig:k3c4}
  \end{figure}

  Figure~\ref{fig:k3c4} shows all seven non-isomorphic irreducible
  $\mathcal F$-complexes of size at most two.
  Using Lemma~\ref{lemma:iso}, the contribution
  to $\kappa_k$ from a given $\mathcal F$-complex $B$ of size $k$ is
  \[ \kappa(B)\cdot\frac{n^{\ul{v_{G_B}}}}{|\Aut(B)|}. \]
  For the complexes shown in Figure~\ref{fig:k3c4}, we can easily calculate
  $|\Aut(B)|$ manually; going through the figure
  from the top left to the bottom right, we obtain the values
  \[ 6, 8, 4, 4, 4, 2, 2. \] Therefore
  \[ \kappa_1 = \frac{n^{\ul 3}p^3}{6} + \frac{n^{\ul 4}p^4}{8} \]
  and, since $p=o(n^{-4/5})$,
  \begin{align*}
    \kappa_2 & = \frac{n^{\ul 4}(p^5-p^6)}{4} + \frac{n^{\ul 6}(p^7-p^8)}{4} +
                 \frac{n^{\ul 5}(p^6-p^8)}{4}
             + \frac{n^{\ul 5}(p^6-p^7)}{2}
                 + \frac{n^{\ul 4}(p^5-p^7)}{2}\\
             & = \frac{n^{\ul 6}p^7}{4} + \frac{3n^{\ul 5}p^6}{4} +o(1).
  \end{align*}

  When calculating $\kappa_3$, we first observe that the underlying graphs of
  the third $\mathcal F$-complex and the fifth $\mathcal F$-complex in
  Figure~\ref{fig:k3c4} each contain a $C_4$ that is not already part of the
  complex and that the graph of the bottom right $\mathcal F$-complex contains
  a triangle that is not a part of the complex. Let $\kappa_3'$ denote the
  contribution of the two $\mathcal F$-complexes of size three that are
  obtained
  from one of these three complexes of size two by adding the `extra' $C_4$ or $K_3$. Then
  \[
    \kappa_3' = \frac{n^{\ul 4}(p^5-2p^8-p^9+2p^{10})}{4} + \frac{n^{\ul
    5}(p^6-3p^{10}+2p^{12})}{12} = \frac{n^{\ul 5}p^6}{12} + o(1).
  \]
  On the other hand, the contribution of every other $\mathcal F$-complex of to $\kappa_3$ is at most in the order
  of $(p + np^2 +n^2p^3)\cdot \kappa_2$, because, except in the two cases mentioned above, the graph of a complex
  of size three is obtained from the graph of a complex of size two by adding either a new edge, or a new vertex and
  two new edges, or two new vertices and three new edges. Using the assumption $p = o(n^{-4/5})$, we get 
  \[
    (p + np^2 +n^2p^3)\cdot \kappa_2 = O(n^6p^8 + n^5p^7 + n^7p^9 + n^8p^{10}) = o(1),
  \]
  and therefore
  \[
    \kappa_3 = \frac{n^{\ul 5}p^6}{12} +o(1).
  \]
  Since the $\mathcal F$-complexes accounted for by $\kappa_3'$
  are `complete' (in the sense that their graphs do not contain
  either a $K_3$ or a $C_4$ that is not already a part of the complex),
  a similar reasoning shows that
  \[
    \Delta_4 \leq O\big((p+np^2+n^2p^3) \cdot \kappa_3'\big) + O\big((1+p+np^2+n^2p^3) \cdot (\kappa_3 - \kappa_3')\big) = o(1).
  \]
  Since our assumption on $p$ implies that $\max\{\kappa_1, \kappa_2,
  \kappa_3\} = o(n)$, we can replace $n^{\ul i}$
  by $n^i$
  in the expressions for $\kappa_1,\kappa_2,\kappa_3$, incurring only an additive error of $o(1)$. Thus the probability that $G_{n,p}$ with $p = o(n^{-4/5})$
  is simultaneously  triangle-free and $C_4$-free is asymptotically
  \[
    \exp\Big(- \frac{n^3p^3}{6} - \frac{n^4p^4}{8} + \frac{n^6p^7}{4} + \frac{2n^5p^6}{3}\Big),
  \]
  as claimed.
\end{proof}

\begin{proof}[Proof of Corollary \ref{cor:k3}]
  Suppose that $\mathcal F= \{K_3\}$ and $p= o(n^{-7/11})$. Since $K_3$ is
  $2$-balanced and $m_2(K_3) = 2 \geq 11/7$,
  we can apply Corollary~\ref{cor:balanced-graphs}, which tells us that the probability that $G_{n,p}$ is triangle-free is
  \[
    \exp\big(-\kappa_1+\kappa_2-\kappa_3+\kappa_4+O(\Delta_5)+o(1)\big).
  \]

  In Figure~\ref{fig:k3} we see representations of all isomorphism types of irreducible $\mathcal F$-complexes of size up to four.
  Generating a similar list of complexes of size five would most likely require the help of a computer.

  \begin{figure}
    \centering
    \begin{tikzpicture}[scale=1]
      \begin{scope}[shift={(-0.5,0.5)}]
        \mytriangle{(0,0)}{0}{fill=gray}
      \end{scope}
      \begin{scope}[shift={(1.5,0.5)}]
        \mytriangle{(0,0)}{90}{fill=gray}
        \mytriangle{(0,0)}{30}{fill=black}
      \end{scope}
      \begin{scope}[shift={(3.5,0.5)}]
        \mytriangle{(0,0)}{120}{fill=gray}
        \mytriangle{(0,0)}{60}{fill=black}
        \mytriangle{(0,0)}{0}{pattern=north west lines}
      \end{scope}
      \begin{scope}[shift={(5.5,0.5)}]
        \coordinate (x) at (1,1);
        \coordinate (y) at (1,0);
        \mytriangle{(0,0)}{90}{fill=gray}
        \draw[fill opacity=0.4, fill=black] (a)--(x)--(b)--cycle;
        \draw[fill opacity=0.4, pattern=north west lines] (a)--(y)--(b)--cycle;
        \node at (x) {};
        \node at (y) {};
      \end{scope}
      \begin{scope}[shift={(8,1)}]
        \coordinate (a) at (0.86603,-0.5);
        \coordinate (b) at (-0.86603,-0.5);
        \coordinate (c) at (0,1);
        \coordinate (x) at (0,0);
        \draw[fill opacity=0.4, fill=gray] (a)--(x)--(b)--cycle;
        \draw[fill opacity=0.4, fill=black] (a)--(x)--(c)--cycle;
        \draw[fill opacity=0.4, pattern=north west lines] (b)--(x)--(c)--cycle;
        \node at (a) {};
        \node at (b) {};
        \node at (c) {};
        \node at (x) {};
      \end{scope}
      \begin{scope}[shift={(0,-2)}]
        \mytriangle{(0,0)}{0}{fill=gray}
        \mytriangle{(1,0)}{0}{fill=black}
        \mytriangle{(1,0)}{60}{pattern=north west lines}
        \mytriangle{(0,0)}{60}{pattern=crosshatch dots}
      \end{scope}
      \begin{scope}[shift={(3,-2)}]
        \mytriangle{(0,0)}{60}{fill=gray}
        \mytriangle{(0,0)}{0}{fill=black}
        \mytriangle{(1,0)}{60}{pattern=north west lines}
        \coordinate (x) at (1,1.73205);
        \draw[fill opacity=0.4,pattern=crosshatch dots] (x)--(c)--(b)--cycle;
        \node at (x) {};
      \end{scope}
      \begin{scope}[shift={(5,-2)}]
        \mytriangle{(0,0)}{0}{fill=gray}
        \mytriangle{(1,0)}{0}{fill=black}
        \mytriangle{(1,0)}{60}{pattern=north west lines}
        \mytriangle{(b)}{120}{pattern=crosshatch dots}
      \end{scope}
      \begin{scope}[shift={(8,-2)}]
        \mytriangle{(0,0)}{60}{fill=gray}
        \coordinate (x) at (-0.5,1.73205);
        \coordinate (y) at (0.5,1.73205);
        \draw[fill opacity=0.4,pattern=north west lines] (x)--(b)--(c)--cycle;
        \draw[fill opacity=0.4,pattern=crosshatch dots] (y)--(b)--(c)--cycle;
        \node at (x) {};
        \node at (y) {};
        \mytriangle{(0,0)}{0}{fill=black}
      \end{scope}
      \begin{scope}[shift={(0.75,-4)}]
        \coordinate (a) at (0,0);
        \coordinate (b) at (0,1);
        \coordinate (x1) at (-1,0);
        \coordinate (x2) at (1,0);
        \coordinate (y1) at (-1,1);
        \coordinate (y2) at (1,1);
        \draw[fill opacity=0.4,fill=gray] (a)--(x1)--(b)--cycle;
        \draw[fill opacity=0.4,pattern=north west lines] (a)--(x2)--(b)--cycle;
        \draw[fill opacity=0.4,fill=black] (a)--(y1)--(b)--cycle;
        \draw[fill opacity=0.4,pattern=crosshatch dots] (a)--(y2)--(b)--cycle;
        \node at (a) {};
        \node at (b) {};
        \node at (x1) {};
        \node at (x2) {};
        \node at (y1) {};
        \node at (y2) {};
      \end{scope}
      \begin{scope}[shift={(3.5,-3.5)}]
        \coordinate (a) at (0.86603,-0.5);
        \coordinate (b) at (-0.86603,-0.5);
        \coordinate (c) at (0,1);
        \coordinate (x) at (0,0);
        \draw[fill opacity=0.4, fill=gray] (a)--(x)--(b)--cycle;
        \draw[fill opacity=0.4, fill=black] (a)--(x)--(c)--cycle;
        \draw[fill opacity=0.4, pattern=north west lines] (b)--(x)--(c)--cycle;
        \draw[fill opacity=0.4, pattern=crosshatch dots] (a)--(b)--(c)--cycle;
        \node at (a) {};
        \node at (b) {};
        \node at (c) {};
        \node at (x) {};
      \end{scope}
      \begin{scope}[shift={(0.75,-5.5)}]
        \coordinate (a) at (0.86603,-0.5);
        \coordinate (b) at (-0.86603,-0.5);
        \coordinate (c) at (0,1);
        \coordinate (d) at (0.86603,0.5);
        \coordinate (x) at (0,0);
        \draw[fill opacity=0.4, fill=gray] (a)--(x)--(b)--cycle;
        \draw[fill opacity=0.4, fill=black] (a)--(x)--(c)--cycle;
        \draw[fill opacity=0.4, pattern=north west lines] (b)--(x)--(c)--cycle;
        \draw[fill opacity=0.4, pattern=crosshatch dots] (x)--(d)--(c)--cycle;
        \node at (a) {};
        \node at (b) {};
        \node at (c) {};
        \node at (d) {};
        \node at (x) {};
      \end{scope}
      \begin{scope}[shift={(5.15,-6)}]
         \coordinate (a) at (0,0.951);
         \coordinate (b) at (0.31,0);
         \coordinate (c) at (1.31,0);
         \coordinate (d) at (1.62,0.951);
         \coordinate (x) at (0.81,1.694);
         \draw[fill opacity=0.4, fill=gray] (a)--(b)--(c)--cycle;
         \draw[fill opacity=0.4, fill=black] (d)--(b)--(c)--cycle;
         \draw[fill opacity=0.4, pattern=crosshatch dots] (a)--(b)--(x)--cycle;
         \draw[fill opacity=0.4, pattern=north west lines] (c)--(d)--(x)--cycle;
         \node at (a) {};
         \node at (b) {};
         \node at (c) {};
         \node at (d) {};
         \node at (x) {};
      \end{scope}
      \begin{scope}[shift={(3.5,-5.5)}]
        \coordinate (a) at (0.86603,-0.5);
        \coordinate (b) at (-0.86603,-0.5);
        \coordinate (c) at (0,1);
        \coordinate (d) at (0.86603,0.5);
        \coordinate (x) at (0,0);
        \draw[fill opacity=0.4, fill=gray] (a)--(x)--(b)--cycle;
        \draw[fill opacity=0.4, fill=black] (a)--(x)--(c)--cycle;
        \draw[fill opacity=0.4, pattern=north west lines] (b)--(x)--(c)--cycle;
        \draw[fill opacity=0.4, pattern=crosshatch dots] (a)--(d)--(c)--cycle;
        \node at (a) {};
        \node at (b) {};
        \node at (c) {};
        \node at (d) {};
        \node at (x) {};
      \end{scope}
      \begin{scope}[shift={(7.5,-6)}]
        \coordinate (a) at (0,0);
        \coordinate (b) at (1,0);
        \coordinate (c) at (1,1);
        \coordinate (d) at (0,1);
        \coordinate (x) at (0.5,0.5);
        \draw[fill opacity=0.4,fill=gray] (a)--(x)--(b)--cycle;
        \draw[fill opacity=0.4,pattern=north west lines] (a)--(x)--(d)--cycle;
        \draw[fill opacity=0.4,fill=black] (b)--(x)--(c)--cycle;
        \draw[fill opacity=0.4,pattern=crosshatch dots] (d)--(x)--(c)--cycle;
        \node at (a) {};
        \node at (b) {};
        \node at (c) {};
        \node at (d) {};
        \node at (x) {};
      \end{scope}
   \end{tikzpicture}
    \caption{The irreducible $\{K_3\}$-complexes of size at most four. The four
    complexes in the bottom row are negligible when
    $p=o(n^{-7/11})$.}\label{fig:k3}
  \end{figure}
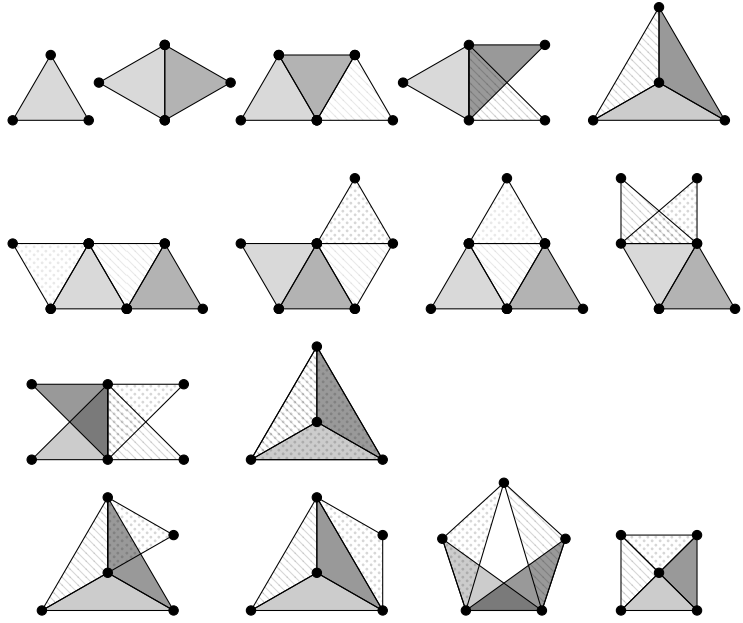

  By Lemma~\ref{lemma:iso}, the contribution to $\kappa_k$ from the isomorphism type of an $\mathcal F$-complex $B$ of size $k$ is
  \[
    \kappa(B)\cdot\frac{n^{\ul{v_{G_B}}}}{|\Aut(B)|}.
  \]
  For the complexes shown in Figure~\ref{fig:k3}, it is not too difficult to calculate $|\Aut(B)$| by hand. In fact, since the automorphism group
  of $K_3$ comprises all $3!$ permutations of $V(K_3)$, automorphisms of $\{K_3\}$-complexes are simply automorphisms of the $3$-uniform hypergraphs
  involved\footnote{But for general $\mathcal F$, it is wrong to think of an $\mathcal F$-complex isomorphism as a hypergraph isomorphism.}.
  For example, the leftmost $\mathcal F$-complex in the second row has exactly two
  automorphisms: the trivial one, and the unique automorphism exchanging the vertices belonging to exactly one triangle.
  Under our assumptions on $p$, we have $\kappa_k = \Delta_k + o(1)$ for $k \in \{3, 4\}$. This is the case because $|\kappa_k - \Delta_k| = O(p\Delta_k)$
  and
  \[
    p\Delta_3 \leq O(n^5p^8 + n^4p^7) = o(1) \qquad \text{and} \qquad p\Delta_4 \leq p \cdot O(1+p+p^2n) \cdot \Delta_3 = o(1),
  \]
  as can be seen by looking at Figure~\ref{fig:k3}.
  
  Now we just work through the figure row by row (from the top left to the bottom right) and in this order, we compute (using the first row)
  \begin{align*}
    &\kappa_1 = \frac{n^{\ul 3}p^3}{6},\\
    &\kappa_2 = \frac{n^{\ul 4}(p^5-p^6)}{4},\\
    &\kappa_3 = \Delta_3 + o(1) = \frac{n^{\ul 5}p^7}{2} + \frac{n^{\ul 5}p^7}{12} + \frac{n^{\ul
    4}p^6}{6} + o(1),\\
    \intertext{and (using the other rows)}
    &\kappa_4 = \Delta_4 + o(1) = \frac{n^{\ul 6}p^9}{2} + \frac{n^{\ul 6}p^9}{2}
  + \frac{n^{\ul 6}p^9}{6} + \frac{n^{\ul 6}p^9}{2}
  + \frac{n^{\ul 6}p^9}{48} + \frac{n^{\ul 4}p^6}{24}\\
    &\hspace{7.85cm}+ O(n^5p^8) + o(1).
  \end{align*}
  The term $O(n^5p^8)$ represents the contribution of the four complexes in the bottom row of Figure~\ref{fig:k3}, which is
  $o(1)$, as $p = o(n^{-7/11})$. Finally, we have
  \[
    \Delta_5 = O(p\Delta_4 + np^2\Delta_4 + n^5p^8 +n^5p^9) = O(n^4p^7 + n^5p^8 + n^6p^{10} + n^7p^{11}) = o(1),
  \]
  since the graph of an $\mathcal F$-complex of size five must be obtained by adding either a new edge or a new vertex and two new
  edges to one of the graphs in Figure~\ref{fig:k3}, or else it must be isomorphic to one of the first three graphs
  in the bottom row of Figure~\ref{fig:k3} (as the graphs of the remaining complexes of size four contain only triangles that are already in the complex).

  Finally, $\kappa_1 = n^{\ul 3}p^3/6 = (n^3-3n^2)p^3/6 + o(1)$ and, since $\max\{\kappa_2, \kappa_3, \kappa_4\} = o(n)$,
  we may replace the falling factorials $n^{\ul i}$ in the remaining expressions by $n^i$.
  Adding up the terms in $-\kappa_1 + \kappa_2 - \kappa_3 + \kappa_4$, we obtain that the
  probability that $G_{n,p}$ with $p = o(n^{-7/11})$ is triangle-free is asymptotically
  \[
    \exp\Big(- \frac{n^3p^3}{6} + \frac{n^4p^5}{4}
    - \frac{7n^5p^7}{12} 
    +\frac{n^2p^3}{2} - \frac{3n^4p^6}{8}
    + \frac{27n^6p^9}{16} \Big),
  \]
  as claimed.
\end{proof}

\subsection{Corollary~\ref{cor:3ap}} It only remains
to prove 
Corollary~\ref{cor:3ap}.

\begin{proof}[Proof of Corollary \ref{cor:3ap}]
  Let $\Gamma$ be the hypergraph of $r$-APs in $[n]$,
  as defined in Section~\ref{ssec:aps}, 
  and assume that $p =
  o(n^{-4/7})$. Then by Corollary~\ref{cor:aps} with $r=3$ and $k=2$,
  \[
    \Pr[X=0] = \exp\big(-\kappa_1+\kappa_2+O(\Delta_3)+o(1)\big).
  \]
  It remains to calculate $\kappa_1$, $\kappa_2$, and $\Delta_3$. For  $i\in[n]$, the number of $3$-APs containing $i$ is
  \[
    f(i) = \frac{n}{2} + \min{\{i,n-i\}}+O(1),
  \]
  where $\min{\{i,n-i\}}$ counts the $3$-APs that have $i$ as their midpoint,
  and $n/2$ counts the others. Thus the total number of $3$-APs in $[n]$ is
  \[
    \frac13\sum_{i=1}^n f(i) = \frac{n^2}{4} + O(n),
  \]
  and therefore (using $np^3=o(1)$)
  \[
    \kappa_1 = \frac{n^2p^3}4 + o(1).
  \]
  If $\{i,j\}$ is an edge in the dependency graph, then $|\gamma_i\cap\gamma_j|$ is either $1$ or $2$.
  The number of pairs $\gamma_i,\gamma_j$ intersecting in two elements is at most $\binom{n}{2}\binom{3}{2}^2$,
  so the contribution of these pairs to $\kappa_2$ is $O(n^2p^4)$, which is $o(1)$ by our assumption on $p$.
  The number of pairs $\{\gamma_i,\gamma_j\}$ with $i \neq j$ and $|\gamma_i\cap\gamma_j| \geq 1$ is precisely
  $\sum_{i=1}^n \binom{f(i)}{2}$ and hence the number $M$ of pairs with $|\gamma_i \cap \gamma_j| = 1$ satisfies
  \[
    M = \sum_{i=1}^n \binom{f(i)}{2} + O(n^2) = \frac12\sum_{i=1}^n f(i)^2 + O(n^2).
  \]
  Since
  \[
    \begin{split}
      \sum_{i=1}^n f(i)^2 & = \sum_{i=1}^n \big(n/2 + \min{\{i,n-i\}}\big)^2 + O(n^2) = 2 \sum_{i=1}^{\lfloor n/2 \rfloor} (n/2+i)^2 + O(n^2) \\
      & = 2\left( \frac{n^3}{3} - \frac{(n/2)^3}{3} \right) + O(n^2) = \frac{7n^3}{12} + O(n^2)
    \end{split} 
  \]
  and $n^2p^4 = o(1)$, we have
  \[ 
    \kappa_2 = M(p^5-p^6) + O\big(n^2(p^4-p^6)\big) = \frac{7n^3p^5}{24} + o(1).
  \]
  Lastly, we claim that $\Delta_3 = O(n^4p^7) = o(1)$.
  Since any two distinct numbers are contained in at most three $3$-APs,
  we have $|\mathcal C_3|= O(n^4)$.
  Moreover, let $\mathcal
  C_3^*$ be the family of all $\{i, j, k\} \in \mathcal C_3$ such that
  $|\gamma_i \cup \gamma_j \cup \gamma_k| < 7$. A simple case analysis shows that
  \[
    \sum_{V\in \mathcal C_3^*} \Delta(\{X_i: i\in V\}) = O(n^2p^5 + n^3p^6) = o(1).
  \]
  On the other hand, $\Delta(\{X_i: i\in V\}) = p^7$ for every $V \in \mathcal C_3 \setminus \mathcal C_3^*$. Thus,
  \[
    \Delta_3 \leq |\mathcal C_3| p^7 + \sum_{V\in \mathcal C_3^*} \Delta(\{X_i: i\in V\})
     = O(n^4p^7 + n^2p^4 + n^3p^6) = o(1)
  \]
  and we conclude that the probability that $[n]_p$ is $3$-AP-free is asymptotically
  \[
    \exp\Big(- \frac{n^2p^3}{4} + \frac{7n^3p^5}{24}\Big),
  \]
  as claimed.
\end{proof}

\paragraph{Acknowledgement}
This project was started during the workshop of the research group of Angelika
Steger in Buchboden in February 2014. We are grateful to the anonymous referee
for their careful reading of this paper and many helpful suggestions; in
particular, for pointing out a mistake in an earlier version of the paper.

\bibliographystyle{plain}
\bibliography{janson}

\begin{thebibliography}{10}

\bibitem{AhUsPi}
C.~Ahlbach, J.~Usatine, and N.~Pippenger.
\newblock A combinatorial interpretation of the joint cumulant.
\newblock arXiv:1211.0652 [math.CO].

\bibitem{AlSp16}
N.~Alon and J.~H. Spencer.
\newblock {\em The Probabilistic Method}.
\newblock Wiley Series in Discrete Mathematics and Optimization. John Wiley \&
  Sons, Inc., Hoboken, NJ, fourth edition, 2016.

\bibitem{BaMoSa15}
J.~Balogh, R.~Morris, and W.~Samotij.
\newblock Independent sets in hypergraphs.
\newblock {\em J. Amer. Math. Soc.}, 28:669--709, 2015.

\bibitem{BaMoSaWa16}
J.~Balogh, R.~Morris, W.~Samotij, and L.~Warnke.
\newblock The typical structure of sparse {$K_{r+1}$}-free graphs.
\newblock {\em Trans. Amer. Math. Soc.}, 368:6439--6485, 2016.

\bibitem{Bo81}
B.~Bollob\'as.
\newblock Threshold functions for small subgraphs.
\newblock {\em Math. Proc. Cambridge Philos. Soc.}, 90:197--206, 1981.

\bibitem{Bo88}
B.~Bollob\'as.
\newblock The chromatic number of random graphs.
\newblock {\em Combinatorica}, 8:49--55, 1988.

\bibitem{BoSp89}
R.~Boppana and J.~Spencer.
\newblock A useful elementary correlation inequality.
\newblock {\em J. Combin. Theory Ser. A}, 50:305--307, 1989.

\bibitem{ErRe60}
P.~Erd\H{o}s and A.~R\'enyi.
\newblock On the evolution of random graphs.
\newblock {\em Magyar Tud. Akad. Mat. Kutat\'o Int. K\"ozl.}, 5:17--61, 1960.

\bibitem{Fr92}
A.~Frieze.
\newblock On small subgraphs of random graphs.
\newblock In {\em Random graphs ({P}ozna\'n, 1989)}, volume~2, pages 67--90.
  Wiley, New York, 1992.

\bibitem{Ha60}
T.~E. Harris.
\newblock A lower bound for the critical probability in a certain percolation
  process.
\newblock {\em Proc. Cambridge Philos. Soc.}, 56:13--20, 1960.

\bibitem{HuPrSt93}
C.~Hundack, H.~J. Pr\"omel, and A.~Steger.
\newblock Extremal graph problems for graphs with a color-critical vertex.
\newblock {\em Combin. Probab. Comput.}, 2:465--477, 1993.

\bibitem{Ja90}
S.~Janson.
\newblock Poisson approximation for large deviations.
\newblock {\em Random Structures Algorithms}, 1:221--229, 1990.

\bibitem{Ja98}
S.~Janson.
\newblock New versions of {S}uen's correlation inequality.
\newblock In {\em Proceedings of the {E}ighth {I}nternational {C}onference on
  {R}andom {S}tructures and {A}lgorithms ({P}oznan, 1997)}, volume~13, pages
  467--483, 1998.

\bibitem{JaLuRu90}
S.~Janson, T.~{\L}uczak, and A.~Ruci\'nski.
\newblock An exponential bound for the probability of nonexistence of a
  specified subgraph in a random graph.
\newblock In {\em Random graphs '87 ({P}ozna\'n, 1987)}, pages 73--87. Wiley,
  Chichester, 1990.

\bibitem{JaLuRu00}
S.~Janson, T.~{\L}uczak, and A.~Rucinski.
\newblock {\em Random graphs}.
\newblock Wiley-Interscience Series in Discrete Mathematics and Optimization.
  Wiley-Interscience, New York, 2000.

\bibitem{KaRu83}
M.~Karo\'nski and A.~Ruci\'nski.
\newblock On the number of strictly balanced subgraphs of a random graph.
\newblock In {\em Graph theory (\L ag\'ow, 1981)}, volume 1018 of {\em Lecture
  Notes in Math.}, pages 79--83. Springer, Berlin, 1983.

\bibitem{OsPrTa03}
D.~Osthus, H.~J. Pr\"omel, and A.~Taraz.
\newblock For which densities are random triangle-free graphs almost surely
  bipartite?
\newblock {\em Combinatorica}, 23:105--150, 2003.
\newblock Paul Erd\H os and his mathematics (Budapest, 1999).

\bibitem{PrSt92}
H.~J. Pr{\"o}mel and A.~Steger.
\newblock The asymptotic number of graphs not containing a fixed color-critical
  subgraph.
\newblock {\em Combinatorica}, 12:463--473, 1992.

\bibitem{promel1996asymptotic}
Hans~J{\"u}rgen Pr{\"o}mel and Angelika Steger.
\newblock On the asymptotic structure of sparse triangle free graphs.
\newblock {\em J. Graph Theory}, 21(2):137--151, 1996.

\bibitem{RiWa15}
O.~Riordan and L.~Warnke.
\newblock The {J}anson inequalities for general up-sets.
\newblock {\em Random Structures Algorithms}, 46:391--395, 2015.

\bibitem{SaTh15}
D.~Saxton and A.~Thomason.
\newblock Hypergraph containers.
\newblock {\em Invent. Math.}, 201:925--992, 2015.

\bibitem{Sc79}
K.~Sch\"urger.
\newblock Limit theorems for complete subgraphs of random graphs.
\newblock {\em Period. Math. Hungar.}, 10:47--53, 1979.

\bibitem{StWo}
D.~Stark and N.~Wormald.
\newblock The probability of nonexistence of a subgraph in a moderately sparse
  random graph.
\newblock {\em Combin. Probab. Comput}, 27(4):672--715, 2018.

\bibitem{Su90}
W.-C.~S. Suen.
\newblock A correlation inequality and a {P}oisson limit theorem for
  nonoverlapping balanced subgraphs of a random graph.
\newblock {\em Random Structures Algorithms}, 1:231--242, 1990.

\bibitem{Wo96}
N.~C. Wormald.
\newblock The perturbation method and triangle-free random graphs.
\newblock In {\em Proceedings of the {S}eventh {I}nternational {C}onference on
  {R}andom {S}tructures and {A}lgorithms ({A}tlanta, {GA}, 1995)}, volume~9,
  1996.

\end{thebibliography}

\end{document}